      \theoremstyle{plain}
      \newtheorem{assumption}{Assumption}
\crefname{hypothesis}{Hypothesis}{Hypotheses}
\Crefname{ALC@unique}{Line}{Lines}
\colorlet{texcscolor}{blue!50!black}
\colorlet{texemcolor}{red!70!black}
\colorlet{texpreamble}{red!70!black}
\colorlet{codebackground}{black!25!white!25}
\newcommand{\be}{\begin{equation}}
    \newcommand{\ee}{\end{equation}}
\newcommand{\bee}{\begin{equation*}}
    \newcommand{\eee}{\end{equation*}}
\newcommand{\bea}{\begin{eqnarray}}
    \newcommand{\eea}{\end{eqnarray}}
\newcommand{\beaa}{\begin{eqnarray*}}
    \newcommand{\eeaa}{\end{eqnarray*}}
\newcommand{\st}{\,\textrm{s.t.}\,}
\newcommand{\dist}{\mathrm{dist}}
\newcommand{\iprod}[2]{\left \langle #1, #2 \right \rangle }
\lstdefinestyle{siamlatex}{%
  style=tcblatex,
  texcsstyle=*\color{texcscolor},
  texcsstyle=[2]\color{texemcolor},
  keywordstyle=[2]\color{texemcolor},
  moretexcs={cref,Cref,maketitle,mathcal,text,headers,email,url},
}
\DeclareTotalTCBox{\code}{ v O{} }
{ 
  fontupper=\ttfamily\color{black},
  nobeforeafter,
  tcbox raise base,
  colback=codebackground,colframe=white,
  top=0pt,bottom=0pt,left=0mm,right=0mm,
  leftrule=0pt,rightrule=0pt,toprule=0mm,bottomrule=0mm,
  boxsep=0.5mm,
  #2}{#1}
\patchcmd\newpage{\vfil}{}{}{}
	\title{A Stochastic Composite Augmented Lagrangian Method For Reinforcement Learning\thanks{Submitted to the editors DATE.
			\funding{Research supported in part by the NSFC grant 11831002 and by Beijing Academy of Artificial Intelligence (BAAI).}}}
	\author{Yongfeng Li\thanks{Beijing International Center for Mathematical Research, Peking University, CHINA (\email{yongfengli@pku.edu.cn}).}
	\and Mingming Zhao\thanks{Beijing International Center for Mathematical Research, Peking University, CHINA (\email{mmz102@pku.edu.cn}).}
	\and Weijie Chen\thanks{Academy for Advanced Interdisciplinary Studies, Peking University, CHINA(\email{baoz@pku.edu.cn}).}
\and Zaiwen Wen\thanks{Corresponding author. Beijing International Center for Mathematical Research, Peking University,CHINA}(\email{wenzw@pku.eu.cn}).}
\begin{document}
	\maketitle
	
\begin{abstract}
In this paper, we consider the linear programming (LP) formulation for deep reinforcement learning. The number of the constraints depends on the size of state and action spaces, which makes the problem intractable in large or continuous environments. The general augmented Lagrangian method suffers the double-sampling obstacle in solving the LP. Namely, the conditional expectations originated from the constraint functions and the quadratic penalties in the augmented Lagrangian function impose difficulties in sampling and evaluation. Motivated from the updates of the multipliers, we overcome the obstacles in minimizing the augmented Lagrangian function by replacing the intractable conditional expectations with the multipliers. Therefore, a deep parameterized augment Lagrangian method is proposed. Furthermore, the replacement provides a promising breakthrough to integrate the two steps in the augmented Lagrangian method into a single constrained problem. A general theoretical analysis shows that the solutions generated from a sequence of the  constrained optimizations converge to the optimal solution of the LP if the error is controlled properly.  A theoretical analysis on the quadratic penalty algorithm under neural tangent kernel setting shows the residual can be arbitrarily small if the parameter in network and optimization algorithm is chosen suitably.  Preliminary experiments illustrate that our method is competitive to other state-of-the-art algorithms.

\end{abstract}

\begin{keywords}
Deep reinforcement learning; Augmented Lagrangian method; Linear programming; Off-policy learning
\end{keywords}

\begin{AMS}
49L20, 90C15, 90C26, 90C40, 93E20
\end{AMS}
\section{Introduction}
With the recent development and empirical successes of the neural networks, deep reinforcement learning has been resoundingly applied in many industrial fields, such robotics and control, economics, automatic driving, etc. The focus of reinforcement learning (RL) is solving a Markov decision process (MDP) which models the framework for interactions between the agent and the environment. In real-world problems, the issues such as high dimensional or/and continuous state and action spaces, partially observed state informations, limited and noised samples, unknown dynamics and etc., make RL significantly harder than general deep learning tasks. 

A fundamental conclusion in RL is that the Bellman operator is contractive in value function space and the optimal value function is the unique fixed point of the Bellman operation \cite{dai2018sbeed}. It is not easy to solve the Bellman equation greedily in large state and action spaces or when the function parameterization is used because of the non-smoothness and highly non-convexity. Instead of formulating such a fixed point iteration, value-based RL algorithms concentrate on minimizing the Bellman residual, which is a square loss in general, to approximate the optimal value function with stochastic gradient descent methods. However, there are still two troublesome issues in this alternative formulation. One is the conditional expectation over the next state suffers from the double-sampling obstacle in gradient formulation. In other words, the objective function is a square of a conditional expectation of value functions, thus to construct the unbiased gradient estimation in a stochastic behavior, two independent sets of samples of the next transition state are required for each pair of state and action. However, only one single transition state sample is observed in sampling cases or model-free setting. This obstacle has oppressed the designs of RL algorithms in practice. The other is that the $\max$ operator in the optimal Bellman equation exposes the non-smoothness and difficulty of the optimization in complex environments. To overcome the former challenge, several methods are proposed in recent researches. Wang et al., \cite{wang2017stochastic} takes an additional variable to track the inner expectation and proposes a class of stochastic compositional gradient descent algorithms by using noisy sample gradients. Borrowing extra randomness from the future is proposed in \cite{zhu2020borrowing}. Conditioned on the assumption that the transition kernel can be described with a stochastic differential equation, they propose two algorithms in which the training trajectories are shown to be close to the one of the unbiased SGD. Deep Q-Network (DQN) \cite{mnih2015human,hessel2018rainbow} concentrates on the Bellman equation of the state and action value function and predicts the value for the next state with the target parameter which is periodically synchronized to the training parameters by minimizing the Bellman residual. Under the off-policy setting, DQN performs SGD and achieves more than human players level in Atari Games \cite{bellemare2013arcade,brockman2016openai}. As for the second problem that the $\max$ operator in Bellman equation may be unavailable or expensive, an additional actor network is introduced in \cite{fujimoto2018addressing,lillicrap2015continuous} to adapt to large or/and continuous environments. By adding the scaled log-policy to the one-step reward in \cite{vieillard2020munchausen}, the $\max$ operator is slightly converted into a soft formulation. A smoothed Bellman equation is introduced in \cite{dai2018sbeed} and it is reformulated as a novel primal-dual optimization problem. 

The optimal Bellman equation can be interpreted as a linear programming (LP) problem with respect to the state value function \cite{bertsekas1995dynamic,puterman2014markov}. The uniqueness of the fixed point of the Bellman operator guarantees the existence of the solution of the LP problem. The dual form of the LP problem can also be viewed as an interpretation of maximizing the cumulative reward in policy space. As the transition kernel of the MDP may be unavailable, such as in model-free setting, the dual problem is rarely considered since its constraint functions are hard to estimate. 
Consequently, the equivalency between the primal and dual LP problems, as well as their relationship to the optimal policy, provide a broad breakthrough for the design of RL algorithm.
By formulating the primal and dual LPs into an equivalent saddle-point problem, a stochastic primal-dual algorithm is proposed in \cite{wang2017randomized} to find a near optimal policy in tabular case. For large or even continuous state and action spaces, the bilinear structure of the Lagrangian function is destroyed because of the parameterization, the general convergence result for the parametrized saddle-point problem no longer holds due to the lack of convex-concavity. Furthermore, the large bias of sample estimations and under-fitted functions bring great disturbance to the stability of numerical performance.  To promote the local convexity and duality, a path regularization is added to the objective function in \cite{dai2017boosting} to restrict the feasible domain of the value function to be a ball guided by the behavior policy, and the general Bellman residual is extended to the multi-step version for a better bias-variance trade off. For sample efficiency, the authors exploit stochastic dual ascent method and solve a sequence of partial Lagrangian functions to update the value function for each policy. To overcome the path dependency and time consuming in \cite{dai2017boosting}, the path regularization is replaced with a quadratic penalty term with respect to the Bellman residual in \cite{cho2017deep} and a sequence of stochastic gradient updates for primal and dual variables are performed alternately.

In this work, we propose a novel stochastic optimization method for the primal LP problem derived from the algorithmic framework of the augmented Lagrangian method (ALM). The optimal policy can be recovered from a specific relationship with the multipliers \cite{cho2017deep,dai2017boosting}. Specifically, a weighted augmented Lagrangian function rather than the standard form  is constructed based on a weighted summation of the quadratic penalty terms defined by the multipliers and the constraints. The weights are functions on state and action spaces. They enable the sampling process and are not required in practice. 
As the constraint functions include conditional expectations, the quadratic terms induce a challenge to the calculation of the weighted augmented Lagrangian function, as well as a double sampling obstacle in its gradient estimation. 
Actually, it is claimed in the appendix of \cite{dai2017boosting} that minimizing the augmented Lagrangian function itself is quite difficult. 

Our main contributions are as follows:
\begin{itemize}
\item Motivated by the updates of the multipliers in ALM, we overcome these issues by replacing the intractable conditional expectations in the weighted augmented Lagrangian function with the multipliers. The replacement gets rid of the those difficulties without modifying the one-step reward, introducing smoothed Bellman equation and extra computations as in \cite{dai2018sbeed,vieillard2020munchausen,zhu2020borrowing}.
More crucially, it motivates a promising breakthrough to equivalently reformulate the two steps in ALM into a single constrained optimization problem. For practical consideration, we construct a quadratic penalty function for the constrained problem and employ the semi-gradient update with a target network for primal variables. Afterwards, a practical and stochastic optimization method, dubbed as SCAL, is developed. 
\item We establish theoretical analysis for the proposed algorithm.
The algorithm can be regarded as an inexact augmented Lagrangian method.
The error is generated from two parts: minimizing the augmented Lagrangian function and the update of multiplier. If these two error terms can be controlled properly, the algorithm converges to the optimal solution of the LP problems.
Then we present a specific analysis under neural tangent kernel (NTK) setting, where  the variables  behave as linear functions, so that the inner loop is similar to the optimization of convex functions. It gives a bound of the error between an outer iteration of our algorithm and an exact augmented Lagrangian update. Furthermore, we get that the residual can be arbitrarily small if the network is sufficiently wide and the parameters in algorithm is chosen properly.

\item To demonstrate the learning capability of SCAL, we take an ablation study by investigating the effects of the double-sampling solution on gradient variance and varying the number of rollout steps, and perform comprehensive numerical experiments. The results imply that our algorithm has a competitive learning potential. 

\end{itemize}



The paper is organized as follows. In section \ref{Preliminaries}, we clarify the LP formulations in RL, then introduce the ALM framework. In section \ref{Deep Parameterized ALM}, we analyze the difficulties in evaluating the parameterized augmented Lagrangian function and its gradient, and present an optimization algorithm which is equivalent to ALM but easier to calculate. We further propose a single constrained optimization problem in section \ref{The Augmented Lagrangian Method} to integrate the two steps in ALM. Some theoretical conclusions are introduced in section \ref{Theoretical Analysis}. The implementation details are explained in section \ref{implementation matters}. Comprehensive numerical experiments with several state-of-the-art RL algorithms are provided in section \ref{Numerical Experiments}. 

\section{Preliminaries}\label{Preliminaries}
We consider the standard reinforcement learning setting characterized by an infinite-horizon discounted MDP. Formally, a MDP is described by the tuple $(\mathcal{S},\mathcal{A},P,r,\rho_0, \gamma)$. We assume that $\mathcal{S}$ and $\mathcal{A}$ represent the finite state and action spaces, respectively. $P: \mathcal{S}\times \mathcal{A}\times \mathcal{S}\rightarrow \mathbf{R}$ indicates the transition dynamics by a conditional distribution over the state and action spaces, $r: \mathcal{S} \times \mathcal{A} \rightarrow \mathbf{R}$ is the bounded reward function. The initial state $s_0$ of the MDP obeys the distribution $\rho_0: \mathcal{S}\rightarrow \mathbf{R}$, and $\gamma \in (0,1)$ is the discount factor.

\subsection{LP Formulation}
Generally, we denote the total expected reward as the expectation of the cumulative discounted reward of a trajectory induced by the policy $\pi$:
\begin{equation}\label{eq:etapi-form}
\eta(\pi)=\mathbb{E}_{\pi}\left[\sum_{t=0}^\infty\gamma^tr(s_t, a_t)\right],
\end{equation}
where $s_0 \sim \rho_0, a_t \sim \pi(\cdot|s_t), s_{t+1}\sim P(\cdot|s_t,a_t)$.
The optimization problem is 
\begin{equation}\label{etapi}
\max_{\pi\in\Pi}\quad\eta(\pi),
\end{equation}
where 
\be\notag
\Pi=\left\{\pi|\sum\limits_{a\in\mathcal{A}}\pi(a|s)=1,\pi(a|s)\geq0,\forall a\in\mathcal{A}, s\in\mathcal{S}\right\}.
\ee 
The optimal state value function is the maximal expected cumulative discounted reward from state $s$, i.e., 
\be\label{value-func}
V^*(s)=\max_{\pi\in \Pi}\mathbb{E}_{\pi}\left[\sum_{t=0}^\infty\gamma^tr(s_t, a_t)\Big|s_0=s\right].
\ee 
Respectively, the optimal Bellman equation states that
\be\label{optimal-bellman}
V^*(s) =(\mathcal{T}V^*)(s) =  \max\limits_{a}\{r(s,a)+  \gamma \mathbb{E}_{s'|s,a}[ V^*(s')]\},\forall s,
\ee
which is a fixed point equation of $V^*(s)$. As the Bellman operator $\mathcal{T}$ is contractive, 
the state value function $V^*(s)$ in \eqref{value-func} is the unique solution of \eqref{optimal-bellman}. Furthermore, the optimal Bellman equation can be interpreted as solving the LP problem \cite{bertsekas1995dynamic,puterman2014markov}:
\begin{equation}\label{eq:rllp-primal}
\begin{split}
\mathrm{(P):}\quad\min_V& \sum_{s} \rho_0(s)V(s), \\
\st & V(s)\geq  r(s,a)+  \gamma \mathbb{E}_{s'|s,a}[ V(s')] , \forall s, a.
\end{split}
\end{equation}
The optimal policy $\pi^*$ can be obtained from the solution of \eqref{eq:rllp-primal} via
\be\label{Uandpi}
\pi^*(a|s) = 
\begin{cases}
1 & a = \arg\max_{a'} r(s,a')+\gamma \mathbb{E}_{s'|s,a}[ V^*(s')], \\
0 & \mathrm{o.w}.
\end{cases}
\ee
Actually, any policy that assigns positive probability only to the maximizers (i.e., actions) of the right side at state $s$ can be regarded as an optimal policy $\pi^*$. We just denote the deterministic case \eqref{Uandpi} for simplicity. In fact, a closer relation to the optimal policy $\pi^*$ comes from an equivalent form of the dual of the LP problem
\begin{equation}\label{eq:rllp-dual}
\begin{split}
\mathrm{(D):}\quad\max_{x\geq0}& \sum_{s,a} r(s,a)w(s,a)x(s,a), \\
\st& \sum_{a}w(s',a)x(s',a)- \sum_{s,a} \gamma P(s'|s,a) w(s,a)x(s,a) = \rho_0(s'), \forall s',
\end{split}
\end{equation}
where the weight function $w$ is a distribution with respect to the state and action spaces, i.e., 
\bee
\sum\limits_{s\in\mathcal{S},a\in\mathcal{A}}w(s,a)=1.
\eee
The only difference from the standard dual formulation is that the variables $x(s,a)$ carries the weight $w(s,a)$. Actually, $x$ can be viewed as the multipliers using a weighted inner product other than the standard case in Euclidean space. Moreover, the weight function $w(s,a)$ enables us to perform a sampling process from a replay buffer in large and complex environments. In practice, the explicit form of $w(s,a)$ is not required as it simply characterizes the sampling behavior.
It can be verified that \cite{cho2017deep,dai2017boosting} 
\be\label{fandpi}
\sum_{s,a}w(s,a)x^*(s,a)=\frac{1}{1-\gamma},\mathrm{\ and}\quad \pi^*(a|s) = \frac{w(s,a)x^*(s,a)}{\sum_{a'}w(s,a)x^*(s,a')},
\ee
where $x^*$ is the solution of \eqref{eq:rllp-dual}. 

Generally, the goal of RL is computing the fixed point of the optimal Bellman
equation or the maximizer of \eqref{etapi}. Alternatively, we can focus on the
LP problem \eqref{eq:rllp-primal} or \eqref{eq:rllp-dual} in an equivalent way.
However, as the transition probability $P(s'|s,a)$ is usually not accessible in real-world problems or model-free setting, the summation over $s,a$ in the constraint function of the dual form \eqref{eq:rllp-dual} limits the estimation from sampling process. Therefore, the primal LP problem \eqref{eq:rllp-primal} is considered in most cases. 

\subsection{The Classic ALM} \label{The Classic ALM}
To simplify the following derivation, we consider an equivalent primal LP formulation with equality constraints by introducing an additional slack variable. Namely, we reformulate \eqref{eq:rllp-primal} as 
\begin{equation}\label{eq:rllp-primal-eq}
\begin{split}
\min_{h\geq0,V}&\  \sum_{s} \rho_0(s)V(s), \\
\st & V(s)=  r(s,a)+  \gamma  \mathbb{E}_{s'|s,a}[ V(s')]+h(s,a), \forall s, a.
\end{split}
\end{equation}
If $a^*$ is the optimal action in state $s$, then the connection between the optimal policy and the optimal slack variable is, 
\be\label{handpi}
\begin{cases}
\pi^*(a|s)=1, h^*(s,a)=0, & a = a^*, \\
\pi^*(a|s)=0, h^*(s,a)>0, & a \neq a^*.
\end{cases}
\ee
We denote $x(s,a)$ as the multiplier of the constraints in \eqref{eq:rllp-primal-eq}. For simplicity, we introduce a $Z$-function as
\bee
Z_\mu(V,h,x,s,a) =  x(s,a)+  \mu \left(h(s,a)+r(s,a)+\gamma \mathbb{E}_{s'|s,a}[ V(s')] - V(s)\right),\label{zfunc}
\eee
and define
\bee
z_\mu(V,h,x,s,a,s') =  x(s,a)+  \mu \left(h(s,a)+r(s,a)+\gamma V(s') - V(s)\right),\label{zfunc1}
\eee
where $\mu>0$ is the penalty parameter. Since 
\bea\label{sumP=1}
\sum_{s'} P(s'|s,a) = 1,
\eea
we have
\bea\label{Z-z}
Z_\mu(V,h,x,s,a) = \mathbb{E}_{s'|s,a} [z_\mu(V,h,x,s,a,s')].
\eea
That is to say, $Z$-function is a conditional expectation of $s'$. For the LP problem \eqref{eq:rllp-primal-eq}, the weighted augmented Lagrangian function is defined as 
\begin{equation}\label{randALM}
\mathcal{L}_{\mu}(V,h,x) = \sum_{s} \rho_0(s)V(s) +\frac{1}{2\mu}\sum_{s,a}w(s,a)[Z_\mu(V,h,x,s,a)]^2.
\end{equation}

We use superscript $k$ for all variables to denote the iteration index. At the $k$-th iteration, ALM can be written as
\bea
h^{k+1},V^{k+1}&=& \arg\min_{h\geq0,V} \mathcal{L}_{\mu}(V,h,x^k), \label{eq:Vk}\\ 
x^{k+1}(s,a) &=& Z_\mu(V^{k+1},h^{k+1},x^k,s,a) ,\forall s,a. \label{eq:xk}
\eea
The general ALM performs these two steps repeatedly until some terminal conditions are satisfied. A general pseudo-code of ALM is given in Algorithm \ref{classicALMcode}.
\begin{algorithm}
		\caption{The Augmented Lagrangian Method}
		\label{classicALMcode}
		\begin{algorithmic}[1]	
		\STATE Choose initial points $V^0, h^0, x^0$, set $\mu,w$.
		\FOR{$k=1,2,...$}
			\STATE update $V^{k+1},h^{k+1}$ from the optimization \eqref{eq:Vk};	
			\STATE update the multiplier $x^{k+1}(s,a)$ for each $s,a$ with \eqref{eq:xk};
		\ENDFOR	
		\end{algorithmic}
	\end{algorithm}
Obviously, the variable size of the optimization \eqref{eq:Vk} is the same as that of the state space $\mathcal{S}$. Moreover, when the state and action spaces are extremely large or even continuous, the element-wise update in \eqref{eq:xk} is hard to compute. Thus, the general ALM is impractical due to the computational obstacle. 

\section{A Deep Parameterized ALM}\label{Deep Parameterized ALM}
In this part, we first consider a parameterized formulation to approximate the unaccessible functions in ALM and analyze its challenges in sampling and evaluation. Then we propose an approximate function which is easy to compute and present an algorithm for solving the deep parameterized ALM.
\subsection{Parameterization and Strategies for Double-Sampling}\label{A Strategy to Overcome Double-Sampling}
 We take the deep neural networks to approximate the value function, slack function and multipliers. Suppose that $V$, $h$ and $x$ are parameterized by $V_\phi$, $h_\psi$ and $x_\theta$, respectively. The non-negative property of $h_{\psi}$ is automatically guaranteed by the network. For simplicity, we directly replace $V_{\phi},h_{\psi},x_{\theta}$ in the following discussion by $\phi, \psi$ and $\theta$ without extra explanation. 
 
 Let the superscripts $\phi^k$, $\psi^k$ and $\theta^k$,  represent the parameters of the networks in the $k$-th iteration. 
Respectively, the weighted augmented Lagrangian function in \eqref{randALM} is denoted as $\mathcal{L}^k_{\mu}(\phi,\psi)$.
According to the definition, the second term in $\mathcal{L}^k_{\mu}(\phi,\psi)$ is an expectation of the squared $Z$-function over states and actions, and the $Z$-function itself is a conditional expectation over the next state. Thus, the computation of $\mathcal{L}^k_{\mu}(\phi,\psi)$ is impractical from sampling process. Furthermore, 
the gradient of $\mathcal{L}^k_{\mu}(\phi,\psi)$ with respect to $\phi$ satisfies
\be\label{grad-v}
\begin{split}
&\nabla_\phi \mathcal{L}^k_{\mu}(\phi,\psi)-\sum_{s} \rho_0(s)\nabla_{\phi} V_{\phi}(s) \\
=& \sum_{s,a}w(s,a)Z_{\mu}(\phi,\psi,\theta^{k},s,a)\left(\gamma\mathbb{E}_{s'|s,a}[\nabla_{\phi} V_{\phi}(s')]-\nabla_{\phi} V_{\phi}(s)\right)\\
=& \sum_{s,a,s'}p_w(s,a,s')Z_{\mu}(\phi,\psi,\theta^{k},s,a)\left(\gamma\nabla_{\phi} V_{\phi}(s')-\nabla_{\phi}V_{\phi}(s)\right),
\end{split}
\ee
where $p_w(s,a,s')=w(s,a)P(s'|s,a)$ can be viewed as a probability function of transition tuples. The last equation follows from \eqref{sumP=1}. The right-hand term in \eqref{grad-v} contains two expectations of the next state $s'$ for each pair $(s,a)$: one is the gradient of the value function at $s'$ and the other comes from the definition of the $Z$-function. Thus, to estimate the gradient $\nabla_{\phi}\mathcal{L}_{\mu}^k(\phi,\psi)$, independent samples of the next state $s'$ are required for evaluating $\nabla_{\phi} V_{\phi}(s')$ and approximating $Z_{\mu}(\phi,\psi,\theta^{k},s,a)$ separately, while only one $s'$ is observed from the pair $(s,a)$ in sampling case. This arises the well-known double-sampling obstacle.


To overcome the challenges of estimating $\mathcal{L}^k_{\mu}(\phi,\psi)$ and its gradient, we consider approximating the intractable function $Z_{\mu}(\phi,\psi,\theta^{k},s,a)$ with an additional function $y_{\tau}(s,a)$. The weight of the network $\tau$ is trained to approximate the $Z$-function with a weighted least square regression, i.e.,
\be\label{para-x}
\min_\tau\sum\limits_{s,a}w(s,a)\left(Z_{\mu}(\phi,\psi,\theta^{k},s,a)-y_{\tau}(s,a)\right)^2.
\ee
By adding a term unrelated to $\tau$ as 
\bee
\sum_{s,a,s'} p_w(s,a,s')z_\mu(\phi,\psi,\theta^k,s,a,s')^2 - \sum_{s,a} w(s,a)Z_\mu(\phi,\psi,\theta^k,s,a)^2
\eee
to the objective function of \eqref{para-x} and using the relationships in \eqref{sumP=1} and \eqref{Z-z}, we can equivalently transform the optimization problem \eqref{para-x} into 
\be\label{eq:wls2}
\min_{\tau}  \sum_{s,a,s'}p_w(s,a,s')\left(z_\mu(\phi,\psi,\theta^k,s,a,s') -  y_{\tau}(s,a)  \right)^2,
\ee
which enables stochastic gradient optimization by taking samples on the transition tuples $(s, a, r, s')$. Consequently, an approximation of $\mathcal{L}^k_{\mu}(\phi,\psi)$ is defined as
\begin{equation}\label{Lwithx}
\begin{split}
\tilde{\mathcal{L}}^k_{\mu}(\phi,\psi,\tau) = &\sum_{s} \rho_0(s)V_{\phi}(s) +\frac{1}{\mu}\sum_{s,a}w(s,a)y_{\tau}(s,a)Z_\mu(\phi,\psi,\theta^{k},s,a)\\
=&\sum_{s} \rho_0(s)V_{\phi}(s) +\frac{1}{\mu}\sum_{s,a,s'}p_w(s,a,s')y_{\tau}(s,a)z_\mu(\phi,\psi,\theta^{k},s,a,s'),
\end{split}
\end{equation}
which is a summation of ordinary expectations over initial states and transition tuples. Therefore, its value and gradient are relatively easy to compute. Suppose that $y_{\tau}(s,a)$ is the solution of \eqref{para-x} with respect to $\phi,\psi$, then the gradients $\nabla_{\phi}\tilde{\mathcal{L}}^k_{\mu}(\phi,\psi,\tau)$ and $\nabla_{\psi}\tilde{\mathcal{L}}^k_{\mu}(\phi,\psi,\tau)$ are exactly the same as those of $\mathcal{L}^k_{\mu}(\phi,\psi)$, respectively. Furthermore, if $(\phi^{k+1},\psi^{k+1})$ is the minimizer of $\mathcal{L}^k_{\mu}(\phi,\psi)$, then the corresponding function $y_{\tau^{k+1}}(s,a)$ is identical to the multiplier $x_{\theta^{k+1}}(s,a)$. In other words, it implies that the optimization problem \eqref{para-x} is equivalent to the multiplier update in \eqref{eq:xk}. These properties provide a particular strategy on solving the deep parameterized ALM.

\subsection{A Deep Parameterized ALM}
Based on the proposed strategy for handling double-sampling obstacle in $\mathcal{L}^k_{\mu}(\phi,\psi)$, we can directly take the multiplier $x_{\theta}$ to substitute the additional function $y_{\tau}$ in \eqref{eq:wls2}, as well as in \eqref{Lwithx}. Generally, by denoting
\begin{equation*}\label{Lwithx-final}
\begin{split}
\tilde{\mathcal{L}}^k_{\mu}(\phi,\psi,\theta) =\sum_{s} \rho_0(s)V_{\phi}(s) +\frac{1}{\mu}\sum_{s,a,s'}p_w(s,a,s')x_{\theta}(s,a)z_\mu(\phi,\psi,\theta^{k},s,a,s'),
\end{split}
\end{equation*}
the deep parameterized ALM performs
\bea
\phi^{k+1} ,\psi^{k+1}&=& \arg\min_{\phi,\psi}\tilde{\mathcal{L}}^k_{\mu}(\phi,\psi,\theta^k), \notag\\ 
\theta^{k+1} &=& \arg\min_{\theta}\sum_{s,a,s'}p_w(s,a,s')[z_\mu(\phi^{k+1},\psi^{k+1},\theta^k,s,a,s')-x_{\theta}(s,a)]^2. \notag
\eea
The multiplier update is similar as the derivation from \eqref{para-x} to \eqref{eq:wls2}. Practically, we can approximate these two minimization problems with single or multiple stochastic gradient steps. Namely, at $k$-th iteration, starting from $\phi^k_0 = \phi^k$, $\psi^k_0 = \psi^k$ and $\theta^k_0 =\theta^k$, the following three operations are performed at each inner step $t$:
\bea
\phi^k_{t+1} &=& \phi^k_t - \beta_{\phi}\cdot\hat{\nabla}_{\phi}\tilde{\mathcal{L}}^k_{\mu}(\phi^{k}_t,\psi^k_t, \theta^k_{t+1}), \label{update-V}\\
\psi^k_{t+1} &=& \psi^k_t - \beta_{\psi}\cdot\hat{\nabla}_{\psi}\tilde{\mathcal{L}}^k_{\mu}(\phi^{k}_{t+1},\psi^k_{t},\theta^k_{t+1}), \label{update-h}\\
\theta^k_{t+1} &=&\theta^k_t - \beta_{\theta}\cdot\left(x_{\theta}(s,a)-z_\mu(\phi^{k}_{t},\psi^{k}_{t},\theta^k,s,a,s') \right)\nabla_{\theta} x_{\theta}(s,a),\label{update-x}
\eea
where $\beta_{\phi},\beta_{\psi},\beta_{\theta}$ are the step sizes. The inner iteration terminates until some stop criterion holds, and the last iterators  $\phi^k_t, \psi^k_t$ and $\theta^k_{t}$ are assigned to $\phi^{k+1}, \psi^{k+1}$ and $\theta^{k+1}$, respectively. 

\section{A Stochastic Composite Augmented Lagrangian Algorithm}\label{The Augmented Lagrangian Method}

The multiplier $x_{\theta}(s,a)$ may not fit the $Z$-function well in the deep parameterized ALM since the number of samples is limited and the step \eqref{update-x} is inexact. We observe that the performance is not stable in practice and the approximate function $\tilde{\mathcal{L}}^k_{\mu}(\phi,\psi,\theta)$ is often not possible to generate desirable update direction for the primal variable $\phi$ and slack variable $\psi$. Therefore, the iteration \eqref{update-V}-\eqref{update-x} is highly sensitive to the hyper-parameters and usually results in an unstable performance.

\subsection{A Quadratic Penalty Optimization}
Based on the connections between $\tilde{\mathcal{L}}^k_{\mu}(\phi,\psi,\theta)$ and $\mathcal{L}^k_{\mu}(\phi,\psi)$, we can incorporate the two steps in parameterized ALM with a single constrained optimization. We consider the following constrained problem
\be\begin{split}\label{constrained-alf}
\min_{\phi,\psi,\theta}\ \tilde{\mathcal{L}}^k_{\mu}(\phi,\psi,\theta),\ \mathrm{s.t.}\ x_{\theta}(s,a)=Z_\mu(\phi,\psi,\theta^{k},s,a),\forall s,a.
\end{split}
\ee
It is worth noting that, the coefficient in the second term of $\tilde{\mathcal{L}}^k_{\mu}(\phi,\psi,\theta)$ is $\frac{1}{\mu}$, which is different from that of the augmented Lagrangian function \eqref{randALM}, i.e., $\frac{1}{2\mu}$. However, $\tilde{\mathcal{L}}^k_{\mu}(\phi,\psi,\theta)$ is equivalent to the augmented Lagrangian function of the primal LP problem \eqref{eq:rllp-primal} when its objective function is multiplied with two. Actually, multiplying any positive scalar for the LP problem does not change the solution of \eqref{eq:rllp-primal}. That is to say, the constrained optimization \eqref{constrained-alf} is an integration of the two steps in ALM. We can simultaneously accomplish the updates of the primal, slack and multiplier variables by solving a single constrained optimization problem.

In order to approximately solve \eqref{constrained-alf}, we consider penalizing the constraints into the objective function to form a quadratic penalty optimization, i.e.,
\be\label{comp-func}
\begin{split}
\min_{\phi,\psi,\theta}\ \tilde{\mathcal{L}}^k_{\beta,\mu}(\phi,\psi,\theta):=&\tilde{\mathcal{L}}^k_{\mu}(\phi,\psi,\theta)\\%
&+\frac{\beta}{2}\sum_{s,a} w(s,a)\left[x_{\theta}(s,a) - \left(Z_\mu(\phi,\psi,\theta^{k},s,a)\right)\right]^2,
\end{split}
\ee
where $\beta>0$. 
We employ the semi-gradient for $\phi$, specifically, the term $r(s,a)+ \gamma\sum_{s'}P(s'|s,a)V_{\phi}(s')$ in $Z$-function is treated as a target. This is a bootstrapping way originated from temporal-difference learning and is commonly used in many RL algorithms, such as in \cite{mnih2015human,wang2017randomized}. Therefore, we introduce a target value network $V_{\phi^\mathrm{targ}}$ as the usage in DQN. Additionally, we also replace $x_{\theta^k}$ in \eqref{comp-func} with a target multiplier network $x_{\theta^{\mathrm{targ}}}$ which shares the same structure as that of $x_{\theta}$. We denote
\bea
z^{\mathrm{targ}}_\mu(\phi,\psi,s,a,s') &=& x_{\theta^{\mathrm{targ}}}(s,a)+  \mu \left(h_{\psi}(s,a)+r(s,a)+ \gamma V_{\phi^{\mathrm{targ}}}(s') - V_{\phi}(s)\right),\notag\\
Z^{\mathrm{targ}}_\mu(\phi,\psi,s,a) &= &\mathbb{E}_{s'|s,a}[z^{\mathrm{targ}}_\mu(\phi,\psi,s,a,s')].\notag
\eea
Then the objective function in \eqref{comp-func} is reformulated as
\begin{equation}\label{comp-func-targ}
\begin{split}
&\hat{\mathcal{L}}^k_{\beta,\mu}(\phi,\psi,\theta)\\
&:=\sum_{s} \rho_0(s)V_{\phi}(s) + \frac{1}{\mu}\sum_{s,a,s'}p_w(s,a,s')x_{\theta}(s,a)z^{\mathrm{targ}}_\mu(\phi,\psi,s,a,s') \\
&\ \ \ +\frac{\beta}{2}\sum_{s,a} w(s,a)\left[x_{\theta}(s,a) - \left(Z^{\mathrm{targ}}_\mu(\phi,\psi,s,a)\right)\right]^2.
\end{split}\end{equation}
By adding a term unrelated to $\phi,\theta$ and $\psi$ as 
\bea\notag
&&\sum_{s,a,s'} p_w(s,a,s')[x_{\theta^{\mathrm{targ}}}(s,a)+\mu\left( r(s,a)+\gamma V_{\phi^{\mathrm{targ}}}(s')\right)]^2 \\
&&- \sum_{s,a} w(s,a)\left(x_{\theta^\mathrm{targ}}(s,a)+\mu\left[ r(s,a)+\gamma\sum_{s'}P(s'|s,a) V_{\phi^{\mathrm{targ}}}(s')\right]\right)^2\notag
\eea
to the last square function of \eqref{comp-func-targ}, we can rearrange it as
\begin{equation*}
\begin{split}
&\hat{\mathcal{L}}^{k}_{\beta,\mu}(\phi,\psi,\theta)\\
&=\sum_{s} \rho_0(s)V_{\phi}(s) +  \frac{1}{\mu}\sum_{s,a,s'}p_w(s,a,s')x_{\theta}(s,a)z^{\mathrm{targ}}_\mu(\phi,\psi,s,a,s')\\
&\ \ \ +\frac{\beta}{2}\sum_{s,a,s'} p_w(s,a,s')\left(x_{\theta}(s,a) - z^{\mathrm{targ}}_\mu(\phi,\psi,s,a,s')\right)^2,
\end{split}\end{equation*}
which can be easily evaluated with states and transitions sampled from $\rho_0$ and $p_w(s,a,s')$, respectively. 
In the $k$-th iteration, we need to solve the following optimization problem
\bea
\phi^{k+1},\psi^{k+1},\theta^{k+1}  = \arg\min_{\phi,\psi,\theta}\ \hat{\mathcal{L}}^k_{\beta,\mu}(\phi,\psi,\theta)\label{compositional-obj} 
\eea
with appropriate hyper-parameters. 
\subsection{Stochastic Optimization}
We next consider the sampling process and give a stochastic algorithm. To be clear in advance, as the explicit form of the transition probability $P(s'|s,a)$ is usually unknown in practice, our derivation is constructed in the model-free setting. Therefore, to promote the data efficiency in an online RL setting, an experience replay buffer $\mathcal{D}$ is adopted to store the initial states $\{s_0\}$ and the transition tuples $\{(s_i,a_i,r_i,s'_{i})\}$ encountered in each episode. At each step, we randomly get a batch of initial state samples $\{s_{0,i}\}_{i=1}^b$ and transition tuples $\{(s_i,a_i,r_i,s'_{i})\}_{i=1}^b$ ($b$ is the batch size) from the replay buffer $\mathcal{D}$, in which the samples are assumed to be sampled from the probability $\rho_0(s)$ and $p_w(s,a,s')$, respectively. We denote
\begin{equation}\label{grad_tems}
\begin{split}
e_i =& \beta\left(z^{\mathrm{targ}}_\mu(\phi^k,\psi^k,s_i,a_i,s'_i)-x_{\theta^{k}}(s_i,a_i)\right),\\
g_i =& \nabla_{\phi}V_{\phi^k}(s_{0,i})-\left(x_{\theta^{k}}(s_i,a_i)+\mu e_i\right) \nabla_{\phi}V_{\phi^k}(s_i),\\
q_i=&(x_{\theta^k}(s_i,a_i)+\mu e_i)\nabla_{\psi} h_{\psi^k}(s_i,a_i),\\
m_i=&\frac{1}{\mu}\left(z^{\mathrm{targ}}_\mu(\phi^k,\psi^k,s_i,a_i,s'_i)-\mu e_i\right)\nabla_{\theta} x_{\theta^k}(s_i,a_i),
\end{split}\end{equation}
and update the parameters as
\begin{equation}\label{comp-phi-theta-psi}
\begin{split}
\phi^{k+1} =&\phi^k - \frac{\alpha_{\phi}}{b}\sum_{i=1}^bg_i,\\
\psi^{k+1} =&\psi^k-\frac{\alpha_{\psi}}{b}\sum_{i=1}^bq_i,\\
\theta^{k+1} =&\theta^k-\frac{\alpha_{\theta}}{b}\sum_{i=1}^bm_i,\\
\end{split}\end{equation}
where $\alpha_{\phi},\alpha_{\theta},\alpha_{\psi} >0$ are step size. We assign the current parameters $\theta^k$ and $\phi^k$ to the targets $\theta^{\mathrm{targ}}$ and $\phi^{\mathrm{targ}}$ for every $T$ iterations. The update rules in \eqref{comp-phi-theta-psi} is a type of Jacobi iteration. Alternatively, we can apply Gauss-Seidel iteration. However, the gradient estimation in deep neural network composes the forward and backward propagation. Thus, in consideration of the computational cost, Jacobi iteration is more practical.

In conclusion, we instantiate our method named SCAL in Algorithm \ref{DSALMcode}, combined with an experience replay $\mathcal{D}$ with a behavior policy (Line 4-5). Intuitively, the multiplier $x_{\theta^k}$ is approaching $x_{\theta^*}$ as the algorithm iterates. Therefore, we can exploit the policy obtained in previous iteration, i.e., $x_{\theta^{k}}$, as the behavior policy $\pi^{b}$. Lines 6-7 correspond to the stochastic gradient descents in \eqref{comp-phi-theta-psi}.
\begin{algorithm}
		\caption{A Stochastic Composite ALM (SCAL)}
		\label{DSALMcode}
		\begin{algorithmic}[1]	
		\STATE Initialize the points $ \phi^0,\psi^0,\theta^0$. Set step size $\alpha_{\phi},\alpha_{\psi},\alpha_{\theta}\in(0,1)$.
		\STATE Set $\phi^{\mathrm{targ}}=\phi^0 $ and $\theta^{\mathrm{targ}}=\theta^0 $;
		\FOR{$k = 0,...,K$}
		\STATE add a new transition $(s,a,r,s')$ into $\mathcal{D}$ by executing the behavior policy $\pi_{b}$.

		\STATE sample $\{(s_i,a_i,r_i,s'_i)\}_{i=1}^b$ and $\{s_{0,i}\}_{i=1}^b$from $\mathcal{D}$;
			
			\STATE compute $\{g_i\}_{i=1}^b,\{q_i\}_{i=1}^b$ and $\{m_i\}_{i=1}^b$ using \eqref{grad_tems};
			
			\STATE  update $\phi^{k+1},\psi^{k+1}$ and $\theta^{k+1}$ using \eqref{comp-phi-theta-psi}  ;

	\IF{k \% T == 0}
	\STATE Set $\phi^{\mathrm{targ}}=\phi^{k+1}$ and $\theta^{\mathrm{targ}}=\theta^{k+1}$;
	\ENDIF	
		
\ENDFOR	
		\end{algorithmic}
	\end{algorithm}

\section{Theoretical Analysis}\label{Theoretical Analysis}

\subsection{General convergence}
In this section, we analyze the convergence of the iteration \eqref{constrained-alf}. It is worth recalling the fact that the update \eqref{constrained-alf} is equivalent to an approximated augmented Lagrangian update \eqref{eq:Vk}-\eqref{eq:xk} upto a scaling. Multiplying the objective function in \eqref{eq:rllp-primal} by a constant does not change the optimal solution of \eqref{eq:rllp-primal}. 
We consider the difference between exact augmented Lagrangian update and the iteration \eqref{constrained-alf}. 
Define the error terms
\beaa
\epsilon_L^k &=& \mathcal{L}_{\mu}(V_{\phi^{k+1}},h_{\psi^{k+1}}, x_{\theta^{k}}) -\inf_{h \geq 0, V} \mathcal{L}_{\mu}(V,h,x_{\theta^k}), \\ 
\epsilon_x^k &=& \sum_{s,a} w(s,a)\left( x_{\theta^{k+1}}(s,a) - Z_\mu(V_{\phi^{k+1}},h_{\psi^{k+1}}, x_{\theta^k}, s,a) \right)^2.
\eeaa
Notice that the error terms $\epsilon_L^k$ and $\epsilon_x^k$ contain the optimization error and parameterization error. The first part is generated from the optimization algorithm and the second part is due to the parametrization way to represent variables. We show that if the error terms are bounded well, the update \eqref{constrained-alf} converges to the optimal solution of \eqref{eq:rllp-dual}.

The following lemma gives a connection between the proximal point method and augmented Lagrangian method. It shows that $Z_\mu(V,h,x,s,a)$ is a solution of an optimization problem.
\begin{lemma}\label{lem:lp}
    If $V^*,h^* = \arg\min_{h \geq 0,V}  \mathcal{L}_{\mu}(V,h, x')$, then the term $Z_\mu(V^*,h^*,x', s,a)$ is an optimal solution of the following problem
\begin{equation}\label{eq:prox-lp}
\begin{split}
\max_x& \sum_{s,a} r(s,a)w(s,a)x(s,a) - \frac{1}{2\mu}\sum_{s,a} w(s,a)(x(s,a) - x'(s,a))^2  \\
\st&  \sum_{s,a} (\delta_{s'}(s) - \gamma P(s'|s,a)) w(s,a)x(s,a) = \rho_0(s'), \forall s' \\
&x(s,a) \geq 0, \forall s,a.
\end{split}
\end{equation}
The optimal value of problem \eqref{eq:prox-lp} is equal to 
\be\label{eq:Fmu}
\mathcal{F}_{\mu}(V^*,h^*,x') := \mathcal{L}_{\mu}(V^*,h^*, x') - \frac{1}{2\mu}\sum_{s,a} w(s,a)x'(s,a)^2.
\ee

\end{lemma}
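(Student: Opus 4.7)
The plan is to recognize \eqref{eq:prox-lp} as a proximal step on the dual of the LP \eqref{eq:rllp-primal-eq} and to verify optimality by checking the KKT conditions at the candidate point $x^*(s,a) := Z_\mu(V^*, h^*, x', s, a)$. Because the objective of \eqref{eq:prox-lp} is strictly concave (in $x$, whenever $w$ is everywhere positive) and its constraints are affine, KKT is sufficient, so this will simultaneously certify optimality and yield the value formula \eqref{eq:Fmu} once $x^*$ is plugged back into the objective.

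The first step is to extract first-order conditions from $(V^*, h^*) = \arg\min_{h \geq 0, V} \mathcal{L}_\mu(V, h, x')$. Differentiating $\mathcal{L}_\mu$ in $V(s_0)$ and using \eqref{sumP=1} to rearrange the conditional expectation yields
\[
\sum_{s,a} w(s,a)\,(\delta_{s_0}(s) - \gamma P(s_0 \mid s,a))\, Z_\mu(V^*, h^*, x', s, a) = \rho_0(s_0), \qquad \forall s_0,
\]
which is precisely the equality constraint of \eqref{eq:prox-lp} at $x = x^*$. The first-order conditions for the block $h \geq 0$ provide $w(s,a)\,Z_\mu(V^*, h^*, x', s, a) \geq 0$ together with complementary slackness $h^*(s,a)\,Z_\mu(V^*, h^*, x', s, a) = 0$; the first supplies $x^* \geq 0$, while the second will be reused below.

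Next I would identify the dual variables of \eqref{eq:prox-lp}: take $V^*$ as the multiplier for the equality constraint and $w \cdot h^*$ (non-negative by primal feasibility) as the multiplier for $x \geq 0$. Solving the stationarity equation of \eqref{eq:prox-lp} in $x(s,a)$ for this pair of multipliers reconstructs $x(s,a) = x'(s,a) + \mu(h^*(s,a) + r(s,a) - V^*(s) + \gamma \mathbb{E}_{s'|s,a} V^*(s')) = x^*(s,a)$, and the complementary slackness $w(s,a)\,h^*(s,a)\,x^*(s,a) = 0$ was already established. All KKT conditions are thus met and $x^*$ is the unique maximizer of \eqref{eq:prox-lp}.

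For the value formula I would substitute $x^* = x' + \mu c^*$ with $c^*(s,a) := h^*(s,a) + r(s,a) + \gamma \mathbb{E}_{s'|s,a} V^*(s') - V^*(s)$ into both $\mathcal{F}_\mu(V^*, h^*, x')$ and the objective of \eqref{eq:prox-lp} at $x^*$, then expand. After routine cancellation the gap reduces to $\sum_s \rho_0(s) V^*(s) + \sum_{s,a} w(s,a)(c^*(s,a) - r(s,a))\, x^*(s,a)$. Complementary slackness eliminates the $h^*$ contribution, and reindexing the remaining $\sum_{s,a} w(s,a)(\gamma \mathbb{E}_{s'|s,a} V^*(s') - V^*(s))\, x^*(s,a)$ by swapping summation order over the transition kernel lets me apply the equality constraint to evaluate it as $-\sum_{s'} \rho_0(s') V^*(s')$, which exactly cancels the leading term. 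The main bookkeeping obstacle is this last reindexing step; once the sum is rearranged into $\sum_{s'} V^*(s')[\,\cdots\,]$, the equality constraint closes the argument mechanically.
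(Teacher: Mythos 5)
Your proposal is correct and follows essentially the same route as the paper's proof: verify the KKT conditions of the proximal problem \eqref{eq:prox-lp} at the candidate $x^*=Z_\mu(V^*,h^*,x',s,a)$, using the optimality conditions of $\min_{h\geq 0,V}\mathcal{L}_\mu(V,h,x')$ to supply feasibility and the dual multipliers, and then evaluate the objective at $x^*$ using the equality constraint to obtain \eqref{eq:Fmu}. Your treatment of the $h$-block via explicit complementary slackness is equivalent to the paper's positive-part formula, and your value computation matches the paper's after the same cancellation.
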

\begin{proof}
The KKT condition of the problem \eqref{eq:prox-lp} can be written as  
\bea
\begin{split}
&x(s,a) = x'(s,a) + \mu \left(h(s,a) + r(s,a)+\sum_{s'} \gamma P(s'|s,a) V(s') - V(s)\right), \forall s,a,\\
&\sum_{s,a} (\delta_{s'}(s) - \gamma P(s'|s,a)) w(s,a)x(s,a) = \rho_0(s'), \forall s', \label{eq:kkt} \\
&x(s,a) \geq 0, h(s,a) \geq 0, \forall s,a,
\end{split}
\eea
where $V$ and $h$ are the dual variables. Then, $x^*$ is an optimal solution if and only if there exists variables $V$ and $h$ such that $x^*, V$ and $h$ satisfy the condition \eqref{eq:kkt}. Suppose that $V^*,h^* = \arg\min_{h\geq 0,V}  \mathcal{L}_{\mu}(V,h, x')$. We have the optimality condition
\be\label{eq:kkt-lg}
    \begin{split}
    &\rho_0(s') - \sum_{s,a}w(s,a)Z_\mu(V^*,h^*,x',s,a)(\delta_{s'}(s) - \gamma P(s'|s,a)) = 0, \forall s' \\
    & \mu h(x,a) = \left[- x'(s,a) - \mu\left(r(s,a)+\sum_{s'} \gamma P(s'|s,a) V(s') - V(s)\right)\right]_+.
\end{split}
\ee
Let 
\bee
    \begin{split}
    x(s,a) &= Z_\mu(V^*,h^*,x', s,a)\\ 
    &= \left[x'(s,a) + \mu \left(r(s,a)+\sum_{s'} \gamma P(s'|s,a) V(s') - V(s)\right)\right]_+ \geq 0.
    \end{split}
\eee
Then the KKT condition \eqref{eq:kkt} holds, i.e., $x(s,a) = Z_\mu(V^*,h^*,x',s,a)$ is an optimal solution of \eqref{eq:prox-lp}.

Let $x^* = Z_\mu(V^*,h^*,x',s,a)$, then we obtain
\bee
\begin{split}
&\mathcal{F}_{\mu}(V^*,h^*,x') \\
= & \sum_s \rho_0(s) V(s) + \frac{1}{2\mu}\sum_{s,a}w(s,a) x^*(s,a)^2 - \frac{1}{2\mu}\sum_{s,a} w(s,a)x'(s,a)^2 \\
 = & \sum_s \rho_0(s) V(s) + \frac{1}{\mu}\sum_{s,a}w(s,a) x^*(s,a)(x^*(s,a)-x'(s,a)) \\
& - \frac{1}{2\mu}\sum_{s,a} w(s,a)(x^*(s,a)-x'(s,a))^2 \\
= & \sum_{s,a} r(s,a)w(s,a)x^*(s,a) - \frac{1}{2\mu}\sum_{s,a} w(s,a)(x^*(s,a)-x'(s,a))^2, 
\end{split}
\eee
where the last inequality is due to the first equation in \eqref{eq:kkt-lg} and the definition of $Z_\mu(V^*,h^*,x',s,a)$, which completes the proof.
\end{proof}

The next lemma shows the noise between one exact augmented Lagrangian update and 
the corresponding approximated update can be bounded by the two error terms $\epsilon_x^k$ and $\epsilon_L^k$.
\begin{lemma}\label{lem:ppt}
    Suppose that $\theta^k$ is generated by the iteration \eqref{constrained-alf}. Define $\hat{V}^{k},\hat{h}^k = \arg\min_{h\geq 0,V}  \mathcal{L}_{\mu}(V,h, x_{\theta^k})$ and let 
    \bee
        \hat{x}^{k+1}(s,a) = Z_\mu( \hat{V}^{k}, \hat{h}^k, x_{\theta^k},s,a).
    \eee
    Then we have
\bee
\sum_{s,a} w(s,a) (x_{\theta^{k+1}}(s,a) - \hat{x}^{k+1}(s,a) )^2 \leq  2\epsilon_x^k + 4\mu\epsilon_L^k.
\eee

\end{lemma}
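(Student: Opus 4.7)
The plan is to introduce the auxiliary quantity
$\tilde x^{k+1}(s,a) := Z_\mu(V_{\phi^{k+1}}, h_{\psi^{k+1}}, x_{\theta^k}, s, a)$,
namely the exact ALM multiplier update corresponding to the inexact minimizer $(V_{\phi^{k+1}}, h_{\psi^{k+1}})$. Splitting through this auxiliary via the elementary inequality $(a+b)^2\le 2a^2+2b^2$ in the $w$-weighted norm gives
\begin{equation*}
\sum_{s,a} w(s,a)\bigl(x_{\theta^{k+1}}(s,a) - \hat x^{k+1}(s,a)\bigr)^2 \le 2\epsilon_x^k + 2\sum_{s,a} w(s,a)\bigl(\tilde x^{k+1}(s,a) - \hat x^{k+1}(s,a)\bigr)^2,
\end{equation*}
since by definition $\sum w(x_{\theta^{k+1}} - \tilde x^{k+1})^2 = \epsilon_x^k$. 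The whole task then reduces to proving the key estimate $\sum_{s,a} w(s,a)(\tilde x^{k+1} - \hat x^{k+1})^2 \le 2\mu\,\epsilon_L^k$.

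To prove this key estimate, I would abbreviate $\tilde Z := \tilde x^{k+1}$, $\hat Z := \hat x^{k+1}$, $\Delta V := V_{\phi^{k+1}} - \hat V^k$, and $\Delta h := h_{\psi^{k+1}} - \hat h^k$, and expand $\epsilon_L^k$ using $\tilde Z^2 - \hat Z^2 = (\tilde Z - \hat Z)^2 + 2\hat Z(\tilde Z - \hat Z)$ in the definition of $\mathcal{L}_\mu$. From the identity $\tilde Z - \hat Z = \mu[\Delta h + \gamma\mathbb{E}_{s'|s,a}\Delta V(s') - \Delta V(s)]$, the cross term $\tfrac{1}{\mu}\sum w\hat Z(\tilde Z - \hat Z)$ splits into an $h$-contribution $\sum w\hat Z\Delta h$ and a $V$-contribution. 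The stationarity condition $\partial_V\mathcal{L}_\mu(\hat V^k,\hat h^k,x_{\theta^k}) = 0$ gives
\begin{equation*}
\sum_a w(s,a)\hat Z(s,a) - \gamma\sum_{s',a} P(s|s',a)w(s',a)\hat Z(s',a) = \rho_0(s),\qquad \forall s,
\end{equation*}
and a relabeling of the transition-weighted sum shows that the $V$-contribution equals $-\sum_s \rho_0(s)\Delta V(s)$, which exactly cancels the leading linear term $\sum_s \rho_0(s)\Delta V(s)$ arising from $\sum\rho_0(V_{\phi^{k+1}}-\hat V^k)$. What remains is the clean identity
\begin{equation*}
\epsilon_L^k = \frac{1}{2\mu}\sum_{s,a} w(s,a)(\tilde Z - \hat Z)^2 + \sum_{s,a} w(s,a)\hat Z(s,a)\Delta h(s,a).
\end{equation*}

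To close the argument I would invoke the KKT conditions for $\hat h^k\ge 0$: stationarity in $h$ (i.e.\ $\partial_h\mathcal{L}_\mu = w\hat Z$) forces $w(s,a)\hat Z(s,a)\ge 0$, and complementarity forces $\hat h^k(s,a)\hat Z(s,a) = 0$ pointwise; combined with the network-enforced nonnegativity $h_{\psi^{k+1}}\ge 0$, this yields $\sum w\hat Z\Delta h = \sum w\hat Z\, h_{\psi^{k+1}}\ge 0$. Hence $\epsilon_L^k \ge \tfrac{1}{2\mu}\sum w(\tilde Z - \hat Z)^2$, which inserted into the opening triangle-type inequality delivers the claimed bound $2\epsilon_x^k + 4\mu\epsilon_L^k$. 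The main obstacle is the middle cancellation step: one must carefully relabel the transition-weighted double sums so that the $V$-stationarity identity of $(\hat V^k,\hat h^k)$ can be applied, and simultaneously recognize that the leftover $h$-cross term is precisely the quantity on which complementary slackness of the simultaneous primal--dual variable $\hat Z$ (nonnegative multiplier and nonnegative slack-gradient) kills the negative part.
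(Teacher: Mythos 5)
Your proof is correct, and for the heart of the argument it takes a genuinely different route from the paper. Both proofs begin identically: introduce $\bar{x}^{k+1}=Z_\mu(V_{\phi^{k+1}},h_{\psi^{k+1}},x_{\theta^k},\cdot)$ (your $\tilde x^{k+1}$), split with $(a+b)^2\le 2a^2+2b^2$, and reduce everything to the key estimate $\sum_{s,a}w(s,a)(\bar x^{k+1}-\hat x^{k+1})^2\le 2\mu\,\epsilon_L^k$. The paper establishes this estimate through duality: it works with the function $\mathcal{F}_\mu(V,h,x)=\mathcal{L}_\mu(V,h,x)-\tfrac{1}{2\mu}\sum w\,x^2$ from Lemma 5.1, uses its concavity in $x$ to bound $\mathcal{F}_\mu(V_{\phi^{k+1}},h_{\psi^{k+1}},\kappa)$ for an arbitrary test point $\kappa$, identifies $\inf_{h\ge0,V}\mathcal{F}_\mu(\cdot,\cdot,\kappa)$ with the optimal value of the proximal LP \eqref{eq:prox-lp} centered at $\kappa$ (where $\hat x^{k+1}$ is feasible), and finally optimizes over $\kappa$. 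You instead compute directly in the primal: expand $\epsilon_L^k$ via $\tilde Z^2-\hat Z^2=(\tilde Z-\hat Z)^2+2\hat Z(\tilde Z-\hat Z)$, cancel the linear $\sum_s\rho_0\Delta V$ term against the $V$-stationarity identity of $(\hat V^k,\hat h^k)$ (the first equation of \eqref{eq:kkt-lg}), and dispose of the residual cross term $\sum w\hat Z\Delta h$ using $\hat Z\ge0$, complementarity $w\hat Z\hat h^k=0$, and $h_{\psi^{k+1}}\ge0$. Your computation checks out (the relabeling of $\sum_{s,a,s'}w(s,a)\hat Z(s,a)\gamma P(s'|s,a)\Delta V(s')$ does produce exactly $-\sum_s\rho_0(s)\Delta V(s)$ when combined with the stationarity equation, and the sign of the $h$-term is as you claim). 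The trade-off: your argument is more elementary and self-contained, needing only the first-order conditions of $\min_{h\ge0,V}\mathcal{L}_\mu$ rather than the full proximal-LP machinery of Lemma 5.1; the paper's argument makes the proximal-point structure explicit, which is then reused in the convergence theorem via Rockafellar's result. One cosmetic point: complementarity gives $w(s,a)\hat Z(s,a)\hat h^k(s,a)=0$ rather than the unweighted pointwise identity, but since every term is summed against $w$ this changes nothing.
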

\begin{proof}
    Let $\bar{x}^{k+1} = Z_\mu(V_{\phi^{k+1}},h_{\psi^{k+1}}, x_{\theta^k},s,a)$
    and $\mathcal{F}_{\mu}(V,h,x)$ is defined in \eqref{eq:Fmu}.
    It is easy to show that $\mathcal{F}_{\mu}(V,h,x)$ be a concave function with respect to $x$ and 
    \bee
        [\nabla_x \mathcal{F}_{\mu}(V,h,x)](s,a) = \frac{1}{\mu}\omega(s,a)\left(Z_\mu(V,h,x,s,a) - x(s,a)\right). 
    \eee
    It follows that for any $\kappa(s,a)$,  
    \bea 
    && \mathcal{F}_{\mu} (V_{\phi^{k+1}},h_{\psi^{k+1}}, x_{\theta^k}) \\ 
    &&+ \frac{1}{\mu}\sum_{s,a}w(s,a)(\bar{x}^{k+1}(s,a)-  x_{\theta^k}(s,a))(\kappa(s,a) -  x_{\theta^k}(s,a)) \label{eq:ieq-phi} \nonumber\\
    &\geq& \mathcal{F}_{\mu} (V_{\phi^{k+1}},h_{\psi^{k+1}}, \kappa) \geq \inf_{h\geq 0,V} \mathcal{F}_{\mu} (V,h, \kappa). \nonumber 
    \eea
    According to Lemma \ref{lem:lp}, the value $\inf_{h\geq 0,V} \mathcal{F}_{\mu} (V,h, \kappa)$
    is equal to the optimal value of the following problem
    \begin{equation}\label{eq:prox-lp-1}
\begin{split}
\max_x& \sum_{s,a} r(s,a)w(s,a)x(s,a) - \frac{1}{2\mu}\sum_{s,a} w(s,a)(x(s,a) - \kappa(s,a))^2  \\
\st&  \sum_{s,a} (\delta_{s'}(s) - \gamma P(s'|s,a)) w(s,a)x(s,a) = \rho_0(s'), \forall s' \\
&x(s,a) \geq 0, \forall s,a.
\end{split}
\end{equation}
Since $\hat{x}^{k+1}(s,a)$ satisfies the constraints of problem \eqref{eq:prox-lp-1}, it implies
\be\label{eq:ieq-phi-2}
\begin{split}
\inf_{h\geq 0,V} \mathcal{F}_{\mu} (V,h, \kappa) \geq& \sum_{s,a} r(s,a)w(s,a)\hat{x}^{k+1}(s,a) \\
&- \frac{1}{2\mu}\sum_{s,a} w(s,a)(\hat{x}^{k+1}(s,a) - \kappa(s,a))^2.
\end{split}
\ee
A similar argument yields
\be\label{eq:ieq-phi-3}
\begin{split}
\inf_{h\geq 0,V} \mathcal{F}_{\mu} (V,h, x_{\theta^k}) =& \sum_{s,a} r(s,a)w(s,a)\hat{x}^{k+1}(s,a) \\
&- \frac{1}{2\mu}\sum_{s,a} w(s,a)(\hat{x}^{k+1}(s,a) - x_{\theta^k}(s,a))^2.
\end{split}
\ee
Combining \eqref{eq:ieq-phi}, \eqref{eq:ieq-phi-2} and \eqref{eq:ieq-phi-3}, we obtain
\beaa
\epsilon_L^k &=& \mathcal{L}_{\mu}(V_{\phi^{k+1}},h_{\psi^{k+1}}, x_{\theta^{k}}) -\inf_{h\geq 0,V} \mathcal{L}_{\mu}(V, h, x_{\theta^k}) \\
&=& \mathcal{F}_{\mu} (V_{\phi^{k+1}}, h_{\psi^{k+1}}, x_{\theta^k}) - \inf_{h\geq 0,V} \mathcal{F}_{\mu}(V, h, x_{\theta^k}) \\
&\geq& - \frac{1}{\mu}\sum_{s,a}w(s,a)(\bar{x}^{k+1}(s,a)-  x_{\theta^k}(s,a))(\kappa(s,a) -  x_{\theta^k}(s,a)) \\ 
&& - \frac{1}{2\mu}\sum_{s,a} w(s,a)(\hat{x}^{k+1}(s,a) - \kappa(s,a))^2 \\
&& + \frac{1}{2\mu}\sum_{s,a} w(s,a)(\hat{x}^{k+1}(s,a) - x_{\theta^k}(s,a))^2 \\
&=&\frac{1}{\mu} \sum_{s,a} w(s,a)(\kappa(s,a)-x_{\theta^k}(s,a))(\hat{x}^{k+1}(s,a) - \bar{x}^{k+1}(s,a))  \\
&&- \frac{1}{2\mu}\sum_{s,a}w(s,a)(\kappa(s,a) - x_{\theta^k}(s,a)).
\eeaa
Since $\kappa$ is arbitrary, taking $\kappa = \hat{x}^{k+1} - \bar{x}^{k+1} + x_{\theta^k} $ gives
\bee
    \epsilon_L^k \geq  \frac{1}{2\mu}\sum_{s,a} w(s,a) (\hat{x}^{k+1}(s,a) - \bar{x}^{k+1}(s,a))^2. 
\eee
It follows that
\beaa
    &&\sum_{s,a} w(s,a) (x_{\theta^{k+1}}(s,a) - \hat{x}^{k+1}(s,a) )^2 \\
    &\leq& 2\sum_{s,a} w(s,a) (x_{\theta^{k+1}}(s,a) - \bar{x}^{k+1}(s,a) )^2 \\ 
    && + 2\sum_{s,a} w(s,a) (\bar{x}^{k+1}(s,a) - \hat{x}^{k+1}(s,a) )^2 \\
    &\leq& 2\epsilon_x^k + 4\mu\epsilon_L^k,
\eeaa
which completes the proof.
\end{proof}

We next give the main convergence theorem.
\begin{theorem}
If the error terms $\epsilon^k_L$ and $\epsilon^k_x$ satisfy that $\sum_k \epsilon^k_L < \infty$ and  $\sum_k \epsilon^k_x < \infty$, then $x_{\theta^k}$ generated by iteration \eqref{constrained-alf} converges to the optimal solution of \eqref{eq:rllp-primal}.
\end{theorem}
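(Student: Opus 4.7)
The plan is to recast iteration \eqref{constrained-alf} as an inexact proximal point method (PPM) on the dual LP \eqref{eq:rllp-dual} and combine the Fej\'er monotonicity of the exact PPM with the error bound supplied by Lemma \ref{lem:ppt}. The two preceding lemmas do most of the structural work; what remains is a careful quasi-Fej\'er recursion, plus invoking LP duality at the end to convert convergence of the dual iterates into the primal statement asserted in the theorem.

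The first step is to invoke Lemma \ref{lem:lp}: the \emph{exact} augmented Lagrangian update $x_{\theta^k}\mapsto \hat{x}^{k+1}$, with $\hat{x}^{k+1}(s,a)=Z_\mu(\hat V^k,\hat h^k, x_{\theta^k}, s,a)$ and $(\hat V^k,\hat h^k)$ a true minimizer of $\mathcal{L}_\mu(\cdot,\cdot, x_{\theta^k})$, is precisely one iteration of PPM applied to the dual LP \eqref{eq:rllp-dual} with proximal parameter $\mu$ in the weighted inner product $\langle u,v\rangle_w:=\sum_{s,a}w(s,a)u(s,a)v(s,a)$. Because the penalized functional $F(\cdot)-\frac{1}{2\mu}\|\cdot-x_{\theta^k}\|_w^2$ is $\frac{1}{\mu}$-strongly concave, standard PPM theory yields, for every optimal $x^\star$ of \eqref{eq:rllp-dual},
\[
\|\hat{x}^{k+1} - x^\star\|_w^2 \;\le\; \|x_{\theta^k} - x^\star\|_w^2 \;-\; \|\hat{x}^{k+1} - x_{\theta^k}\|_w^2.
\]
Lemma \ref{lem:ppt} then controls the inexactness via $\|x_{\theta^{k+1}} - \hat{x}^{k+1}\|_w^2 \le 2\epsilon_x^k + 4\mu\epsilon_L^k$, whose right-hand side is summable in $k$ by hypothesis.

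Combining the two displays through $(a+b)^2\le(1+\eta_k)a^2+(1+\eta_k^{-1})b^2$ with $a=\|\hat{x}^{k+1}-x^\star\|_w$, $b=\|x_{\theta^{k+1}}-\hat{x}^{k+1}\|_w$, and an auxiliary sequence $\eta_k>0$, I would derive a quasi-Fej\'er recursion
\[
\|x_{\theta^{k+1}} - x^\star\|_w^2 \;\le\; (1+\eta_k)\|x_{\theta^k}-x^\star\|_w^2 \;-\;\|\hat{x}^{k+1}-x_{\theta^k}\|_w^2 \;+\; \xi_k,
\]
with $\xi_k=O\bigl((1+\eta_k^{-1})(\epsilon_x^k+\mu\epsilon_L^k)\bigr)$. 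Once $\eta_k$ and $\xi_k$ are both summable, the Robbins--Siegmund supermartingale lemma gives simultaneously that $\{\|x_{\theta^k}-x^\star\|_w\}$ converges for every optimal $x^\star$ and that $\sum_k\|\hat{x}^{k+1}-x_{\theta^k}\|_w^2<\infty$, whence $\|\hat{x}^{k+1}-x_{\theta^k}\|_w\to 0$. Every cluster point of $\{x_{\theta^k}\}$ is therefore a fixed point of the proximal mapping and hence an optimal solution of \eqref{eq:rllp-dual}; an Opial-type argument based on convergence of the Fej\'er distances rules out multiple cluster points and yields full-sequence convergence. Finally, strong LP duality applied to \eqref{eq:rllp-primal}--\eqref{eq:rllp-dual} together with the primal-dual correspondence recorded in Section \ref{Preliminaries} upgrades this to the primal statement claimed by the theorem.

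The main obstacle is the calibration of $\eta_k$: one needs $\sum_k\eta_k<\infty$ \emph{and} $\sum_k\eta_k^{-1}(\epsilon_x^k+\mu\epsilon_L^k)<\infty$ simultaneously, which from only $\ell^1$-control of $\epsilon_x^k$ and $\epsilon_L^k$ is nontrivial. A direct choice such as $\eta_k=\sqrt{\epsilon_x^k+\mu\epsilon_L^k}$ would make the right-hand side summable only if $\sum_k\sqrt{\epsilon_x^k+\mu\epsilon_L^k}<\infty$, a mild strengthening of the stated hypothesis; alternatively, one may first bootstrap boundedness of $\{x_{\theta^k}\}$ from the negative term $-\|\hat{x}^{k+1}-x_{\theta^k}\|_w^2$ on the right-hand side, which allows the cross term to be absorbed with a summable majorant under the stated hypothesis. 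A cleaner route is to exploit the polyhedral structure of \eqref{eq:rllp-dual} to sharpen Lemma \ref{lem:ppt} into an $\ell^1$-summable form directly; I expect this sharpening, rather than the PPM machinery itself, to be the delicate part of the argument.
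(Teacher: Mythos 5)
Your route is exactly the paper's: identify the exact augmented Lagrangian step with a proximal point step on the dual LP via Lemma \ref{lem:lp}, bound the deviation of the actual iterate from that exact step by $2\epsilon_x^k+4\mu\epsilon_L^k$ via Lemma \ref{lem:ppt}, and then run an inexact--PPM convergence argument. The only difference is in the last step: where you carry out the quasi-Fej\'er/Robbins--Siegmund recursion by hand (and then invoke LP duality to pass from the dual iterates $x_{\theta^k}$ to the primal statement), the paper simply cites \cite[Theorem 1]{rockafellar1976augmented} and stops. So in substance the two proofs coincide.

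The calibration difficulty you flag at the end is genuine, and you should know that it is not an artifact of your write-up --- it is present in the paper's own proof. Rockafellar's summability criterion for the inexact proximal point algorithm is $\sum_k\|x_{\theta^{k+1}}-\hat x^{k+1}\|_w<\infty$, i.e.\ summability of the error \emph{norms}, whereas Lemma \ref{lem:ppt} only gives $\|x_{\theta^{k+1}}-\hat x^{k+1}\|_w^2\le 2\epsilon_x^k+4\mu\epsilon_L^k$, so the stated hypotheses $\sum_k\epsilon_L^k<\infty$, $\sum_k\epsilon_x^k<\infty$ deliver summability of the \emph{squared} norms only. As you observe, no choice of $\eta_k$ in the $(1+\eta_k)$-splitting rescues this (taking $\eta_k=e_k$ forces $\sum_k e_k<\infty$ again, and bootstrapping boundedness of $\{x_{\theta^k}\}$ also requires $\sum_k e_k<\infty$), and squared-summable errors are known to be insufficient for inexact PPM in general. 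The clean repairs are the ones you name: either strengthen the hypothesis to $\sum_k(\epsilon_x^k+\mu\epsilon_L^k)^{1/2}<\infty$ (which is presumably what the authors intend, and matches how such error conditions are usually stated), or exploit the polyhedrality of \eqref{eq:rllp-dual} to get a linear-rate error bound for the exact PPM step, under which squared-summable perturbations do suffice. Either way, your diagnosis is correct and sharper than the paper's treatment.
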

\begin{proof}
We first claim that iteration \eqref{constrained-alf} is equalvalent to one iteration in the augmented Lagrangian method under parameterization, i.e.,
\be\label{eq:Vk-para}
\begin{split}
\phi^{k+1} ,\psi^{k+1}&= \arg\min_{\phi,\psi} \frac{1}{2}\sum_{s} \rho_0(s)V_\phi(s) +\frac{1}{2\mu}\sum_{s,a}w(s,a)[Z_\mu(V_\phi,h_\psi,x_{\theta^k},s,a)]^2, \\ 
\theta^{k+1} &= \arg\min_\theta \sum\limits_{s,a}w(s,a)\left(Z_{\mu}(\phi,\psi,\theta^{k},s,a)-x_{\theta}(s,a)\right)^2. 
\end{split}
\ee
The constraint in \eqref{constrained-alf} is the second update in \eqref{eq:Vk-para}. If we replace $x$ by the constraints in $\tilde{\mathcal{L}}^k_{\mu}(\phi,\psi,\theta)$, the optimization problem is the first step in update \eqref{eq:Vk-para}.
According to Lemmas \ref{lem:lp} and \ref{lem:ppt}, the iterations \eqref{eq:Vk-para} are equal to the proximal point method with noise $2\epsilon_x^k + 4\mu\epsilon_L^k$. It follows from \cite[Theorem 1]{rockafellar1976augmented} that $x_{\theta^k}$ converges to the optimal solution of \eqref{eq:rllp-primal}. 
\end{proof}
%
%
%
%
%

\subsection{Convergence analysis under neural tangent kernel}
In this subsection, we analyze the convergence of the scheme \eqref{comp-func} under neural tangent kernel setting. 
For convenience, we consider the discrete case (state and action space is discrete). The conclusions for the continuous case can be proven similarly under proper assumptions.
Neural tangent kernel is a basic tool to understand the approximation of neural network. It has been used to analyze the convergence of RL algorithms under neural network parametrization. 

We next parametrize the variables by the following two-layer neural networks:
\bea
V_{\phi} (s) &=& \frac{1}{\sqrt{m_V}} \sum_{i=1}^{m_V} \phi_{1,i} \cdot \sigma(\phi_{2,i}^Ts), \\ 
h_{\psi} (s,a) &=& \frac{1}{\sqrt{m_h}} \sum_{i=1}^{m_h} \psi_{1,i} \cdot \sigma(\psi_{2,i}^T(s,a)), \\
x_{\theta} (s,a) &=& \frac{1}{\sqrt{m_x}} \sum_{i=1}^{m_x} \theta_{1,i} \cdot \sigma(\theta_{2,i}^T(s,a)),
\eea
where $m_V, m_h, m_x$ are integers, $\sigma =  \max(0, x)$ is the ReLU function. The parameters are randomly initialized by either the uniform distribution or normal distribution, i.e.,
\be\label{eq:dist}
\begin{split}
&\phi_{1,i}, \theta_{1,i} \sim \mathcal{U}(\{1,-1\}), \quad \psi_{1,i} = 1, \\
&\phi_{2,i} \sim \mathcal{N}(0, I_{d_{s}}/d_s),\quad \psi_{2,i},\theta_{1,i} \sim \mathcal{N}(0, I_{d_{sa}}/d_{sa}), 
\end{split}
\ee
where $d_{s}$ is the dimension of $s$ and $d_{sa}$ is the dimension of $(s,a)$. 
In particular, $\phi_{1,i}, \psi_{1,i}, \theta_{1,i}$ are fixed and we only optimize the parameter $\phi_{2,i}, \psi_{2,i}, \theta_{2,i}$.
Let $\phi = (\phi_{2,i})_{i=1}^{m_V}$, $\psi = (\psi_{2,i})_{i=1}^{m_V}$ and $\theta = (\theta_{2,i})_{i=1}^{m_V}$.
The corresponding random initialization parameters are denoted by $\phi^0, \psi^0, \theta^0$. The expectation under the distribution \eqref{eq:dist} is denoted by $E_0$.

Define the following function classes 
\beaa
\mathcal{F}^V_{R_V, m_V} &=& \{\frac{1}{\sqrt{m_V}} \sum_{i=1}^{m_V} \phi_{1,i} \cdot 1_{\{(\phi_{2,i}^0)^Ts \geq 0 \}} \phi_{2,i}^Ts : \ \|\phi-\phi^0\| \leq R_V\},\\
\mathcal{F}^h_{R_h, m_h} &=& \{\frac{1}{\sqrt{m_h}} \sum_{i=1}^{m_h} \psi_{1,i} \cdot 1_{\{(\psi_{2,i}^0)^T(s,a) \geq 0 \}} \psi_{2,i}^T(s,a): \ \|\psi-\psi^0\| \leq R_h\}, \\
\mathcal{F}^x_{R_x, m_x} &=& \{\frac{1}{\sqrt{m_h}} \sum_{i=1}^{m_h} \theta_{1,i} \cdot 1_{\{(\theta_{2,i}^0)^T(s,a) \geq 0 \}} \theta_{2,i}^T(s,a): \ \|\theta-\theta^0\| \leq R_x\},
\eeaa
and let 
\bee
\Xi = \{\zeta: \|\phi-\phi^0\| \leq R_V, \|\psi-\psi^0\| \leq R_h \text{ and } \|\theta-\theta^0\| \leq R_x\}.
\eee
For simplicity, we assume  that $\|(s,a)\| \leq 1$, which follows that $\|s\| \leq 1$. On $\Xi$, we have
\be\label{eq:bv}
\begin{split}
&|V_\phi(s) - V_{\phi^0}(s)| \leq \frac{1}{\sqrt{m_V}} \sum_{i=1}^{m_V} |\phi_{1,i}| \cdot |\sigma(\phi_{2,i}^Ts) - \sigma((\phi_{2,i}^0)^Ts)| \\
 \leq& \frac{1}{\sqrt{m_V}} \sum_{i=1}^{m_V} |\phi_{1,i}| \cdot |\phi_{2,i}^Ts - (\phi_{2,i}^0)^Ts| \leq \frac{1}{\sqrt{m_V}} \sum_{i=1}^{m_V} \|\phi_{2,i} - (\phi_{2,i}^0)\| \leq R_V^{1/2}.
 \end{split}
\ee
Similarly, we have $|h_\psi(s) - h_{\psi^0}(s)| \leq R_h^{1/2}$ and $|x_\theta(s) - x_{\theta^0}(s)| \leq R_x^{1/2}$.
We can also obtain that
\beaa
\|\nabla_\phi V_\phi(s) \|^2 = \frac{1}{m_V} \sum_{i=1}^{m_V} 1_{\{(\phi_{2,i})^Ts \geq 0 \}} \cdot \|s\|^2 \leq 1.
\eeaa
We also yields $\|\nabla_\psi h_\psi(s,a) \| \leq  1$ and $\|\nabla_\theta x_\theta(s,a) \| \leq  1$.
Consider that $(V,h,x)$ is in the Hilbert space $\mathcal{H}$ with inner product
\[
\iprod{(V,h,x)}{(V',h',x')}_{\mathcal{H}} = \sum_{s,a}w(s,a)\left(V(s)^2+ x(s,a)^2 + h(s,a)^2 \right).
\]
The corresponding norm in $\mathcal{H}$ is denoted by $\|\cdot\|_{\mathcal{H}}$.
Define
\begin{equation}\label{comp-func-non}
 \begin{split}
 \tilde{\mathcal{L}}^k_{\beta,\mu}(V,h,x) 
&:= \sum_{s} \rho_0(s)V(s) +\frac{1}{2\mu}\sum_{s,a}w(s,a)x(s,a)Z_\mu(V,h,x_{\theta^{k}},s,a)\\
&+\frac{\beta}{2}\sum_{s,a} w(s,a)\left[x(s,a) - \left(Z_\mu(V,h,x_{\theta^{k}},s,a)\right)\right]^2.
\end{split}
\end{equation}
Here, we use the same notation as \eqref{comp-func}. It is easy to distinguish them according to variables. Let $\tilde{V}^k, \tilde{h}^k, \tilde{x}^k = \arg\min_{h\geq 0, V, x} \tilde{\mathcal{L}}^k_{\beta,\mu}(V,h,x)$. The following lemma shows properties of $\tilde{\mathcal{L}}^k_{\beta,\mu}(V,h,x)$. 
\begin{lemma}\label{lem:cvx}
    If $\beta > \frac{1}{4\mu}$, the following claims hold.
    \begin{enumerate}[(i)]
        \item $\tilde{\mathcal{L}}^k_{\beta,\mu}(V,h,x)$ is a restricted strongly convex function with respect to $V,h,x$, i.e., there exits a constant $c$ such that
        \bee
            \tilde{\mathcal{L}}^k_{\beta,\mu}(V,h,x) - \tilde{\mathcal{L}}^k_{\beta,\mu}(\tilde{V}^k, \tilde{h}^k, \tilde{x}^k) \geq \frac{c}{2}\dist\left((V,h,x),\mathcal{X}^*\right)^2,  
        \eee
        where $\mathcal{X}^*$ is the set of optimal solutions of $\tilde{\mathcal{L}}^k_{\beta,\mu}(V,h,x)$.
        The gradient of $\tilde{\mathcal{L}}^k_{\beta,\mu}(V,h,x)$ is $L$-Lipschitz continuous. 
        \item $\tilde{\mathcal{L}}^k_{\beta,\mu}(V,h,x)$ is a $\frac{1}{2\mu}(1-\frac{1}{4\mu\beta})$-stongly convex function with respect to $x$ for fixed $V,h$.
    \end{enumerate}
\end{lemma}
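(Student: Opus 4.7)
The plan is to exploit the fact that $\tilde{\mathcal{L}}^k_{\beta,\mu}$ is a quadratic function of $(V,h,x)$: since $Z_\mu(V,h,x_{\theta^k},s,a) = x_{\theta^k}(s,a) + \mu\!\left(h(s,a)+r(s,a)+\gamma\mathbb{E}_{s'|s,a}[V(s')] - V(s)\right)$ is affine in $(V,h)$ with $x_{\theta^k}$ held fixed, the whole objective is the sum of the linear term $\sum_s \rho_0(s)V(s)$ and bilinear/quadratic pieces in $(V,h,x)$. Its Hessian is therefore a constant operator, which instantly gives $L$-Lipschitz continuity of $\nabla \tilde{\mathcal{L}}^k_{\beta,\mu}$ with an explicit $L$ bounded in terms of $\beta$, $\mu$, $\gamma$, and the weights $w$ (using that the conditional expectation operator has norm one and $\sum_{s,a} w(s,a)=1$). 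All that is left is to pin down the strong convexity constants.

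Introduce the shorthand $\tilde x(V,h,s,a):= Z_\mu(V,h,x_{\theta^k},s,a)$ and expand the square; after collecting terms the quadratic part of $\tilde{\mathcal{L}}^k_{\beta,\mu}$ becomes $\sum_{s,a} w(s,a)\bigl[\tfrac{\beta}{2}x(s,a)^2 + (\tfrac{1}{2\mu}-\beta)\,x(s,a)\tilde x(s,a) + \tfrac{\beta}{2}\tilde x(s,a)^2\bigr]$. For each $(s,a)$ the $2\times 2$ form in $(x(s,a),\tilde x(s,a))$ is governed by $M=\bigl(\begin{smallmatrix} \beta & \tfrac{1}{2\mu}-\beta \\ \tfrac{1}{2\mu}-\beta & \beta \end{smallmatrix}\bigr)$, whose trace is $2\beta$ and whose determinant equals $\tfrac{1}{\mu}(\beta-\tfrac{1}{4\mu})$. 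For $\beta>\tfrac{1}{4\mu}$ both are strictly positive, and the elementary bound $\lambda_{\min}(M)\geq \det(M)/\mathrm{tr}(M)$ yields $\lambda_{\min}(M)\geq \tfrac{1}{2\mu}(1-\tfrac{1}{4\mu\beta})$; this is the common eigenvalue estimate that will drive both (i) and (ii).

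For claim (ii), fixing $(V,h)$ freezes $\tilde x$ as a constant in $x$, so the function reduces to $\tfrac{\beta}{2}\sum_{s,a} w(s,a)\,x(s,a)^2$ plus an affine piece in $x$, giving $\beta$-strong convexity in $x$ with respect to the weighted inner product. Since $8\mu^2\beta^2 - 4\mu\beta + 1 \geq 0$ for every $\mu,\beta>0$ (its discriminant equals $-16\mu^2<0$), one checks $\beta \geq \tfrac{1}{2\mu}(1-\tfrac{1}{4\mu\beta})$, so the weaker constant in the lemma follows immediately.

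For claim (i), the plan is to write the increment $\tilde{\mathcal{L}}^k_{\beta,\mu}(V,h,x) - \tilde{\mathcal{L}}^k_{\beta,\mu}(\tilde V^k,\tilde h^k,\tilde x^k)$ as the pure quadratic $\tfrac{1}{2}\sum_{s,a} w(s,a)\,(\Delta x(s,a),\Delta \tilde x(s,a))\,M\,(\Delta x(s,a),\Delta \tilde x(s,a))^{\!\top}$ around any point of $\mathcal{X}^*$, where $\Delta \tilde x$ is the image of $(\Delta V,\Delta h)$ under the affine map above and the linear-in-$V$ term vanishes by optimality on $\mathcal{X}^*$. The pointwise eigenvalue estimate then gives a lower bound of $\tfrac{1}{2\mu}(1-\tfrac{1}{4\mu\beta})\cdot (\|\Delta x\|_w^2 + \|\Delta \tilde x\|_w^2)$. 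The expected hard part is the final translation of this quantity into $\mathrm{dist}((V,h,x),\mathcal{X}^*)^2$: one has to control $\|\Delta V\|_w^2 + \|\Delta h\|_w^2$ by $\|\Delta \tilde x\|_w^2$ on the orthogonal complement of the kernel of the map $(V,h)\mapsto \tilde x$. This is precisely why the statement must be \emph{restricted} strong convexity, and the constant $c$ absorbs both $\tfrac{1}{2\mu}(1-\tfrac{1}{4\mu\beta})$ and the smallest nonzero singular value of that linear map; restricting to the orthogonal complement of its kernel (which is absorbed into $\mathcal{X}^*$) keeps the bound positive.
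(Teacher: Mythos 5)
Your proposal is correct and follows essentially the same route as the paper: both exploit that $\tilde{\mathcal{L}}^k_{\beta,\mu}$ is quadratic (constant Hessian gives the $L$-Lipschitz gradient), reduce convexity to the positive (semi)definiteness of the pointwise $2\times 2$ form in $(x, Z_\mu)$, extract the same constant $\frac{1}{2\mu}(1-\frac{1}{4\mu\beta})$, and let $c$ be the smallest positive eigenvalue of the Hessian to get restricted strong convexity relative to $\mathcal{X}^*$. The only cosmetic differences are that the paper obtains the constant by completing the square whereas you use the bound $\lambda_{\min}(M) \geq \det(M)/\mathrm{tr}(M)$, and your part (ii) argues $\beta$-strong convexity in $x$ directly and then weakens the constant — equivalent computations in substance.
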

\begin{proof}
    The function $\tilde{\mathcal{L}}^k_{\beta,\mu}(V,h,x)$ is a quadratic function. Hence, the gradient of $\tilde{\mathcal{L}}^k_{\beta,\mu}(V,h,x)$ is $L$-Lipschitz continuous. To show the convexity of $\tilde{\mathcal{L}}^k_{\beta,\mu}(V,h,x)$, we only need to show that the quadratic part is nonnegtive.  If $\beta > \frac{1}{4\mu}$, by rearranging the quadratic term in $\tilde{\mathcal{L}}^k_{\beta,\mu}(V,h,x)$, we obtain
    \bee
 \begin{split}
& \frac{1}{2\mu}\sum_{s,a}w(s,a)x(s,a)Z_\mu(V,h,x_{\theta^{k}},s,a)
+\frac{\beta}{2}\sum_{s,a} w(s,a)\left[x(s,a) - \left(Z_\mu(V,h,x_{\theta^{k}},s,a)\right)\right]^2 \\
=& \frac{\beta}{2}\sum_{s,a}w(s,a) \left\{ \left[(1-\frac{1}{2\mu \beta})x(s,a) -  Z_\mu(V,h,x_{\theta^{k}},s,a) \right]^2 + \frac{1}{4\mu\beta}(1-\frac{1}{4\mu\beta})x(s,a)^2 \right\} \\
\geq & \frac{1}{2\mu}(1-\frac{1}{4\mu\beta})\sum_{s,a}w(s,a)x(s,a)^2 \geq 0.
\end{split}
\eee
Hence, $\tilde{\mathcal{L}}^k_{\beta,\mu}(V,h,x)$ is restricted strongly convex and $c$ is the smallest positive eigenvalue of the Hessian of $\tilde{\mathcal{L}}^k_{\beta,\mu}(V,h,x)$. It also implies that $\tilde{\mathcal{L}}^k_{\beta,\mu}(V,h,x)$ is $\frac{1}{2\mu}(1-\frac{1}{4\mu\beta})$-strongly convex with respect to $x$.
\end{proof}

We make the following assumption about the optimal points $\tilde{V}^k, \tilde{h}^k, \tilde{x}^k$ and the distribution $w(s,a)$. 
 
\begin{assumption}\label{assum:fc}
    \begin{enumerate}[(i)]
    \item It holds that $\tilde{V}^k \in \mathcal{F}^V_{R_V, m_V}, \tilde{h}^k \in \mathcal{F}^h_{R_h, m_h}, \tilde{x}^k \in \mathcal{F}^x_{R_x, m_x}$ for any $k$.
    \item For any unit vector $u, v$ and nonnegtive scalar $\tau$, $E_w 1_{\{|u^T(s,a)| < \tau\}} \leq c\tau$ and $E_w 1_{\{|v^Ts| < \tau\}} \leq c\tau$ holds, where $c$ is a constant and $E_w f = \sum_{s,a} w(s,a)f(s,a)$.  
\end{enumerate}
\end{assumption}
Assumption \ref{assum:fc} (i) is commonly used in the literature and Assumption \ref{assum:fc} (ii) is on the regularity of the distribution $w$. For convenience, let $\zeta = (\phi, \psi, \theta)$, $\Gamma = (V,h,x)$, $R = \max\{R_V, R_h, R_x\}$ and $m = \min\{m_V, m_h, m_x\}$. The following method is used to minimize \eqref{comp-func}.
Let $\zeta^{k,0} = \zeta^{k}$ and 
\be\label{eq:pjg}
\begin{split}
    \zeta^{k,t+1} &= \Pi_{\zeta \in \Xi}\left(\zeta^{k,t} - \alpha_t \nabla_\zeta \tilde{\mathcal{L}}^k_{\beta,\mu}(V_{\phi^{k,t}},h_{\psi^{k,t}},x_{\theta^{k,t}} )\right), \\
    \zeta^{k+1} &= \frac{1}{T}\sum_{t=1}^T \zeta^{k,t}.
\end{split}
\ee
The scheme \eqref{eq:pjg} is actually a projected gradient descent method. Comparing with the algorithm in Section \ref{The Augmented Lagrangian Method}, the scheme \eqref{eq:pjg} has an additional projection and average operator, which is easy  for analysis. We also ignore the stochastic part, which is not hard to analyze by the method in \cite{liu2019neural}.
In the following part, we show the convergence of the scheme \eqref{eq:pjg}. Define the linearization function
\beaa
V_{\phi}^0 (s) &=& \frac{1}{\sqrt{m_V}} \sum_{i=1}^{m_V} \phi_{1,i} \cdot 1_{\{(\phi_{2,i}^0)^Ts \geq 0 \}} \phi_{2,i}^Ts, \\ 
h_{\psi}^0 (s,a) &=& \frac{1}{\sqrt{m_h}} \sum_{i=1}^{m_h} \psi_{1,i} \cdot 1_{\{(\psi_{2,i}^0)^T(s,a) \geq 0 \}} \psi_{2,i}^T(s,a), \\
x_{\theta}^0 (s,a) &=& \frac{1}{\sqrt{m_x}} \sum_{i=1}^{m_x} \theta_{1,i} \cdot 1_{\{(\theta_{2,i}^0)^T(s,a) \geq 0 \}} \theta_{2,i}^T(s,a).
\eeaa
The following lemma gives the local linearization property. 
\begin{lemma}\label{lem:lr}
    Suppose that Assumption \ref{assum:fc} holds and $\zeta \in \Xi$. 
    Then we have the following estimation:
    \beaa
    E_{0,w}(V_{\phi}(s) - V_{\phi}^0(s))^2 &=& O\left(R_V^3m_V^{-1/2}\right), \\
    E_{0,w}(h_{\psi}(s,a) - h_{\psi}^0(s.a))^2 &=& O\left(R_h^3m_h^{-1/2}\right),\\
    E_{0,w}(x_{\theta}(s,a) - x_{\theta}^0(s,a))^2 &=& O\left(R_x^3m_x^{-1/2}\right), \\
     E_{0}\left| \tilde{\mathcal{L}}^k_{\beta,\mu}(V_{\phi},h_{\psi},x_{\theta}) -  \tilde{\mathcal{L}}^k_{\beta,\mu}(V_{\phi}^0,h_{\psi}^0,x_{\theta}^0)\right| &=& O\left(R^3m^{-1/2} + R^2m^{-1/4}\right), \\ 
     E_{0}\|\nabla_\zeta \tilde{\mathcal{L}}^k_{\beta,\mu}(V_{\phi},h_{\psi},x_{\theta}) - \nabla_\zeta \tilde{\mathcal{L}}^k_{\beta,\mu}(V_{\phi}^0,h_{\psi}^0,x_{\theta}^0)\|_2^2 &=& O\left(R^3m^{-1/2}\right),
     \eeaa
     where $E_{0,w}$ represent the expectation under the distribution \eqref{eq:dist} and $w$.
\end{lemma}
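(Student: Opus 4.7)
The plan is to exploit the standard ReLU linearization pattern underlying neural tangent kernel analysis: the perturbed network $V_\phi$ and its linearization $V_\phi^0$ differ in the $i$-th neuron only on the \emph{sign-change event} $E_i=\{(s,a):\operatorname{sign}(\phi_{2,i}^Ts)\ne\operatorname{sign}((\phi_{2,i}^0)^Ts)\}$, and on $E_i$ one has $|(\phi_{2,i}^0)^Ts|\le \delta_i:=\|\phi_{2,i}-\phi_{2,i}^0\|$. The five estimates are then reduced to controlling $\mathbb{E}_w \mathbf{1}_{E_i}$ and combining it with the constraint $\sum_i \delta_i^2\le R_V^2$ imposed by $\zeta\in\Xi$.

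For the first three bounds I would first establish the pointwise inequality $|V_\phi(s)-V_\phi^0(s)|\le \frac{2}{\sqrt{m_V}}\sum_i \delta_i\mathbf{1}_{E_i}$, then square and take the expectation. Writing $(\sum_i \delta_i\mathbf{1}_{E_i})^2 = \sum_{i,j}\delta_i\delta_j\mathbf{1}_{E_i\cap E_j}$ and applying Cauchy--Schwarz to $\mathbb{E}_w\mathbf{1}_{E_i\cap E_j}\le (\mathbb{E}_w\mathbf{1}_{E_i})^{1/2}(\mathbb{E}_w\mathbf{1}_{E_j})^{1/2}$, Assumption \ref{assum:fc}(ii) applied to the unit vector $\phi_{2,i}^0/\|\phi_{2,i}^0\|$ yields $\mathbb{E}_w\mathbf{1}_{E_i}\le c\,\delta_i/\|\phi_{2,i}^0\|$. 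Using the concentration of $\|\phi_{2,i}^0\|$ around $1$ under \eqref{eq:dist} (so $\mathbb{E}_0 \|\phi_{2,i}^0\|^{-1}=O(1)$), one obtains
\begin{equation*}
\mathbb{E}_{0,w}|V_\phi-V_\phi^0|^2 \;\le\; \frac{4c}{m_V}\Big(\sum_i \delta_i^{3/2}\Big)^2.
\end{equation*}
Finally, Hölder's inequality with exponents $(4/3,4)$ gives $\sum_i \delta_i^{3/2}\le (\sum_i \delta_i^2)^{3/4}m_V^{1/4}\le R_V^{3/2}m_V^{1/4}$, producing the claimed $O(R_V^3 m_V^{-1/2})$ bound. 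The same template works verbatim for $h_\psi$ and $x_\theta$ since only the dimensions of $(s,a)$ vs.\ $s$ change.

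For the gradient bound I would compute the blockwise representation $\nabla_{\phi_{2,i}}V_\phi(s)=\frac{1}{\sqrt{m_V}}\phi_{1,i}\mathbf{1}_{\{\phi_{2,i}^Ts\ge 0\}}s$ and similarly for the linearization, so that $\|\nabla_\phi V_\phi(s)-\nabla_\phi V_\phi^0(s)\|^2=\frac{1}{m_V}\sum_i\mathbf{1}_{E_i}\|s\|^2\le \frac{1}{m_V}\sum_i \mathbf{1}_{E_i}$. Propagating this through the chain rule on $\tilde{\mathcal{L}}^k_{\beta,\mu}$, the derivative factors $\partial \tilde{\mathcal{L}}^k_{\beta,\mu}/\partial V$, $\partial/\partial h$, $\partial/\partial x$ are uniformly bounded on $\Xi$ by \eqref{eq:bv} and the uniform gradient bounds stated before the lemma, so the difference $\|\nabla_\zeta\tilde{\mathcal{L}}^k_{\beta,\mu}(V_\phi,h_\psi,x_\theta)-\nabla_\zeta\tilde{\mathcal{L}}^k_{\beta,\mu}(V_\phi^0,h_\psi^0,x_\theta^0)\|_2^2$ is controlled by $\frac{1}{m}\sum_i \mathbb{E}_w\mathbf{1}_{E_i}\le \frac{c}{m}\sum_i \delta_i$, and Cauchy--Schwarz applied to $\sum_i \delta_i\le \sqrt{m}\,R$ together with the additional $R^2$ saved from the outer derivative factors yields the $O(R^3m^{-1/2})$ rate.

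The Lagrangian bound is a direct consequence: $\tilde{\mathcal{L}}^k_{\beta,\mu}$ is quadratic in $(V,h,x)$ with coefficients determined by $x_{\theta^k}$, so the difference $|\tilde{\mathcal{L}}^k_{\beta,\mu}(V_\phi,h_\psi,x_\theta)-\tilde{\mathcal{L}}^k_{\beta,\mu}(V_\phi^0,h_\psi^0,x_\theta^0)|$ splits into linear and quadratic pieces. The linear piece $\sum_s \rho_0(s)(V_\phi(s)-V_\phi^0(s))$ is bounded by $\|V_\phi-V_\phi^0\|_{L^2(\rho_0)}=O(R^{3/2}m^{-1/4})$ after a Cauchy--Schwarz, giving the $R^2m^{-1/4}$ term (with an extra $R^{1/2}$ from bounding $\rho_0$-mass by the $w$-mass via a standard equivalence or by direct estimation). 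The quadratic piece is handled by the identity $AB-A^0B^0=(A-A^0)B+A^0(B-B^0)$, combining one bounded factor $O(R^{1/2})$ from \eqref{eq:bv} with one $L^2$ factor $O(R^{3/2}m^{-1/4})$, producing $O(R^3m^{-1/2})$. The main obstacle I anticipate is the careful accounting in this last composition step, as well as justifying the mild dependence on $\|\phi_{2,i}^0\|^{-1}$ in the application of Assumption \ref{assum:fc}(ii); this requires showing that $\|\phi_{2,i}^0\|$ is bounded away from zero with sufficient probability under the Gaussian initialization, which follows from standard concentration for chi-distributed random variables.
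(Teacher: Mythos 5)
Your proposal is correct and follows essentially the same route as the paper: the paper delegates the first three estimates and the Jacobian difference $E_0\|\nabla_\zeta(V_{\phi},h_{\psi},x_{\theta})-\nabla_\zeta(V_{\phi}^0,h_{\psi}^0,x_{\theta}^0)\|_*^2=O(Rm^{-1/2})$ to Lemmas D.1--D.2 of \cite{liu2019neural}, which is precisely the sign-flip/Assumption~\ref{assum:fc}(ii) argument you spell out, and then composes them using the $L$-Lipschitz gradient of the quadratic $\tilde{\mathcal{L}}^k_{\beta,\mu}$, the bound $E_0\|\nabla_\Gamma\tilde{\mathcal{L}}^k_{\beta,\mu}(V_{\phi}^0,h_{\psi}^0,x_{\theta}^0)\|_{\mathcal{H}}^2=O(R)$, and Cauchy--Schwarz, just as your linear/bilinear expansion does. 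The only cosmetic difference is that the paper's gradient bound explicitly splits the chain-rule difference into $J^T\bigl(\nabla_\Gamma\tilde{\mathcal{L}}(\Gamma)-\nabla_\Gamma\tilde{\mathcal{L}}(\Gamma^0)\bigr)$ and $(J-J^0)^T\nabla_\Gamma\tilde{\mathcal{L}}(\Gamma^0)$, whereas your sketch emphasizes mainly the second term; both pieces are controlled at the stated rate by the estimates you already have.
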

\begin{proof}
The first three estimations follows from \cite[Lemma D.1]{liu2019neural}. The
main difference is that $\psi_{1,i}$ is always equal to $1$, but it does not change the arguments.  By Lemma \ref{lem:cvx}, it yields
\be\label{eq:est-g0}
\begin{split}
&  \| \nabla_{\Gamma}\tilde{\mathcal{L}}^k_{\beta,\mu}(V_{\phi}^0,h_{\psi}^0,x_{\theta}^0)\|_{\mathcal{H}}^2 \\
\leq &  2\| \nabla_{\Gamma}\tilde{\mathcal{L}}^k_{\beta,\mu}(V_{\phi^0},h_{\psi^0},x_{\theta^0})\|_{\mathcal{H}}^2
+ 2\| \nabla_{\Gamma}\tilde{\mathcal{L}}^k_{\beta,\mu}(V_{\phi}^0,h_{\psi}^0,x_{\theta}^0) - \nabla_{\Gamma}\tilde{\mathcal{L}}^k_{\beta,\mu}(V_{\phi^0},h_{\psi^0},x_{\theta^0})\|_{\mathcal{H}}^2 \\
\leq & 2\| \nabla_{\Gamma}\tilde{\mathcal{L}}^k_{\beta,\mu}(V_{\phi^0},h_{\psi^0},x_{\theta^0})\|_{\mathcal{H}}^2+ 
2L\| (V_{\phi}^0,h_{\psi}^0,x_{\theta}^0) - (V_{\phi^0},h_{\psi^0},x_{\theta^0})\|_{\mathcal{H}}^2 \\
\leq & 4L\| (V_{\phi^0},h_{\psi^0},x_{\theta^0})\|_{\mathcal{H}}^2+ 
4\| \nabla_{\Gamma}\tilde{\mathcal{L}}^k_{\beta,\mu}(0,0,0)\|_{\mathcal{H}}^2
+ 6LR,
\end{split}
\ee
where the last inequality uses the bound of $V, h, x$ in \eqref{eq:bv}.
It implies
\[
E_0  \| \nabla_{\Gamma}\tilde{\mathcal{L}}^k_{\beta,\mu}(V_{\phi}^0,h_{\psi}^0,x_{\theta}^0)\|_{\mathcal{H}}^2 = O\left(R\right).
\]
It follows from Lemma \ref{lem:cvx} and \cite[Theorem 18.15]{bauschke2011convex} that
\beaa
&& E_{0}\left| \tilde{\mathcal{L}}^k_{\beta,\mu}(V_{\phi},h_{\psi},x_{\theta}) -  \tilde{\mathcal{L}}^k_{\beta,\mu}(V_{\phi}^0,h_{\psi}^0,x_{\theta}^0)\right| \\
&\leq & E_0\left| \iprod{ \nabla_\Gamma\tilde{\mathcal{L}}^k_{\beta,\mu}(V_{\phi}^0,h_{\psi}^0,x_{\theta}^0)}{(V_{\phi},h_{\psi},x_{\theta}) - (V_{\phi}^0,h_{\psi}^0,x_{\theta}^0)}_{\mathcal{H}}\right| \\
&& + \frac{L}{2}E_0  \|(V_{\phi},h_{\psi},x_{\theta}) - (V_{\phi}^0,h_{\psi}^0,x_{\theta}^0)\|^2_{\mathcal{H}} \\
&\leq & E_0 \| \nabla_\Gamma\tilde{\mathcal{L}}^k_{\beta,\mu}(V_{\phi}^0,h_{\psi}^0,x_{\theta}^0)\|\|(V_{\phi},h_{\psi},x_{\theta}) - (V_{\phi}^0,h_{\psi}^0,x_{\theta}^0)\|_{\mathcal{H}} \\
&&+ \frac{L}{2}E_0 \|(V_{\phi},h_{\psi},x_{\theta}) - (V_{\phi}^0,h_{\psi}^0,x_{\theta}^0)\|^2_{\mathcal{H}} \\
&\leq &  \left(E_0\| \nabla_\Gamma\tilde{\mathcal{L}}^k_{\beta,\mu}(V_{\phi}^0,h_{\psi}^0,x_{\theta}^0)\|^2_{\mathcal{H}} \right)^{\frac{1}{2}}
\left( E_0\|(V_{\phi},h_{\psi},x_{\theta}) - (V_{\phi}^0,h_{\psi}^0,x_{\theta}^0)\|^2_{\mathcal{H}} \right)^{\frac{1}{2}}\\
&&+ \frac{L}{2}E_0 \|(V_{\phi},h_{\psi},x_{\theta}) - (V_{\phi}^0,h_{\psi}^0,x_{\theta}^0)\|^2_{\mathcal{H}} \\
&=& O\left(R^3m^{-1/2} + R^2m^{-1/4}\right).
\eeaa
We next prove the last estimation. We can obtain
\beaa
 && E_{0}\|\nabla_\zeta \tilde{\mathcal{L}}^k_{\beta,\mu}(V_{\phi},h_{\psi},x_{\theta}) - \nabla_\zeta \tilde{\mathcal{L}}^k_{\beta,\mu}(V_{\phi}^0,h_{\psi}^0,x_{\theta}^0)\|_2^2  \\
 &\leq & 2E_{0}\|\nabla_\zeta(V_{\phi},h_{\psi},x_{\theta})^T\cdot \left( \nabla_\Gamma \tilde{\mathcal{L}}^k_{\beta,\mu}(V_{\phi},h_{\psi},x_{\theta}) 
 - \nabla_\Gamma \tilde{\mathcal{L}}^k_{\beta,\mu}(V_{\phi}^0,h_{\psi}^0,x_{\theta}^0) \right)\|_2^2  \\
 && + 2E_{0}\|\left( \nabla_\zeta(V_{\phi},h_{\psi},x_{\theta}) 
 - \nabla_\zeta(V_{\phi}^0,h_{\psi}^0,x_{\theta}^0) \right)^T\cdot\nabla_\Gamma \tilde{\mathcal{L}}^k_{\beta,\mu}(V_{\phi}^0,h_{\psi}^0,x_{\theta}^0)\|_2^2  \\
  &\leq & 2E_{0}\|\nabla_\zeta(V_{\phi},h_{\psi},x_{\theta})\|_*^2
  \|\nabla_\Gamma \tilde{\mathcal{L}}^k_{\beta,\mu}(V_{\phi},h_{\psi},x_{\theta}) 
 - \nabla_\Gamma \tilde{\mathcal{L}}^k_{\beta,\mu}(V_{\phi}^0,h_{\psi}^0,x_{\theta}^0)\|_\mathcal{H}^2  \\
 && + 2E_{0}\|\nabla_\zeta(V_{\phi},h_{\psi},x_{\theta}) - \nabla_\zeta(V_{\phi}^0,h_{\psi}^0,x_{\theta}^0)\|_*^2
 \|\nabla_\Gamma \tilde{\mathcal{L}}^k_{\beta,\mu}(V_{\phi}^0,h_{\psi}^0,x_{\theta}^0)\|_\mathcal{H}^2,  \\
 &:=& I_1 + I_2,
\eeaa
where the second inequality is due to the triangle inequality and the H\"older inequality and $\|\cdot\|_*$ is defined by
\be\label{eq:est-g1}
\|\nabla_\zeta(V_{\phi},h_{\psi},x_{\theta})\|_*^2 = \sum_{s,a}w(s,a)\left[ \|\nabla_\phi V_\phi(s)\|^2 
+ \|\nabla_\psi h_\psi(s)\|^2 + \|\nabla_\theta x_\theta(s)\|^2\right] \leq 3.
\ee
Using the relationship \eqref{eq:est-g1}, we get 
\beaa
I_1 &\leq& 6E_{0}\|\nabla_\Gamma \tilde{\mathcal{L}}^k_{\beta,\mu}(V_{\phi},h_{\psi},x_{\theta}) 
 - \nabla_\Gamma \tilde{\mathcal{L}}^k_{\beta,\mu}(V_{\phi}^0,h_{\psi}^0,x_{\theta}^0)\|_\mathcal{H}^2 \\
 &\leq& 6L E_{0}\|(V_{\phi},h_{\psi},x_{\theta})  - (V_{\phi}^0,h_{\psi}^0,x_{\theta}^0)\|_\mathcal{H}^2 
 =  O\left(R^3m^{-1/2}\right).
\eeaa
According to the arguments in  \cite[Lemma D.2]{liu2019neural}, we have
\be\label{eq:est-g2}
\begin{split}
&E_{0}\|\nabla_\zeta(V_{\phi},h_{\psi},x_{\theta}) - \nabla_\zeta(V_{\phi}^0,h_{\psi}^0,x_{\theta}^0)\|_*^2
=  O\left(Rm^{-1/2}\right), \\
&E_{0}\|\nabla_\zeta(V_{\phi},h_{\psi},x_{\theta}) - \nabla_\zeta(V_{\phi}^0,h_{\psi}^0,x_{\theta}^0)\|_*^2
\| (V_{\phi^0},h_{\psi^0},x_{\theta^0})\|_{\mathcal{H}}^2 
=  O\left(Rm^{-1/2}\right).
\end{split}
\ee
Combining \eqref{eq:est-g0} and \eqref{eq:est-g2} yields
\beaa
I_2 
 =  O\left(R^2m^{-1/2}\right).
\eeaa
 which completes the proof.
\end{proof}

The next lemma gives  the convergence of the scheme \eqref{eq:pjg}. 
\begin{lemma}\label{lem:conv1}
    Suppose that Assumption \ref{assum:fc} holds and $\beta > \frac{1}{4\mu}$. Let  $\zeta^{k,t} = (\phi^{k,t}, \psi^{k,t}, \theta^{k,t})$ be a sequence generated by the projected gradient descent method \eqref{eq:pjg} with $\alpha_t = \alpha \leq \frac{1}{\sqrt{3}L}$.
    Then we have
    \be\label{eq:conv1}
    \begin{split}
   & E_0\left(\tilde{\mathcal{L}}^k_{\beta,\mu}(V_{\phi^{k+1}},h_{\psi^{k+1}},x_{\theta^{k+1}}) - \tilde{\mathcal{L}}^k_{\beta,\mu}(\tilde{V}^{k+1}, \tilde{h}^{k+1}, \tilde{x}^{k+1})\right) \\
    = &O\left(R^2T^{-1} + R^{5/2}m^{-1/4} + R^{3}m^{-1/2}\right).
    \end{split}
    \ee
    Furthermore, it holds that
    \be\label{eq:conv2}
    E_{0,w}(x_{\theta^{k+1}}(s,a) - \tilde{x}^{k+1}(s,a))^2 = O\left(R^2T^{-1} + R^{5/2}m^{-1/4} + R^{3}m^{-1/2}\right). 
    \ee
\end{lemma}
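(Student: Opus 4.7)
The plan is to compare the projected gradient descent \eqref{eq:pjg} with an idealized iteration performed on the linearized surrogate
\[
\mathcal{S}^k(\zeta) := \tilde{\mathcal{L}}^k_{\beta,\mu}(V_\phi^0, h_\psi^0, x_\theta^0),
\]
and then transfer the result back to the original objective via Lemma \ref{lem:lr}. Since $\zeta \mapsto (V_\phi^0, h_\psi^0, x_\theta^0)$ is affine in the trainable components and $\tilde{\mathcal{L}}^k_{\beta,\mu}$ is convex in $(V,h,x)$ with $L$-Lipschitz gradient by Lemma \ref{lem:cvx}(i), the surrogate $\mathcal{S}^k$ is convex on $\Xi$ with a Lipschitz continuous gradient. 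Assumption \ref{assum:fc}(i) guarantees that $(\tilde V^{k+1}, \tilde h^{k+1}, \tilde x^{k+1})$ lies in $\mathcal{F}^V_{R_V,m_V} \times \mathcal{F}^h_{R_h,m_h} \times \mathcal{F}^x_{R_x,m_x}$, so there exists $\tilde\zeta^{k+1} \in \Xi$ realizing the constrained minimum of $\mathcal{S}^k$.

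The iteration \eqref{eq:pjg} is then seen as an inexact projected gradient descent on the smooth convex $\mathcal{S}^k$. Writing
\[
g^{k,t} := \nabla_\zeta \tilde{\mathcal{L}}^k_{\beta,\mu}(V_{\phi^{k,t}}, h_{\psi^{k,t}}, x_{\theta^{k,t}}) = \nabla \mathcal{S}^k(\zeta^{k,t}) + \delta^{k,t},
\]
the last bound in Lemma \ref{lem:lr} gives $E_0\|\delta^{k,t}\|^2 = O(R^3 m^{-1/2})$. Standard manipulations, i.e., expanding $\|\zeta^{k,t+1} - \tilde\zeta^{k+1}\|^2 \le \|\zeta^{k,t} - \alpha g^{k,t} - \tilde\zeta^{k+1}\|^2$ by nonexpansiveness of the projection onto $\Xi$, invoking convexity of $\mathcal{S}^k$ together with the uniform bound $\|g^{k,t}\|^2 = O(R)$ (which follows from \eqref{eq:est-g1}, \eqref{eq:est-g0}, and the Lipschitz smoothness), telescoping over $t=0,\ldots,T-1$, and finally applying Jensen's inequality to $\zeta^{k+1} = \frac{1}{T}\sum_t \zeta^{k,t}$, yield
\[
E_0[\mathcal{S}^k(\zeta^{k+1}) - \mathcal{S}^k(\tilde\zeta^{k+1})] \le \frac{E_0\|\zeta^{k,0}-\tilde\zeta^{k+1}\|^2}{2\alpha T} + \frac{1}{T}\sum_{t=0}^{T-1} E_0\bigl|\langle \delta^{k,t}, \zeta^{k,t}-\tilde\zeta^{k+1}\rangle\bigr|.
\]
Since $\zeta^{k,0}, \tilde\zeta^{k+1} \in \Xi$, the first summand is $O(R^2/T)$. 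A Cauchy--Schwarz bound on each cross term yields $E_0|\langle \delta^{k,t}, \zeta^{k,t}-\tilde\zeta^{k+1}\rangle| \le (E_0\|\delta^{k,t}\|^2)^{1/2}(E_0\|\zeta^{k,t}-\tilde\zeta^{k+1}\|^2)^{1/2} = O(R^{3/2}m^{-1/4}\cdot R) = O(R^{5/2}m^{-1/4})$, so altogether $E_0[\mathcal{S}^k(\zeta^{k+1})-\mathcal{S}^k(\tilde\zeta^{k+1})] = O(R^2/T + R^{5/2}m^{-1/4})$.

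Applying the fourth estimate of Lemma \ref{lem:lr} to both $\zeta^{k+1}$ and $\tilde\zeta^{k+1}$ replaces $\mathcal{S}^k$ by $\tilde{\mathcal{L}}^k_{\beta,\mu}$ at the cost of an additive $O(R^3 m^{-1/2} + R^2 m^{-1/4})$; absorbing $R^2 m^{-1/4}$ into the dominant $R^{5/2}m^{-1/4}$ produces \eqref{eq:conv1}. For \eqref{eq:conv2}, I would invoke Lemma \ref{lem:cvx}(ii), which asserts the $\frac{1}{2\mu}(1-\frac{1}{4\mu\beta})$-strong convexity of $\tilde{\mathcal{L}}^k_{\beta,\mu}$ in $x$ in the weighted $L^2(w)$ norm: combined with the restricted strong convexity in $(V,h,x)$ from Lemma \ref{lem:cvx}(i) (which forces uniqueness of the $x$-component of the optimum), this bounds $E_{0,w}(x_{\theta^{k+1}}-\tilde x^{k+1})^2$ by a constant multiple of the value gap already established, giving \eqref{eq:conv2}.

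The main obstacle is tracking the linearization error through the inexact projected gradient iteration tightly enough to recover the rate $O(R^{5/2}m^{-1/4})$ from the perturbation term: a coarser Young-type split of $\langle \delta^{k,t}, \zeta^{k,t}-\tilde\zeta^{k+1}\rangle$ would either degrade the dependence on $R$ and $m$ or fail to align with the $O(R^2/T)$ main term. A secondary technical nuisance is that Lemma \ref{lem:lr} furnishes bounds in expectation over the random initialization, so one must take expectations before summing the one-step inequalities to apply these bounds inside the recursion defined for a fixed realization; this is standard but should be made explicit.
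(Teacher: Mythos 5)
Your proposal is correct and follows essentially the same route as the paper's proof: analyze the iteration as an (inexact) projected gradient method on the linearized surrogate $\tilde{\mathcal{L}}^k_{\beta,\mu}(V_\phi^0,h_\psi^0,x_\theta^0)$, which is convex with Lipschitz gradient, bound the perturbation from the gradient mismatch via Cauchy--Schwarz and the last estimate of Lemma \ref{lem:lr}, telescope and apply Jensen to the averaged iterate, transfer back to the true objective with the fourth estimate of Lemma \ref{lem:lr}, and finally obtain \eqref{eq:conv2} from the restricted strong convexity of Lemma \ref{lem:cvx}(i) together with the uniqueness of the $x$-component from Lemma \ref{lem:cvx}(ii). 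The only differences are cosmetic (you expand the squared distance after projection rather than invoking the variational characterization of the projection as in \eqref{eq:cvx3}, and you make the realizability of $(\tilde V^{k+1},\tilde h^{k+1},\tilde x^{k+1})$ inside $\Xi$ via Assumption \ref{assum:fc}(i) explicit), so no further comparison is needed.
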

\begin{proof}
    Since $V_{\phi}^0 (s), h_{\psi}^0 (s,a), x_{\theta}^0 (s,a)$ is linear with respect to $\phi,\psi,\theta$, respectively, $\tilde{\mathcal{L}}^k_{\beta,\mu}(V_{\phi}^0,h_{\psi}^0,x_{\theta}^0)$ is convex for $\zeta$ by Lemma \ref{lem:cvx}. We also have
    \be
    \begin{split}
   &  \| \nabla_\zeta \tilde{\mathcal{L}}^k_{\beta,\mu}(V_{\phi}^0,h_{\psi}^0,x_{\theta}^0) - \nabla_\zeta \tilde{\mathcal{L}}^k_{\beta,\mu}(V_{\phi'}^0,h_{\psi'}^0,x_{\theta'}^0)  \|^2 \\
    \leq & \|\nabla_\zeta(V_{\phi}^0,h_{\psi}^0,x_{\theta}^0)^T\cdot 
    \left( \nabla_\Gamma \tilde{\mathcal{L}}^k_{\beta,\mu}(V_{\phi}^0,h_{\psi}^0,x_{\theta}^0) 
 - \nabla_\Gamma \tilde{\mathcal{L}}^k_{\beta,\mu}(V_{\phi'}^0,h_{\psi'}^0,x_{\theta'}^0) \right)\|^2 \\
     \leq &\|\nabla_\zeta(V_{\phi}^0,h_{\psi}^0,x_{\theta}^0) \|_*^2
     \| \nabla_\Gamma \tilde{\mathcal{L}}^k_{\beta,\mu}(V_{\phi}^0,h_{\psi}^0,x_{\theta}^0) 
 - \nabla_\Gamma \tilde{\mathcal{L}}^k_{\beta,\mu}(V_{\phi'}^0,h_{\psi'}^0,x_{\theta'}^0) \|_\mathcal{H}^2 \\
 \leq & 3L^2 \| (V_{\phi}^0,h_{\psi}^0,x_{\theta}^0) 
 - (V_{\phi'}^0,h_{\psi'}^0,x_{\theta'}^0) \|_\mathcal{H}^2 \leq 3L^2 \| \zeta - \zeta' \|_2^2.
    \end{split}
    \ee
    Hence, $\tilde{\mathcal{L}}^k_{\beta,\mu}(V_{\phi}^0,h_{\psi}^0,x_{\theta}^0)$ has  $\sqrt{3}L$-Lipschitz continuous gradient. According to \cite[Theorem 18.15]{bauschke2011convex}, it implies 
\be\label{eq:cvx1}
\begin{split}
&\tilde{\mathcal{L}}^k_{\beta,\mu}(V_{\phi^{k,t+1}}^0,h_{\psi^{k,t+1}}^0,x_{\theta^{k,t+1}}^0) - \tilde{\mathcal{L}}^k_{\beta,\mu}(V_{\phi^{k,t}}^0,h_{\psi^{k,t}}^0,x_{\theta^{k,t}}^0) \\
\leq & \nabla_\zeta \tilde{\mathcal{L}}^k_{\beta,\mu}(V_{\phi^{k,t}}^0,h_{\psi^{k,t}}^0,x_{\theta^{k,t}}^0)^T(\zeta^{k,t+1} - \zeta^{k,t} ) + \frac{\sqrt{3}L}{2}\|\zeta^{k,t+1} - \zeta^{k,t} \|_2^2.
\end{split}
\ee
By the convexity of $\tilde{\mathcal{L}}^k_{\beta,\mu}(V_{\phi}^0,h_{\psi}^0,x_{\theta}^0)$, we have
\be\label{eq:cvx2}
\begin{split}
    & \tilde{\mathcal{L}}^k_{\beta,\mu}(V_{\phi^{k,t}}^0,h_{\psi^{k,t}}^0,x_{\theta^{k,t}}^0) - \tilde{\mathcal{L}}^k_{\beta,\mu}(V_{\phi^{*}}^0,h_{\psi^{*}}^0,x_{\theta^{*}}^0)  \\
\leq & \nabla_\zeta \tilde{\mathcal{L}}^k_{\beta,\mu}(V_{\phi^{k,t}}^0,h_{\psi^{k,t}}^0,x_{\theta^{k,t}}^0)^T(\zeta^{k,t} - \zeta^{*} ),
\end{split}
\ee
where $\zeta^* = (\phi^*,\psi^*,\theta^*)$ is an optimal solution, i.e., $(V_{\phi^{*}}^0,h_{\psi^{*}}^0,x_{\theta^{*}}^0) = (\tilde{V}^{k+1}, \tilde{h}^{k+1}, \tilde{x}^{k+1})$.
According to the property of projection operator, we get
\be\label{eq:cvx3}
({\zeta^{*} - \zeta^{k,t+1}})^T\left(\zeta^{k,t} - \alpha_t \nabla_\zeta \tilde{\mathcal{L}}^k_{\beta,\mu}(V_{\phi^{k,t}},h_{\psi^{k,t}},x_{\theta^{k,t}} ) - \zeta^{k,t+1}\right) \geq 0
\ee
Combining \eqref{eq:cvx1}, \eqref{eq:cvx2} and \eqref{eq:cvx3} yields
\be\label{eq:cvx4}
\begin{split}
&\tilde{\mathcal{L}}^k_{\beta,\mu}(V_{\phi^{k,t+1}}^0,h_{\psi^{k,t+1}}^0,x_{\theta^{k,t+1}}^0) - \tilde{\mathcal{L}}^k_{\beta,\mu}(V_{\phi^{*}}^0,h_{\psi^{*}}^0,x_{\theta^{*}}^0) \\
\leq & \nabla_\zeta \tilde{\mathcal{L}}^k_{\beta,\mu}(V_{\phi^{k,t}}^0,h_{\psi^{k,t}}^0,x_{\theta^{k,t}}^0)^T(\zeta^{k,t+1} - \zeta^{*} ) + \frac{\sqrt{3}L}{2}\|\zeta^{k,t+1} - \zeta^{k,t} \|_2^2 \\
\leq & \frac{1}{\alpha_t}(\zeta^{k,t}  - \zeta^{k,t+1})^T(\zeta^{k,t+1} - \zeta^{*} ) + \frac{\sqrt{3}L}{2}\|\zeta^{k,t+1} - \zeta^{k,t} \|_2^2 \\ 
&+ \left(\nabla_\zeta \tilde{\mathcal{L}}^k_{\beta,\mu}(V_{\phi^{k,t}}^0,h_{\psi^{k,t}}^0,x_{\theta^{k,t}}^0) -  \nabla_\zeta \tilde{\mathcal{L}}^k_{\beta,\mu}(V_{\phi^{k,t}},h_{\psi^{k,t}},x_{\theta^{k,t}} )  \right)^T(\zeta^{k,t+1} - \zeta^{*} ) \\
\leq& \frac{1}{\alpha}\left(\|\zeta^{k,t} - \zeta^{*}\|^2 - \|\zeta^{k,t+1} - \zeta^{*}\|^2\right) \\
&+ \|\nabla_\zeta \tilde{\mathcal{L}}^k_{\beta,\mu}(V_{\phi^{k,t}}^0,h_{\psi^{k,t}}^0,x_{\theta^{k,t}}^0) -  \nabla_\zeta \tilde{\mathcal{L}}^k_{\beta,\mu}(V_{\phi^{k,t}},h_{\psi^{k,t}},x_{\theta^{k,t}} )\|\|\zeta^{k,t+1} - \zeta^{*}\|.
\end{split}
\ee
By the convexity of $\tilde{\mathcal{L}}^k_{\beta,\mu}(V_{\phi^{k,t+1}}^0,h_{\psi^{k,t+1}}^0,x_{\theta^{k,t+1}}^0)$ and Lemma \ref{lem:lr}, we obtain
\bee
    \begin{split}
     & E_0\left(\tilde{\mathcal{L}}^k_{\beta,\mu}(V_{\phi^{k+1}}^0,h_{\psi^{k+1}}^0,x_{\theta^{k+1}}^0) - \tilde{\mathcal{L}}^k_{\beta,\mu}(\tilde{V}^{k+1}, \tilde{h}^{k+1}, \tilde{x}^{k+1})\right) \\
     \leq & \frac{1}{T} \sum_{t=1}^T E_0\left(\tilde{\mathcal{L}}^k_{\beta,\mu}(V_{\phi^{k,t}}^0,h_{\psi^{k,t}}^0,x_{\theta^{k,t}}^0) - \tilde{\mathcal{L}}^k_{\beta,\mu}(\tilde{V}^{k+1}, \tilde{h}^{k+1}, \tilde{x}^{k+1})\right) \\
 \leq &  \frac{4R}{T} \sum_{t=1}^T \left( E_{0}\|\nabla_\zeta \tilde{\mathcal{L}}^k_{\beta,\mu}(V_{\phi^{k,t}}^0,h_{\psi^{k,t}}^0,x_{\theta^{k,t}}^0) -  \nabla_\zeta \tilde{\mathcal{L}}^k_{\beta,\mu}(V_{\phi^{k,t}},h_{\psi^{k,t}},x_{\theta^{k,t}} )\|^2\right)^{\frac{1}{2}}  \\
 &+ \frac{1}{\alpha T} \|\zeta^{k,0} - \zeta^{*}\|^2\\
 = & O\left(R^2T^{-1} + R^{5/2}m^{-1/4} \right),
\end{split}
\eee
where the second inequality is due to \eqref{eq:cvx4} and H\"older inequality.
It follows from Lemma \ref{lem:lr} that
\bee
    \begin{split}
&E_0\left(\tilde{\mathcal{L}}^k_{\beta,\mu}(V_{\phi^{k+1}},h_{\psi^{k+1}},x_{\theta^{k+1}}) - \tilde{\mathcal{L}}^k_{\beta,\mu}(\tilde{V}^{k+1}, \tilde{h}^{k+1}, \tilde{x}^{k+1})\right) \\
\leq & E_0\left(\tilde{\mathcal{L}}^k_{\beta,\mu}(V_{\phi^{k+1}}^0,h_{\psi^{k+1}}^0,x_{\theta^{k+1}}^0) - \tilde{\mathcal{L}}^k_{\beta,\mu}(\tilde{V}^{k+1}, \tilde{h}^{k+1}, \tilde{x}^{k+1})\right) \\
&+ E_0\left(\tilde{\mathcal{L}}^k_{\beta,\mu}(V_{\phi^{k+1}},h_{\psi^{k+1}},x_{\theta^{k+1}}) - \tilde{\mathcal{L}}^k_{\beta,\mu}(V_{\phi^{k+1}}^0,h_{\psi^{k+1}}^0,x_{\theta^{k+1}}^0)\right) \\
&= O\left(R^2T^{-1} + R^{5/2}m^{-1/4} + R^{3}m^{-1/2}\right),
\end{split}
\eee
which completes the proof of \eqref{eq:conv1}.

By Lemma \ref{lem:cvx} (i), we have
\bee
    E_0\dist\left(\left(V_{\phi^{k+1}},h_{\psi^{k+1}},x_{\theta^{k+1}}\right),\mathcal{X}^*\right)^2 = O\left(R^2T^{-1} + R^{5/2}m^{-1/4} + R^{3}m^{-1/2}\right).
\eee
It means that there exists an optimal solution $(V^*, h^*, x^*) \in \mathcal{X}^*$ such that
\bee
    E_{0,w}(x_{\theta^{k+1}}(s,a) - {x}^*(s,a))^2 = O\left(R^2T^{-1} + R^{5/2}m^{-1/4} + R^{3}m^{-1/2}\right).
\eee
By Lemma \ref{lem:cvx} (ii), $x^*$ is unique in $\mathcal{X}^*$, i.e. $x^* = \tilde{x}^{k+1}$, which completes the proof of \eqref{eq:conv2}.
\end{proof}

The next lemma shows an estimation about the error between the quadratic penalty method and augmented Lagrangian method.
\begin{lemma}\label{lem:err-2}
    Suppose that Assumption \ref{assum:fc} holds and $\beta > \frac{1}{4\mu}$.
    Define $\hat{V}^{k},\hat{h}^{k} = \arg\min_{h\geq 0,V}  \mathcal{L}_{\mu}(V,h, x_{\theta^k})$ and let 
    $
        \hat{x}^{k+1}(s,a) = [Z_\mu( \hat{V}^{k}, \hat{h}^k, x_{\theta^k},s,a)].
    $
    Then we have
    \bea
    Z_\mu(\tilde{V}^{k+1}, \tilde{h}^k, x_{\theta^{k}}, s,a) &=& \xi_1 Z_\mu( \hat{V}^{k+1}, \hat{h}^{k+1}, x_{\theta^k},s,a), \\
    \tilde{x}^{k+1} &=& \xi_2 \hat{x}^{k+1},
    \eea
    where $\xi_1 = \frac{4\mu\beta}{4\mu\beta-1}$ and $\xi_2 = \frac{4\mu\beta - 2}{4\mu\beta-1}$.
    Furthermore, it implies
    \[
        E_{0,w}(x_{\theta^{k+1}}(s,a) - \hat{x}^{k+1}(s,a))^2 = O\left(R^2T^{-1} + R^{5/2}m^{-1/4} + R^{3}m^{-1/2} + \beta^{-2}R\right).
    \]
\end{lemma}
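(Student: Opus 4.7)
My plan is to reduce the penalty-method minimizers $(\tilde V^{k+1}, \tilde h^{k+1}, \tilde x^{k+1})$ to the exact ALM quantities $\hat x^{k+1}$ via the two stated identities, and then combine these with Lemma~\ref{lem:conv1} through a triangle inequality. First I would exploit that $\tilde{\mathcal{L}}^k_{\beta,\mu}$ is quadratic in $x$ with $V, h$ fixed: the stationarity $\nabla_x \tilde{\mathcal{L}}^k_{\beta,\mu} = 0$ yields the closed form
\[
\tilde x^{k+1}(s,a) \;=\; \bigl(1 - \tfrac{1}{2\mu\beta}\bigr)\, Z_\mu(\tilde V^{k+1}, \tilde h^{k+1}, x_{\theta^k}, s, a).
\]
Substituting this back and completing the square (exactly as in the rearrangement in Lemma~\ref{lem:cvx}) shows that $(\tilde V^{k+1}, \tilde h^{k+1})$ minimize $\sum_s \rho_0(s) V(s) + \tfrac{1}{2\mu}(1 - \tfrac{1}{4\mu\beta}) \sum_{s,a} w(s,a)\, Z_\mu(V, h, x_{\theta^k}, s, a)^2$ over $h \geq 0, V$. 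Multiplying the objective by $\xi_1 = \tfrac{4\mu\beta}{4\mu\beta - 1}$ identifies this with the ALM subproblem $\min_{h \geq 0, V} \mathcal{L}_\mu(V, h, x_{\theta^k})$ for the primal LP \eqref{eq:rllp-primal-eq} whose objective $\rho_0$ has been replaced by $\xi_1 \rho_0$.

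Next I would apply Lemma~\ref{lem:lp} to this rescaled subproblem: $Z_\mu(\tilde V^{k+1}, \tilde h^{k+1}, x_{\theta^k}, \cdot)$ is the unique optimizer of the proximal dual \eqref{eq:prox-lp} with right-hand side $\xi_1 \rho_0$. Since the equality constraint is linear in its right-hand side, the candidate $\xi_1 \hat x^{k+1}$ is feasible, and one verifies directly that it satisfies the KKT conditions with dual certificate $\hat V^k$ carried over from the unscaled problem (the complementary active/inactive partition is inherited from $\hat x^{k+1}$). By strict concavity of the proximal dual, $Z_\mu(\tilde V^{k+1}, \tilde h^{k+1}, x_{\theta^k}, \cdot) = \xi_1 \hat x^{k+1}$, and plugging into the first-order identity for $\tilde x^{k+1}$ gives $\tilde x^{k+1} = (1 - \tfrac{1}{2\mu\beta})\xi_1 \hat x^{k+1} = \xi_2 \hat x^{k+1}$ after simplifying the constants. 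Then, by the triangle inequality
\[
E_{0,w}\!\left(x_{\theta^{k+1}} - \hat x^{k+1}\right)^2 \;\leq\; 2\, E_{0,w}\!\left(x_{\theta^{k+1}} - \tilde x^{k+1}\right)^2 \;+\; 2(\xi_2 - 1)^2\, E_{0,w}\!\left(\hat x^{k+1}\right)^2,
\]
Lemma~\ref{lem:conv1} bounds the first term by $O(R^2 T^{-1} + R^{5/2} m^{-1/4} + R^3 m^{-1/2})$. For the second, $(\xi_2 - 1)^2 = (4\mu\beta - 1)^{-2} = O(\beta^{-2})$, while $E_{0,w}(\hat x^{k+1})^2 = \xi_2^{-2} E_{0,w}(\tilde x^{k+1})^2 = O(R)$: by Assumption~\ref{assum:fc}(i) the minimizer $\tilde x^{k+1}$ corresponds to a choice of $x_\theta$ in $\mathcal{F}^x_{R_x, m_x}$, so the pointwise bound \eqref{eq:bv} together with the $O(1)$ $L^2$-norm of the random initialization $x_{\theta^0}$ (from the Gaussian concentration in \eqref{eq:dist}) yields $E_{0,w}(\tilde x^{k+1})^2 = O(R)$. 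Summing delivers the claimed rate.

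The delicate ingredient is the dual-certificate construction used to conclude $Z_\mu(\tilde V^{k+1}, \tilde h^{k+1}, x_{\theta^k}, \cdot) = \xi_1 \hat x^{k+1}$: one has to argue that the same $\hat V^k$ which certifies $\hat x^{k+1}$ for the unscaled proximal dual continues to certify $\xi_1 \hat x^{k+1}$ for the rescaled one, which reduces to consistency of a linear system on the active set $\{\hat x^{k+1}(s,a) > 0\}$. This consistency is automatic from the linearity of the constraint in $\rho_0$ and the complementarity already enjoyed by $(\hat V^k, \hat h^k, \hat x^{k+1})$. Everything after that identity is routine algebraic bookkeeping, so the scaling identities are the genuine mathematical content of the argument.
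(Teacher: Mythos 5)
Your proposal follows essentially the same route as the paper's proof: eliminate $x$ via the first-order condition $\tilde x^{k+1} = (1-\tfrac{1}{2\mu\beta})Z_\mu(\tilde V^{k+1},\tilde h^{k+1},x_{\theta^k},\cdot)$, observe that the reduced objective $\mathcal{M}(V,h)$ is (up to the factor $\xi_1$) the augmented Lagrangian of the LP with $\rho_0$ rescaled, deduce the two scaling identities by comparing the resulting optimality system with \eqref{eq:kkt-lg}, and conclude with the triangle inequality, $(1-\xi_2)^2=O(\beta^{-2})$, and Lemma \ref{lem:conv1}. The one inaccuracy is your claim that the \emph{same} dual certificate $\hat V^k$ certifies $\xi_1\hat x^{k+1}$ for the rescaled proximal problem: the stationarity relation $x = x_{\theta^k} + \mu(h + r + \gamma PV - V)$ with $x$ replaced by $\xi_1 x$ but $x_{\theta^k}$ and $r$ held fixed forces a \emph{different} dual pair (namely $(\tilde V^{k+1},\tilde h^{k+1})$), though this does not alter the structure of the argument, and the paper itself establishes the scaling by directly matching the two optimality systems rather than constructing the certificate.
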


\begin{proof}
    Let $x^*_{V,h} = \arg\min_x \tilde{\mathcal{L}}^k_{\beta,\mu}(V,h,x)$. The first order optimality condition gives 
    \[
        \frac{1}{2\mu}Z_\mu(V,h,x_{\theta^{k}},s,a)
        + \beta\left[x^*_{V,h}(s,a) - \left(Z_\mu(V,h,x_{\theta^{k}},s,a)\right)\right] = 0.
    \]
    It implies that 
    \be\label{eq:xvh}
    x^*_{V,h}(s,a) = (1-\frac{1}{2\mu\beta})Z_\mu(V,h,x_{\theta^{k}},s,a).
\ee
Define
    \[
    \mathcal{M}(V,h) := \tilde{\mathcal{L}}^k_{\beta,\mu}(V,h,x^*_{V,h}) 
    = \sum_{s} \rho_0(s)V(s) +\frac{1}{2\mu\xi_1}\sum_{s,a}w(s,a)[Z_\mu(V,h,x_{\theta^{k}},s,a)]^2.
    \]
    The optimality condition of  $\min_{h \geq 0, V} \mathcal{M}(V,h)$ is
\bee
    \begin{split}
        &\xi_1 \rho_0(s') - \sum_{s,a}w(s,a)Z_\mu(V^*,h^*,x',s,a)(\delta_{s'}(s) - \gamma P(s'|s,a)) = 0, \forall s' \\
    & \mu h(x,a) = \left[- x'(s,a) - \mu\left(r(s,a)+\sum_{s'} \gamma P(s'|s,a) V(s') - V(s)\right)\right]_+.
\end{split}
\eee
Together with \eqref{eq:kkt-lg}, we have 
\beaa
Z_\mu(\tilde{V}^{k+1}, \tilde{h}^k, x_{\theta^{k}}, s,a) &=& \xi_1 Z_\mu(\hat{V}^{k+1}, \hat{h}^{k+1},x_{\theta^k},  s,a). 
    \eeaa
    Combining with \eqref{eq:xvh}, we have
\[
    \tilde{x}^{k+1}(s,a) = x^*_{\tilde{V}^{k+1},\tilde{h}^{k+1}}(s,a) = (1-\frac{1}{2\mu\beta})Z_\mu(\tilde{V}^{k+1},\tilde{h}^{k+1},x_{\theta^{k}},s,a) = \xi_2 \hat{x}^{k+1}(s,a).
\]
According to Lemma \ref{lem:conv1}, we have
\beaa
   &&  E_{0,w}(x_{\theta^{k+1}}(s,a) - \hat{x}^{k+1}(s,a))^2 
\\
   &\leq &  2E_{0,w}(x_{\theta^{k+1}}(s,a) - \tilde{x}^{k+1}(s,a))^2 +  2E_{0,w}(\tilde{x}^{k+1}(s,a) - \hat{x}^{k+1}(s,a))^2 \\
   &= &  2E_{0,w}(x_{\theta^{k+1}}(s,a) - \tilde{x}^{k+1}(s,a))^2 +  2(1-\xi_2)^2E_{0,w}( \hat{x}^{k+1}(s,a))^2 \\
   & = & O\left(R^2T^{-1} + R^{5/2}m^{-1/4} + R^{3}m^{-1/2} + \beta^{-2}R\right)
.
\eeaa
This completes the proof of Lemma \ref{lem:err-2}.
\end{proof}

Let $\mathcal{S}$ be the subgradient operator of $-\sum_{s,a} r(s,a)w(s,a)x(s,a) + 1_\Omega(x)$ with respect to $x$, where
\bee
\Omega = \{x :  \sum_{s,a} (\delta_{s'}(s) - \gamma P(s'|s,a)) w(s,a)x(s,a) = \rho_0(s'), \forall s', 
x(s,a) \geq 0, \forall s,a\}.
\eee
Then $\mathcal{S}$  is a maximal monotone operator. Lemma \ref{lem:lp} means that 
\[
Z_\mu(\hat{V}^{k}, \hat{h}^k, x_{\theta^k}, s,a) = \mathcal{T}x_{\theta^k}:= (I + \mu \mathcal{S})^{-1}x_{\theta^k} ,
\]
where $\hat{V}^{k},\hat{h}^{k} = \arg\min_{h\geq 0,V}  \mathcal{L}_{\mu}(V,h, x_{\theta^k}) $.
We next give the main theorem. It states that the residual can be controlled if the parameters are chosen properly. 
\begin{theorem}
    Suppose that Assumption \ref{assum:fc} holds and $\beta > \frac{1}{4\mu}$. Let  $\zeta^{k+1} = (\phi^{k+1}, \psi^{k+1}, \theta^{k+1})$ be a sequence generated by \eqref{eq:pjg}. 
    Then we have that
    \bee
    \begin{split}
       & \frac{1}{K} \sum_{k=1}^K E_{0,w} \left({x}_{\theta^{k}}(s,a) - \mathcal{T}{x}_{\theta^{k}}(s,a)\right)^2 \\
        =&  O\left(R^{3/2}T^{-1/2} + R^{7/4}m^{-1/8} + R^{2}m^{-1/4} + \beta^{-1}R + K^{-1}R\right),
        \end{split}
    \eee
    and 
    \bee
\begin{split}
 &  E_{0,w}\left(x_{\theta^{K}}(s,a) - \mathcal{T}x_{\theta^{K}}(s,a)\right)^2  \\
= &O\left(KR^{3/2}T^{-1/2} + KR^{7/4}m^{-1/8} + KR^{2}m^{-1/4} + K\beta^{-1}R+ K^{-1}R \right).
\end{split}
\eee
\end{theorem}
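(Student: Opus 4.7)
The plan is to recognize iteration \eqref{eq:pjg} as an inexact proximal point method for the maximal monotone operator $\mathcal{S}$, with per-step error $e^k(s,a) := x_{\theta^{k+1}}(s,a) - \mathcal{T}x_{\theta^k}(s,a) = x_{\theta^{k+1}}(s,a) - \hat{x}^{k+1}(s,a)$. By Lemma \ref{lem:err-2}, $E_{0,w}(e^k(s,a))^2 = O(\epsilon)$ where $\epsilon := R^2T^{-1}+R^{5/2}m^{-1/4}+R^3m^{-1/2}+\beta^{-2}R$. I work throughout in the $w$-weighted geometry, i.e.\ with $\|u\|_w^2 := \sum_{s,a}w(s,a)u(s,a)^2$, under which $\mathcal{T}=(I+\mu\mathcal{S})^{-1}$ is firmly nonexpansive and any dual optimum $x^*$ is a fixed point of $\mathcal{T}$. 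Set $r^k := x_{\theta^k}-\mathcal{T}x_{\theta^k}$, so that the theorem bounds $E_{0,w}\|r^k\|_w^2$.

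For the averaged claim I would follow the classical perturbed-proximal-point descent. Firm nonexpansiveness gives $\|\mathcal{T}x_{\theta^k}-x^*\|_w^2 \le \|x_{\theta^k}-x^*\|_w^2 - \|r^k\|_w^2$, and plugging $x_{\theta^{k+1}}=\mathcal{T}x_{\theta^k}+e^k$ into $\|x_{\theta^{k+1}}-x^*\|_w^2$ yields
\begin{equation*}
\|x_{\theta^{k+1}}-x^*\|_w^2 \le \|x_{\theta^k}-x^*\|_w^2 - \|r^k\|_w^2 + 2\iprod{e^k}{\mathcal{T}x_{\theta^k}-x^*}_w + \|e^k\|_w^2.
\end{equation*}
Telescoping for $k=1,\dots,K$, taking $E_0$, applying Cauchy--Schwarz to the cross term, and using $\|x_{\theta^k}-x^*\|_w = O(\sqrt{R})$ on the projection set $\Xi$ gives $\frac{1}{K}\sum_k E_{0,w}\|r^k\|_w^2 = O(R/K + \sqrt{R\epsilon})$; substituting $\epsilon$ reproduces the first stated bound.

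For the last-iterate claim, the essential tool is the quasi-monotonicity $\|r^{k+1}\|_w \le \|r^k\|_w + \|e^k\|_w$. This follows from (i) nonexpansiveness of $I-\mathcal{T}$ (a consequence of firm nonexpansiveness of $\mathcal{T}$), which gives $\|r^{k+1}\|_w \le \|(I-\mathcal{T})\mathcal{T}x_{\theta^k}\|_w + \|e^k\|_w$, and (ii) nonexpansiveness of $\mathcal{T}$ applied to the pair $x_{\theta^k},\mathcal{T}x_{\theta^k}$, giving $\|(I-\mathcal{T})\mathcal{T}x_{\theta^k}\|_w = \|\mathcal{T}x_{\theta^k}-\mathcal{T}^2 x_{\theta^k}\|_w \le \|r^k\|_w$. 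Iterating yields $\|r^K\|_w \le \|r^j\|_w + \sum_{k=j}^{K-1}\|e^k\|_w$ for every $j\le K$; squaring, averaging over $j=1,\dots,K$, and taking expectation produces
\begin{equation*}
E_{0,w}\|r^K\|_w^2 \le \frac{2}{K}\sum_{j=1}^K E_{0,w}\|r^j\|_w^2 + \frac{2}{K}\sum_{j=1}^K E_{0,w}\left(\sum_{k=j}^{K-1}\|e^k\|_w\right)^2.
\end{equation*}
The first piece reproduces the $K^{-1}R$ decay and the ``main'' error terms via the averaged bound already established, while the second is handled by Cauchy--Schwarz together with $E_{0,w}\|e^k\|_w^2 = O(\epsilon)$ to place a single factor of $K$ in front of the error terms, matching the claimed rate.

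The hardest step will be the bookkeeping of the tail-sum $\frac{1}{K}\sum_j E_{0,w}(\sum_{k\ge j}\|e^k\|_w)^2$: a naive Cauchy--Schwarz loses an additional factor of $K$ and would worsen the bound to $K^2\epsilon$, so the correct rate needs careful pairing of the averaging weight $(K-j)$ with the per-step error estimate to absorb one power of $K$ while leaving the $K^{-1}R$ contraction piece from the averaged inequality untouched.
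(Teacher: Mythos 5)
Your treatment of the averaged bound is correct and essentially identical to the paper's: both arguments rest on Lemma \ref{lem:err-2} for the per-step error $e^k = x_{\theta^{k+1}} - \mathcal{T}x_{\theta^k}$, the firm nonexpansiveness of $\mathcal{T}=(I+\mu\mathcal{S})^{-1}$ (the paper cites \cite[Proposition 1]{rockafellar1976augmented} for exactly the inequality you use), a telescoping sum, and Cauchy--Schwarz plus the $O(\sqrt{R})$ a priori bound on the iterates to control the cross term. The resulting rate $O(R/K+\sqrt{R\epsilon})$ with $\epsilon = R^2T^{-1}+R^{5/2}m^{-1/4}+R^3m^{-1/2}+\beta^{-2}R$ matches the statement.

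The last-iterate part, however, has a genuine gap, and you have correctly diagnosed where it is: your plan sums the errors $\|e^k\|_w$ first and squares afterwards, so the term $\frac{1}{K}\sum_j E_{0,w}\bigl(\sum_{k\geq j}\|e^k\|_w\bigr)^2$ is of order $K^2\epsilon$, which is not dominated by the claimed $K\sqrt{R\epsilon}$ for general parameter regimes, and no amount of rearranging the averaging weights $(K-j)$ fixes a bound that is already lost at the squaring step. The way out --- and this is what the paper does, albeit tersely (``by the same arguments'') --- is to square \emph{before} telescoping: from your quasi-monotonicity $\|r^{k+1}\|_w \leq \|r^k\|_w + \|e^k\|_w$ together with the a priori bound $\|r^k\|_w = O(\sqrt{R})$ on $\Xi$, one gets the one-step increment bound
\begin{equation*}
E_{0,w}\|r^{k+1}\|_w^2 - E_{0,w}\|r^k\|_w^2 \;\leq\; E_{0,w}\bigl[(2\|r^k\|_w+\|e^k\|_w)\|e^k\|_w\bigr] \;=\; O\bigl(\sqrt{R\epsilon}\bigr).
\end{equation*}
Telescoping this from $j$ to $K$ gives $E_{0,w}\|r^K\|_w^2 \leq E_{0,w}\|r^j\|_w^2 + (K-j)\,O(\sqrt{R\epsilon})$, and averaging over $j=1,\dots,K$ yields $E_{0,w}\|r^K\|_w^2 \leq \frac{1}{K}\sum_j E_{0,w}\|r^j\|_w^2 + O(K\sqrt{R\epsilon})$, which combined with your averaged bound gives exactly the stated $O(K\sqrt{R\epsilon}+K^{-1}R)$. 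So the ingredients you assembled (quasi-monotonicity of the residual and boundedness on the projection set) are the right ones; the missing idea is only the order of operations --- square the one-step recursion using the $O(\sqrt{R})$ bound, then telescope, then average --- rather than any new lemma.
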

\begin{proof}
According to Lemmas \ref{lem:lp} and \ref{lem:err-2}, we have   
\bee
    E_{0,w}\left(x_{\theta^{k+1}}(s,a) - \mathcal{T}x_{\theta^{k}}(s,a)\right)^2 = O\left(R^2T^{-1} + R^{5/2}m^{-1/4} + R^{3}m^{-1/2} + \beta^{-2}R\right). 
\eee
Let $x^*$ be the solution of  the equation $x^* = \mathcal{T}x^*$, i.e., the optimal solution of the LP \eqref{eq:rllp-dual}. It follows from \cite[Proposition 1]{rockafellar1976augmented} that 
\bee
\begin{split}
& E_{0,w}\left(x^*(s,a) - \mathcal{T}x_{\theta^{k}}(s,a)\right)^2 + E_{0,w}\left(x_{\theta^{k}}(s,a) - \mathcal{T}x_{\theta^{k}}(s,a)\right)^2 \\
\leq & E_{0,w}\left(x_{\theta^{k}}(s,a) - x^*(s,a)\right)^2.
\end{split}
\eee
It implies
\bee
\begin{split}
&  E_{0,w}\left(x_{\theta^{k}}(s,a) - \mathcal{T}x_{\theta^{k}}(s,a)\right)^2 \\
\leq & E_{0,w}\left(x_{\theta^{k}}(s,a) - x^*(s,a)\right)^2 - E_{0,w}\left(x^*(s,a) - \mathcal{T}x_{\theta^{k}}(s,a)\right)^2 \\
= & E_{0,w}\left(x_{\theta^{k}}(s,a) - x^*(s,a)\right)^2  - E_{0,w}\left(x_{\theta^{k+1}}(s,a) - x^*(s,a)\right)^2\\
& +E_{0,w}\left(x_{\theta^{k+1}}(s,a) - x^*(s,a)\right)^2 - E_{0,w}\left(x^*(s,a) - Tx_{\theta^{k}}(s,a)\right)^2 \\
= & E_{0,w}\left(x_{\theta^{k}}(s,a) - x^*(s,a)\right)^2  - E_{0,w}\left(x_{\theta^{k+1}}(s,a) - x^*(s,a)\right)^2\\
& +E_{0,w}\left(x_{\theta^{k+1}}(s,a) - \mathcal{T}x_{\theta^{k}}(s,a)\right)\left(x_{\theta^{k+1}}(s,a) +  \mathcal{T}x_{\theta^{k}}(s,a) - 2x^*(s,a)\right) \\
\leq & E_{0,w}\left(x_{\theta^{k}}(s,a) - x^*(s,a)\right)^2  - E_{0,w}\left(x_{\theta^{k+1}}(s,a) - x^*(s,a)\right)^2\\
& + cR^{1/2} \cdot (E_{0,w}\left(x_{\theta^{k+1}}(s,a) - \mathcal{T}x_{\theta^{k}}(s,a)\right)^2)^{\frac{1}{2}}, 
\end{split}
\eee
where $c$ is a constant.
Taking summation over $k$, we obtain
\bee
\begin{split}
& \frac{1}{K} \sum_{k=1}^K E_{0,w}\left(x_{\theta^{k}}(s,a) - \mathcal{T}x_{\theta^{k}}(s,a)\right)^2 \\ 
\leq &  \frac{1}{K} E_{0,w}\left(x_{\theta^{1}}(s,a) - x^*(s,a)\right)^2 + cR^{1/2} \cdot \frac{1}{K} \sum_{k=1}^K  (E_{0,w}\left(x_{\theta^{k+1}}(s,a) - \mathcal{T}x_{\theta^{k}}(s,a)\right)^2)^{\frac{1}{2}} \\
 = & O\left(R^{3/2}T^{-1/2} + R^{7/4}m^{-1/8} + R^{2}m^{-1/4} + \beta^{-1}R + K^{-1}R\right).
 \end{split}
\eee
Furthermore, by the same arguments, we have
\bee
\begin{split}
& E_{0,w}\left(x_{\theta^{k+1}}(s,a) - \mathcal{T}x_{\theta^{k+1}}(s,a)\right)^2 -  E_{0,w}\left(x_{\theta^{k}}(s,a) - \mathcal{T}x_{\theta^{k}}(s,a)\right)^2 \\
= & O\left(R^{3/2}T^{-1/2} + R^{7/4}m^{-1/8} + R^{2}m^{-1/4} + \beta^{-1}R \right).
\end{split}
\eee
It implies that
\bee
\begin{split}
 & K \cdot E_{0,w}\left(x_{\theta^{K}}(s,a) - \mathcal{T}x_{\theta^{K}}(s,a)\right)^2  \\
\leq & \sum_{k=1}^K E_{0,w}\left(x_{\theta^{k}}(s,a) - \mathcal{T}x_{\theta^{k}}(s,a)\right)^2 \\
&+  \sum_{k=1}^K (K-k) \cdot O\left(R^{3/2}T^{-1/2} + R^{7/4}m^{-1/8} + R^{2}m^{-1/4} + \beta^{-1}R \right).
\end{split}
\eee
Hence, we have
\bee
\begin{split}
 &  E_{0,w}\left(x_{\theta^{K}}(s,a) - \mathcal{T}x_{\theta^{K}}(s,a)\right)^2  \\
\leq & \frac{1}{K}\sum_{k=1}^K E_{0,w}\left(x_{\theta^{k}}(s,a) - \mathcal{T}x_{\theta^{k}}(s,a)\right)^2 \\
& + K \cdot O\left(R^{3/2}T^{-1/2} + R^{7/4}m^{-1/8} + R^{2}m^{-1/4} + \beta^{-1}R \right) \\
= & O\left(KR^{3/2}T^{-1/2} + KR^{7/4}m^{-1/8} + KR^{2}m^{-1/4} + K\beta^{-1}R+ K^{-1}R \right),
\end{split}
\eee
which completes the proof.
\end{proof}

\section{Implementation Matters}\label{implementation matters}
In this section, we explain the structure of the neural networks and clarify some strategies in our implementation.
\subsection{Network Architecture}
We describe the network structures for the value network $V_\phi$, the slack network $h_{\psi}$ and the multiplier network $x_{\theta}$ in this subsection. By definition, the value network reads state information and outputs the corresponding predicted values, while both the multiplier network and slack network take the state as input and predict the values on all possible actions. 

For the environments with continuous action spaces, a 3-layer fully connected network of 64 units using $\tanh$ activator is considered for function approximation in value network $V_{\psi}$. The multiplier network $x_{\theta}$ starts from a 2-layer MLP with 64 units and $\tanh$ activator, then splits into two heads. The first head generates a positive state value, i.e., $x_{\theta,1}(s)$, after a single fully connected layer and the second head similarly predicts a mean vector $x_{\theta,2}(s)$ which has the same dimension as action space. The Gaussian policy is commonly adopted, however in practice, the actions are bounded in a finite interval, i.e., $\mathcal{A}\subseteq[-\bar{a},\bar{a}]^D$ and $D$ is the dimension of the action space. Therefore, we apply $\tanh$ function to the Gaussian samples $u\sim\mathcal{N}(x_{\theta,2}(s),\sigma^2)$, i.e., $a =\bar{a} \tanh(u)$, and derive the log-likelihood by employing the change of
variables formula for the bounded actions. The multiplier is composed as 
\be
x_{\theta}(s,a) =x_{\theta,1}(s)\exp(\log f(u|s) - 1^T\log(1-\tanh^2(u))),
\ee
where $f(u|s)$ is the Gaussian density function and $\sigma>0$ is a hyper-parameter. As $h_{\psi}(s,a)\geq0,\forall s,a$, we take a fully connected slack network to map the state $s$ into a vector $h_{\psi}(s)$ with the same dimension as actions. The slack function is 
\be
h_{\psi}(s,a) =C(1-\exp(-\|a-h_{\psi}(s)\|^2)).
\ee
where $C>0$ is a hyper-parameter.

In discrete cases, e.g. Acrobot, CartPole, the difference from the continuous environments mainly locates in the output layer of the second head in the multiplier network, as well as in the slack network. The head directly maps the hidden variable into an action distribution by a $softmax$ activator. In other words, $x_{\theta,2}(s)$ is a vector which has the same dimension as the action space and 
\be
x_{\theta}(s,a) = x_{\theta,1}(s)x_{\theta,2}(s)^T1_a.
\ee
where $1_a$ is an indicator vector. Similarly, the slack network predicts
 \be
h_{\psi}(s,a)=C h_{\psi}(s)^T1_a.
\ee
 The network structure of the multiplier function $x_{\theta}$ is illustrated in Fig \ref{fig:netstruct}. Additionally, if the states are not presented as a vector in the Euclidean space, but video frames for example, then the fully connected layers are concatenated with multiple convolutional layers in advance to embed the states. 
\begin{figure}[h]
    \centering
    \setlength{\belowcaptionskip}{10pt}

    \tikzstyle{cir} = [circle, minimum width = 0.7cm, minimum height = 0.7cm, text centered, draw = black]
    \tikzstyle{rec} = [rectangle, minimum width =1.2cm, minimum height = 1.2cm, text centered, draw = black] 
    \tikzstyle{rec1} = [rectangle,  minimum width = 2.75cm, minimum height = 2.5cm, text centered, draw = black] 
    \tikzstyle{arrow} = [thick, ->, draw = black]
  
    \begin{tikzpicture}[node distance = 1cm]
    \node(input){state $s$};
        \node(FC)[rec1, right of = input,xshift = 2.2cm] {fully connected layers  or CNN};
        \node(hid1)[rec, right of = FC, xshift = 3.8cm,yshift = 1.2cm] {fully connected layers};
              \node(hid2)[rec, right of = FC, xshift = 3.8cm,yshift = -1.2cm] {fully connected layers};
        \node(x1)[ right of = hid1, xshift = 1.8cm] {$x_{\theta,1}(s)$};
              \node(x2)[right of =hid2, xshift = 1.8cm] {$x_{\theta,2}(s)$};

      
              \draw[arrow] (input)--(FC);
         \draw[arrow] (FC)--(hid1);
        \draw[arrow] (FC)--(hid2);
         \draw[arrow] (hid1)--(x1);
          \draw[arrow] (hid2)--(x2);

         \end{tikzpicture}
             \caption{The multiplier network $x_{\theta}$ with two heads.}
         \label{fig:netstruct}
\end{figure}
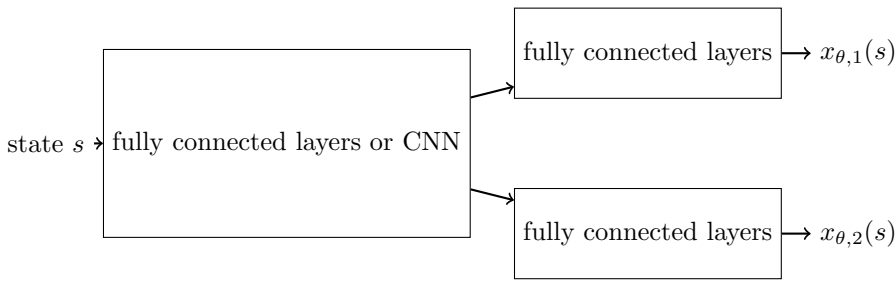

\subsection{Experimental Strategies}For practical computation considerations, we also include several helpful and widely used strategies as follows.
\begin{itemize}
\item \textbf{Proportional Sampling.} In the above discussion, we propose a stochastic gradient-based algorithm with a series of approximations to resolve the constrained LP problem. To enhance the control of the constraints in the dual LP, we are supposed to encourage the points that violate the constraints. In other words, when getting batch data from the replay buffer $\mathcal{D}$, the transition tuple $(s,a,r,s')$ is sampled proportional to the empirical constraint violation, i.e., $[r+\gamma V_{\phi^{\mathrm{targ}}}(s')-V_{\phi}(s)]_+$ where $\phi$ is the current parameter of the value network, other than uniformly sampling in general case.

\item \textbf{Adam Learning Rate Annealing.} Based on Adam's \cite{kingma2014adam} excellent performance in many stochastic optimization problems, we adopt it as our method for all networks. The learning rate of Adam is annealed as the iteration going on.

\item \textbf{Local Gradient Clipping.}  After each gradient computation, we clip each component of the gradient (i.e., the gradient of each layer) such that the local gradient norm does not exceed one. DQN implemented in OpenAI Baselines \cite{baselines} also executes such operation. 

\end{itemize}

\section{Experiments}\label{Numerical Experiments}
To get a better understanding of the stability and efficiency of our composite augmented Lagrangian algorithm, we make a comprehensive numerical experiments including a series of explorations on algorithmic components and comparisons with some state-of-the-art algorithms. As the related works \cite{dai2017boosting,cho2017deep,wang2017randomized} do not provide open-source code, we are not able to compare with them fairly. However, according to their reported results, the performance of our method is competitive with them. 

The interaction environments are provided from OpenAI Gym benchmark \cite{brockman2016openai}. As illustrated in \cite{henderson2017deep}, the implementation of the RL algorithms, as well as the preprocessing of the environments, have a large affect on numerical performance. For fairness, our method is implemented in OpenAI Baselines \cite{baselines} toolkit which provides sufficient wrappers for all type of environments. For all figures below, the solid line reports the average values and the shade area corresponds to the standard derivation over five random seeds. Besides, the horizontal axis represents the number of the interactions with the environment.

\subsection{Ablation Study}
To investigate the algorithmic components and the scalability of SCAL, we perform ablation study in several control problems by detecting two type of gradients of the weighted augmented Lagrangian function and varying the number of rollout steps.

\begin{itemize}

\item  \textbf{The Effect of Gradient Estimation.} To understand the capability of the proposed method in section \ref{A Strategy to Overcome Double-Sampling} for the double sampling obstacle, we compare the un-bias gradient estimation of $\mathcal{L}_{\mu,w}(\phi,\theta^k)$, i.e.,
\bea\notag
g_1 = \nabla_{\phi}V_{\phi^k}(s_{0})+x_{\theta^k}(s,a)\left(\gamma \nabla_{\phi}V_{\phi^k}(s')- \nabla_{\phi}V_{\phi^k}(s)\right),
\eea
 with a simple gradient approximation, i.e., 
\bea\notag
g_2 =\nabla_{\phi}V_{\phi^k}(s_{0})+z_\mu(\phi^k,\psi^k,\theta^k,s,a,s')\left(\gamma \nabla_{\phi}V_{\phi^k}(s')- \nabla_{\phi}V_{\phi^k}(s)\right),
\eea
in which the expectation $Z_\mu(\phi^k,\psi^k,\theta^k,s,a)$ is replaced by single transition sample. We call the corresponding gradient as $\textbf{un-bias}$ and $\textbf{bias}$ version, respectively. We investigate the variance of these two different gradients in a classic control task Acrobot in Fig \ref{fig:if-biasgrad-det}. Notably, the default transition dynamic in Acrobot itself is deterministic. In this case, $g_2$ is exactly an un-bias gradient estimation of $\mathcal{L}_{c,\mu}(\phi,\theta^k)$, and the left picture shows that the variance curves of these two gradient estimations are similar. Furthermore, we add a small white noise disturbance to the transition probability and the variance of two gradients are shown in the right picture. In this time, a significant variance reduction illustrates that our algorithm works well in solving the double-sampling obstacle. Moreover, for the environments in which state space is large or/and the transition kernel is stochastic, the $\textbf{bias}$ version may also induce a horrible deviation to mislead the optimization.
\begin{figure}[tbhp]
\centering
\includegraphics[scale = 0.36]{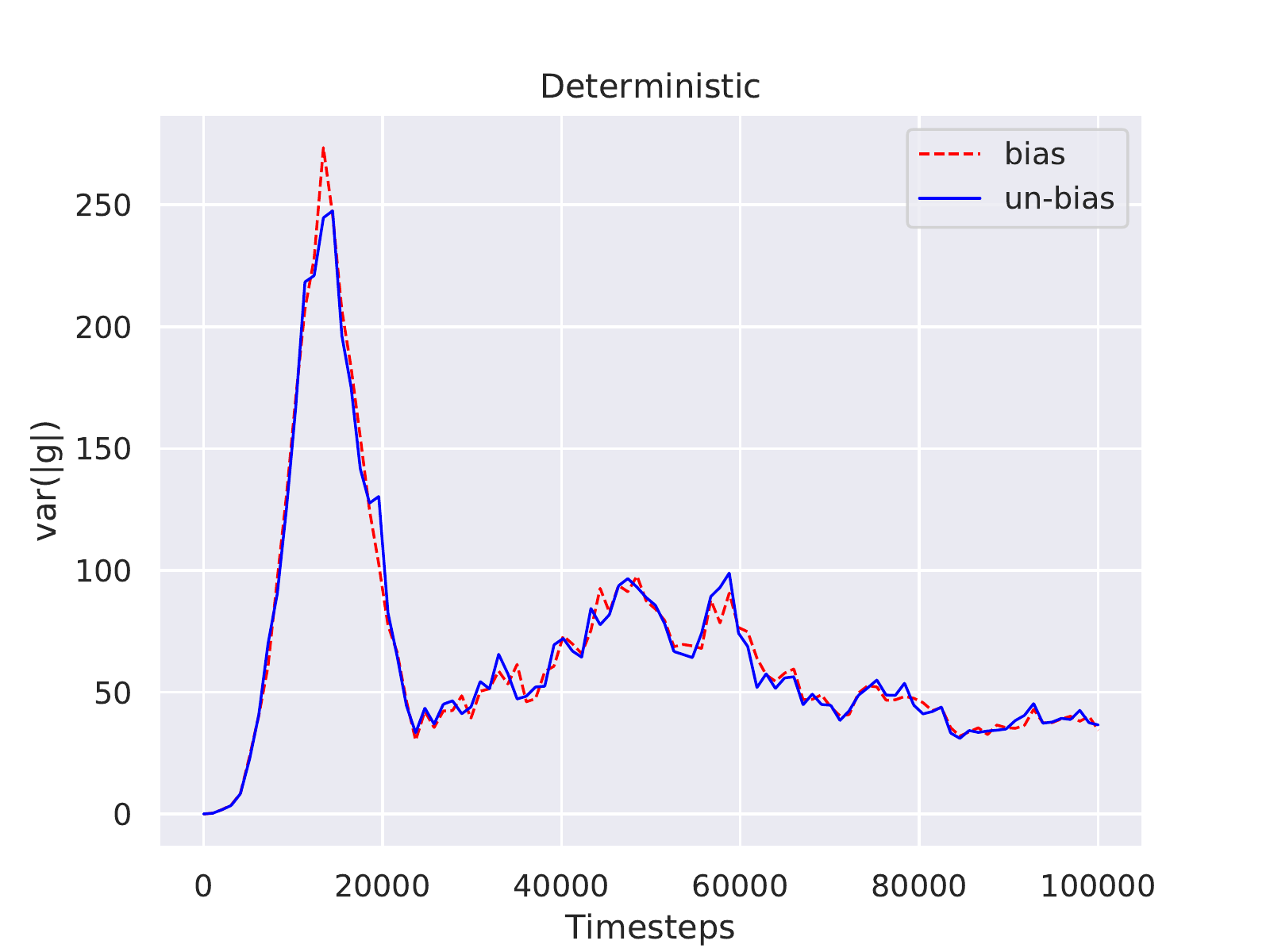}
\includegraphics[scale = 0.36]{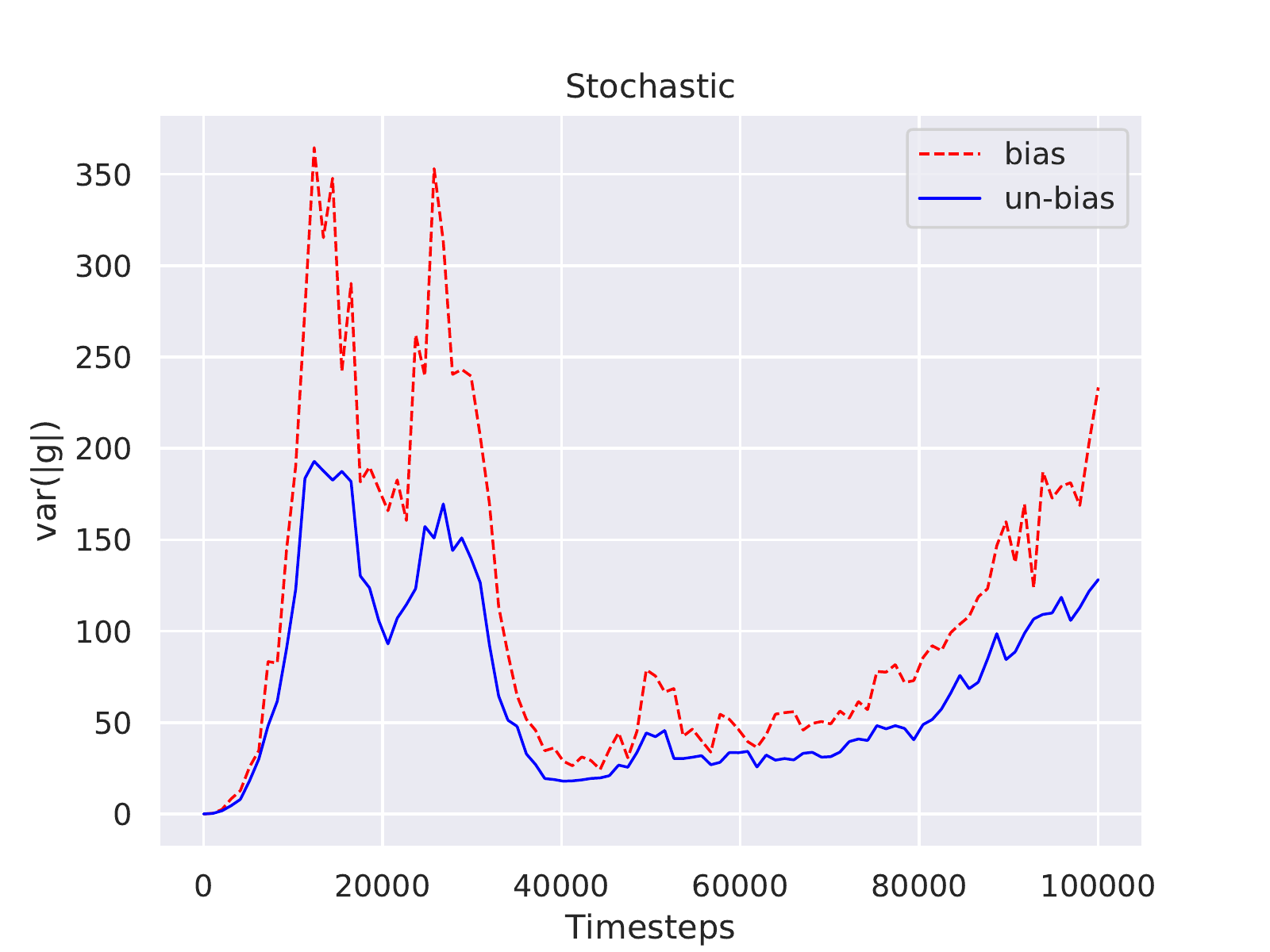}
\caption{Two different gradient estimations in deterministic and stochastic dynamics.}
\label{fig:if-biasgrad-det}
\end{figure}

%
\item  \textbf{The Effect of Multi-steps.} In temporal-difference algorithms and cases where the reward function is highly sparse, multi-step setting is commonly utilized for better bias-variance trade off \cite{kearns2000bias,sutton1998reinforcement} and reward condensing, respectively. The optimal Bellman equation in \eqref{optimal-bellman} is the one-step case and it can be extended into multi-step version, as well as the corresponding LP formulations \eqref{eq:rllp-primal} and \eqref{eq:rllp-dual}. Therefore, SCAL can be applied to the multi-step setting by modifying the one-step reward $r_i$ in transition tuple $(s_i,a_i,r_i,s'_i)$ into
\be
r_i = \sum_{j=0}^{l-1}\gamma^{j}r(s_{i+j},a_{i+j}),
\ee
where $l$ is the length of looking ahead, and $s'_i$ is the state after $l$ steps transition from $s_i$. The parameter $l$ determines how far into the future the algorithm can reach. The bias of the Bellman residual is reduced as the lookahead length $l$ increases, while the variance is also enlarged unexpectedly. We test different values of $l$ in two robotic locomotion control problems, HalfCheetah and Ant, to investigate the influence on the performance. We plot the results in Fig \ref{fig:multistep}. It reveals that slightly increasing the lookahead length is quite efficient in promoting the performance, while looking too far ahead induces unexpected variance which may suppresses the learning of the algorithm. 
\begin{figure}[tbhp]
\centering
\includegraphics[scale = 0.36]{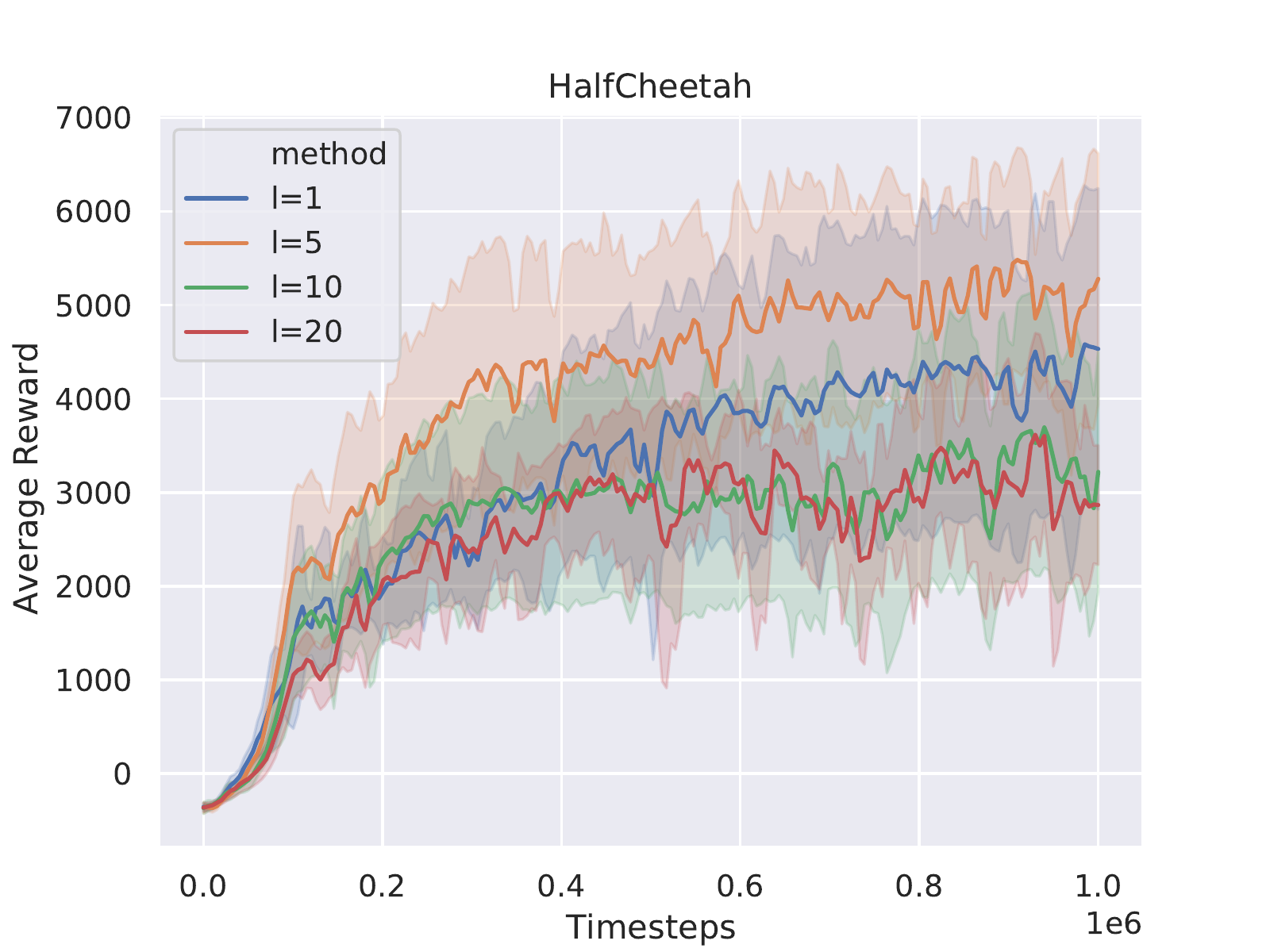}
\includegraphics[scale = 0.36]{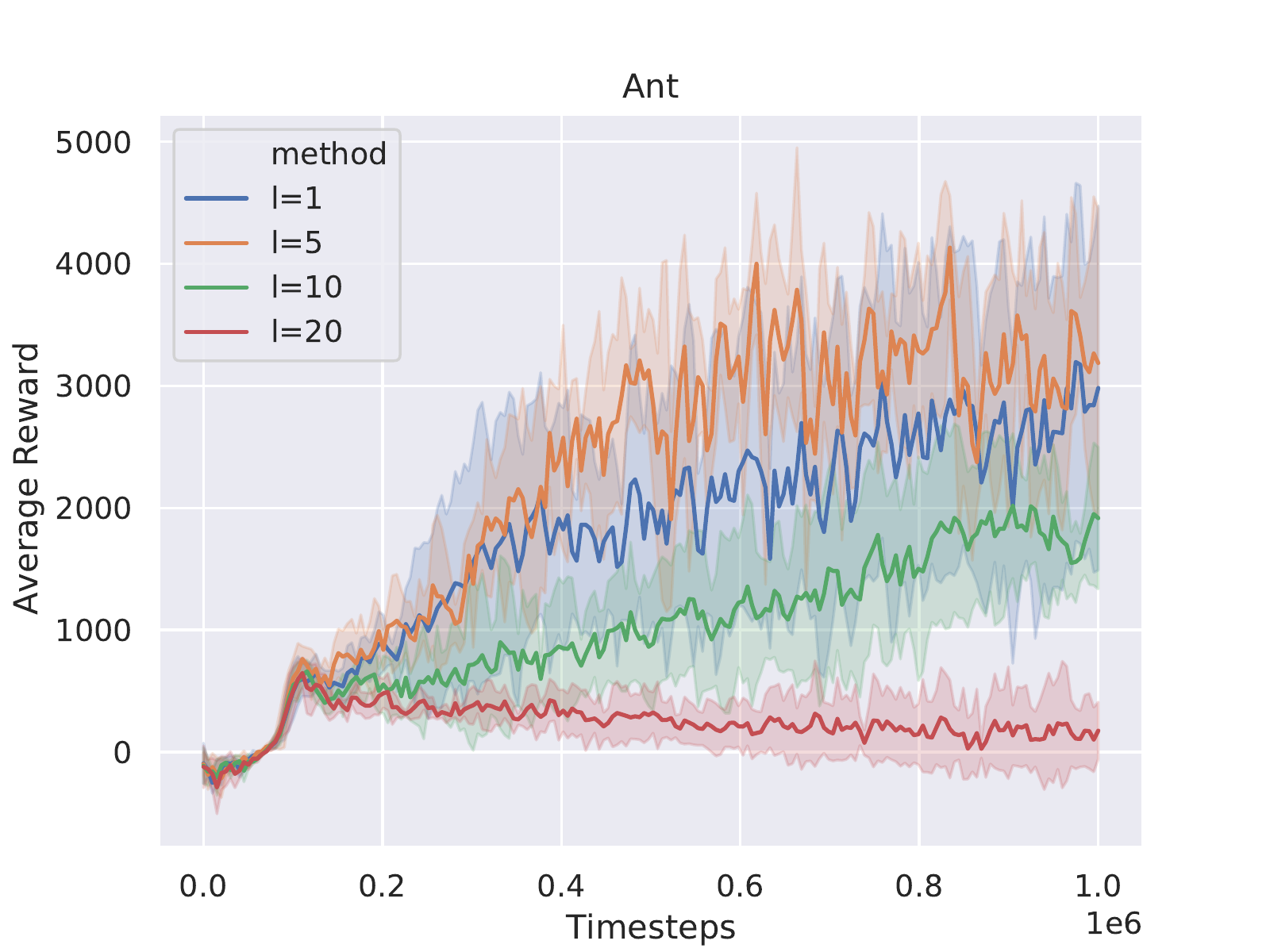}
\caption{The comparison among different lookahead length.}
\label{fig:multistep}
\end{figure}


\end{itemize}

\subsection{Inventory Control}
We consider the problem of day-to-day control of an inventory of a fixed maximum size $M$ in the face of uncertain demand\footnote{Example 1 in https://sites.ualberta.ca/~szepesva/papers/RLAlgsInMDPs.pdf}. It is a single agent domain featuring discrete state and action space. In the evening of day $t$, the agent must decide about the quantity $a_t\in\{0,1,2,...,M\}$ to be ordered for the next day $t+1$, in which $s_t\in\{0,1,2,...,M\}$ is the current size of the inventory. During the day $t+1$, some stochastic demand $d_{t+1}$ arrives, and the next inventory is 
\be
s_{t+1} = [\min((s_t+a_t),M) - d_{t+1}]_+,
\ee
in which $\{d_t\}$ is a sequence of independent and identically distributed (i.i.d.) integer-valued random variables.
The revenue on day $t+1$ is defined as 
\be
r_{t+1} = -K \cdot1_{a_t>0}-c\cdot [\min((s_t+a_t),M) - s_{t}]_+ - h\cdot s_t + p\cdot[\min((s_t+a_t),M) - s_{t+1}]_+,
\ee
where $K>0$ is a fixed entry cost, $c>0$ is the unit cost of purchasing, $h>0$ is the unit cost of holding an inventory and $p>h$ is the unit cost of selling.

In our experiments, we consider the setting as follows: $M=100$, $K=5$, $c=2$, $h=2$, $p=3$ and the Poisson distribution $\mathcal{P}(8)$ is considered for the demand variables. The lines in Fig \ref{fig:inventorycontrol} illustrate the average cumulative return over the last simulated 10 days. The result shows that SCAL reaches a stable solution with quite less interactions than other two state-of-the-art RL algorithms, TRPO \cite{schulman2015trust} and PPO \cite{schulman2017proximal}.
\begin{figure}[tbhp]
\centering
\includegraphics[scale = 0.40]{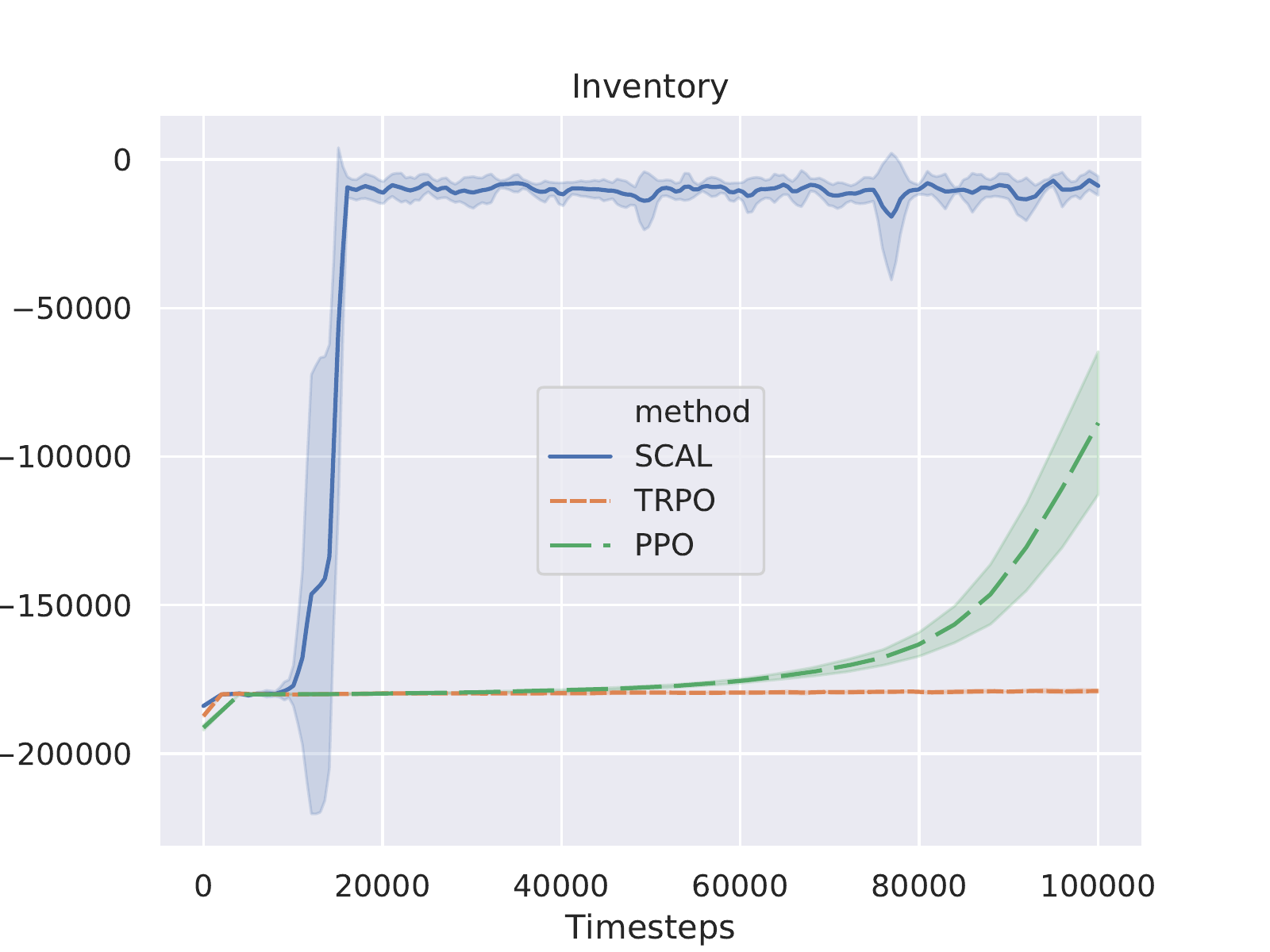}
\caption{The Inventory Control.}
\label{fig:inventorycontrol}
\end{figure}

\subsection{Comparisons in Discrete Environments}
To demonstrate the learning ability, we compare SCAL with several popular RL algorithms, including DQN, two policy gradient methods TRPO and PPO on two discrete control problems. We plot the results in Fig \ref{fig:comparisons-discrete}. With the same number of interactions, our algorithm achieves significant better performance in learning capability and stability. Note that TRPO and PPO are only applicable in on-policy setting which has a poor sample efficiency, while both our algorithm and DQN are performed in an off-policy manner.
\begin{figure}[tbhp]
\includegraphics[scale = 0.36]{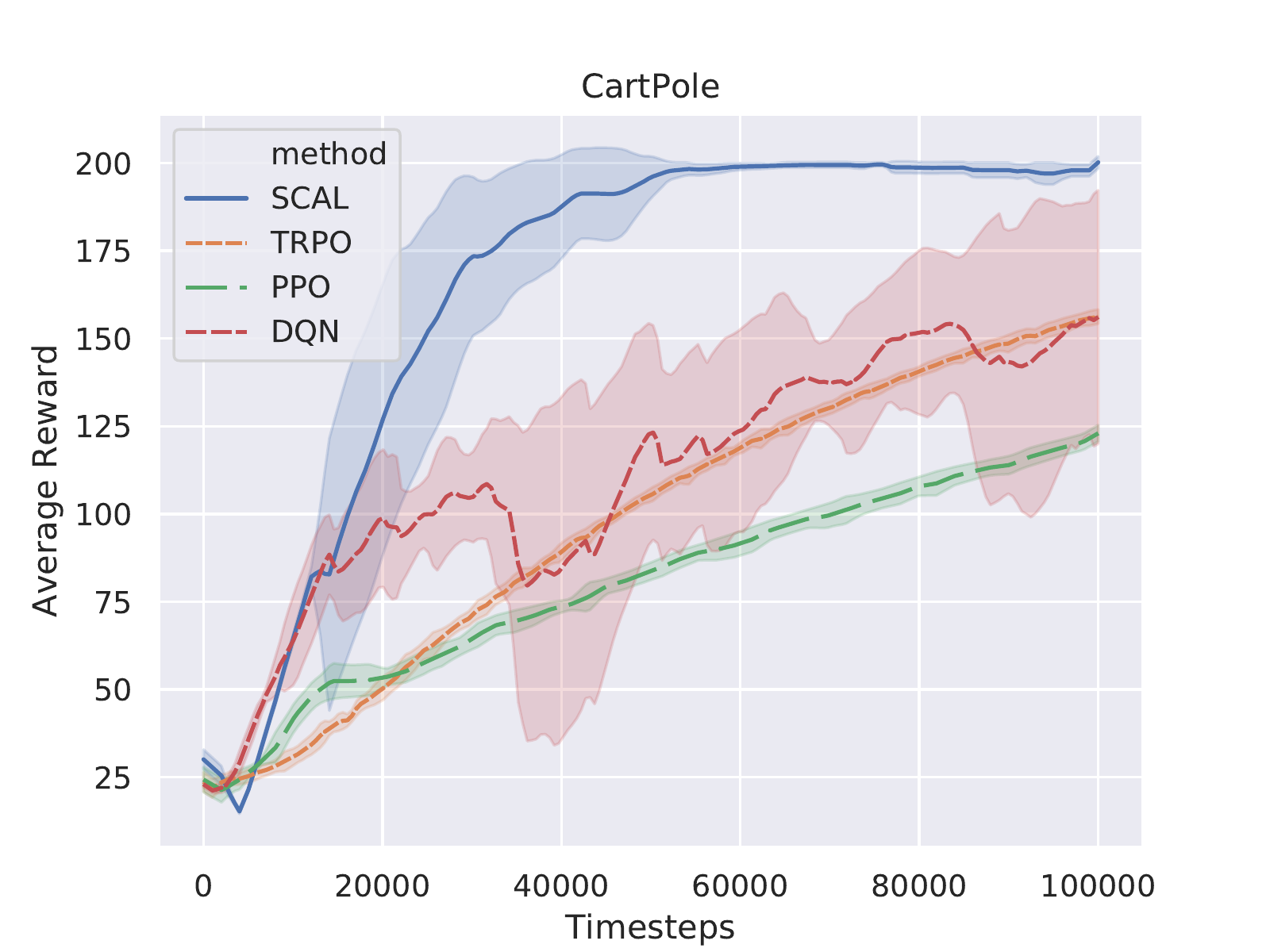}
\includegraphics[scale = 0.36]{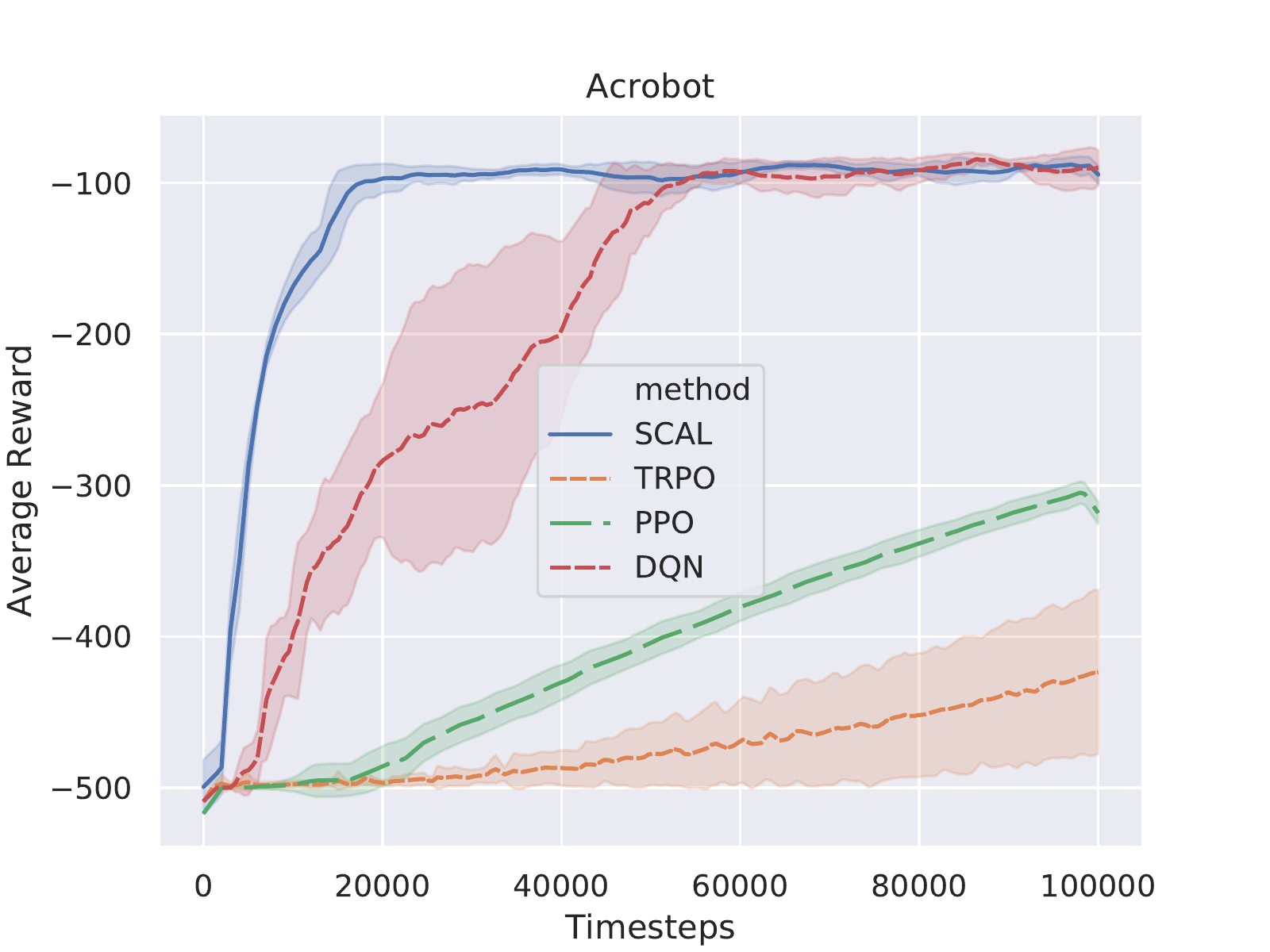}
\caption{Comparisons among discrete environments.}
\label{fig:comparisons-discrete}
\end{figure}

\subsection{Comparisons in Continuous Control Problems}
 As DQN is unavailable in continuous environments, we just compare SCAL with TRPO and PPO on several control tasks provided from OpenAI Gym benchmark \cite{brockman2016openai} using the MuJoCo simulator \cite{todorov2012mujoco}. The different dynamic properties and dimensions of these environments provide meaningful benchmarks to make a comparison for the algorithms. In Fig \ref{fig:comparisons}, our method attains higher average reward with the same number of interactions than the other two state-of-the-art algorithms. 
\begin{figure}[tbhp]
\includegraphics[scale = 0.36]{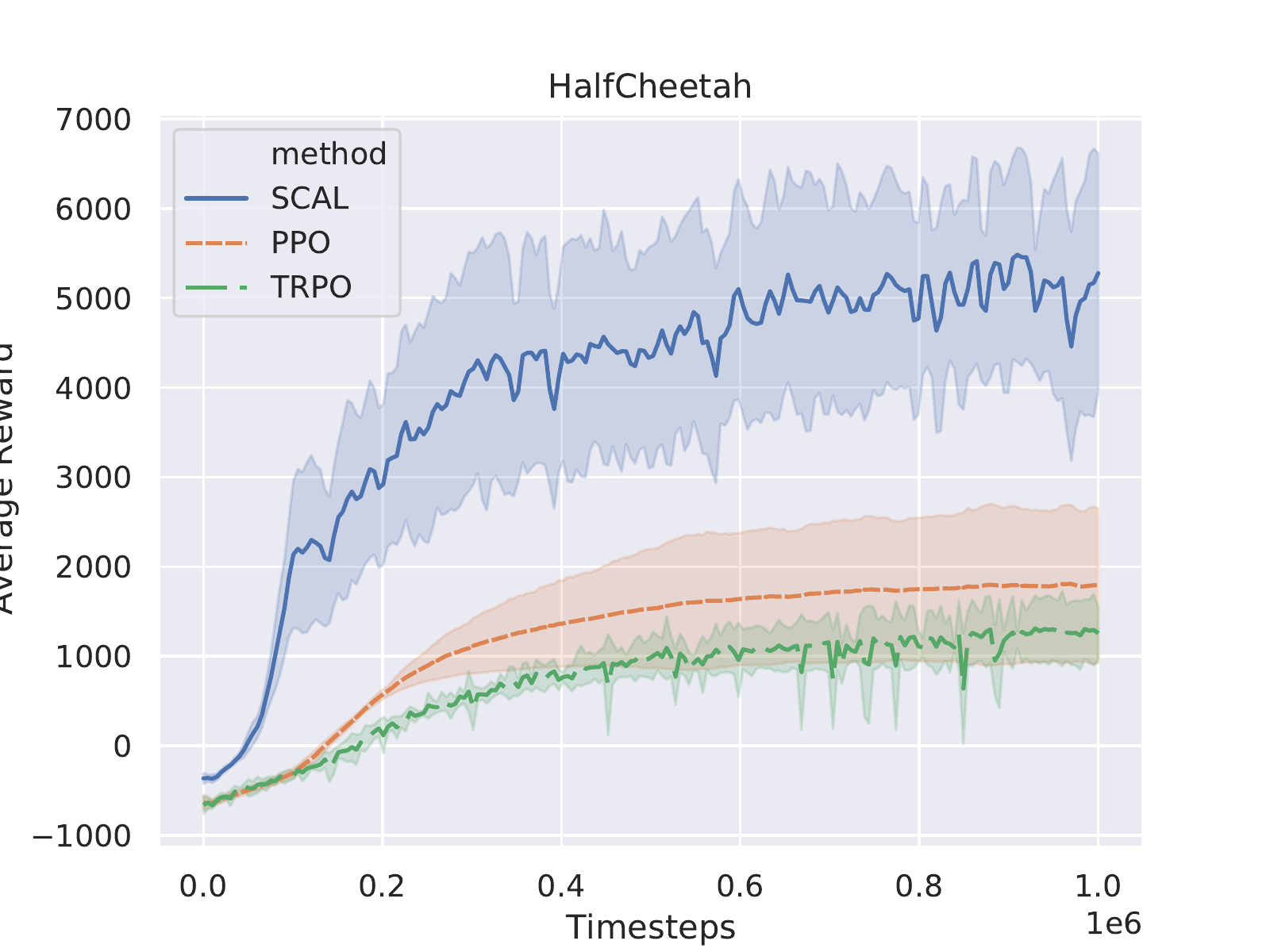}
\includegraphics[scale = 0.36]{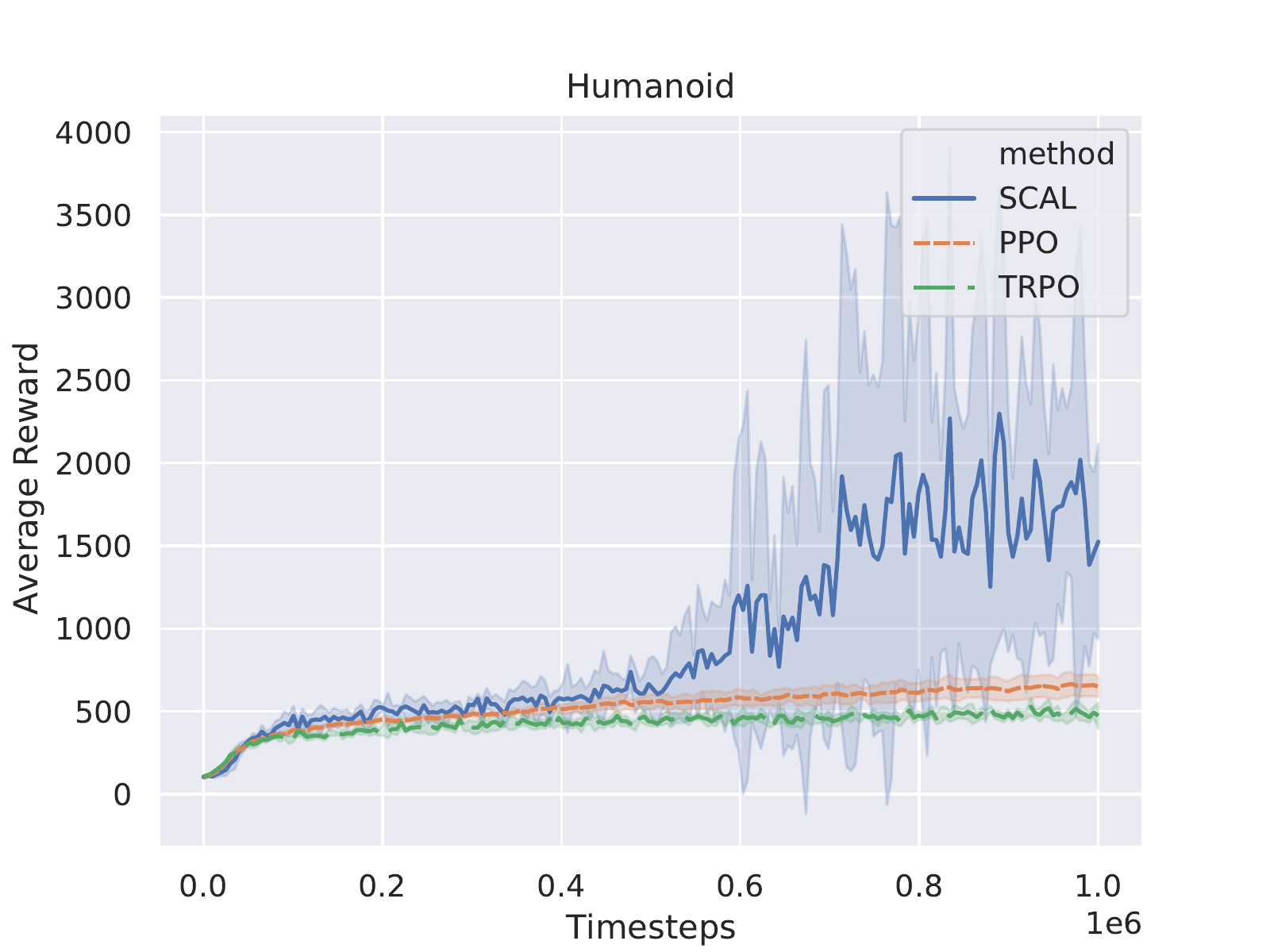}\\
\includegraphics[scale = 0.36]{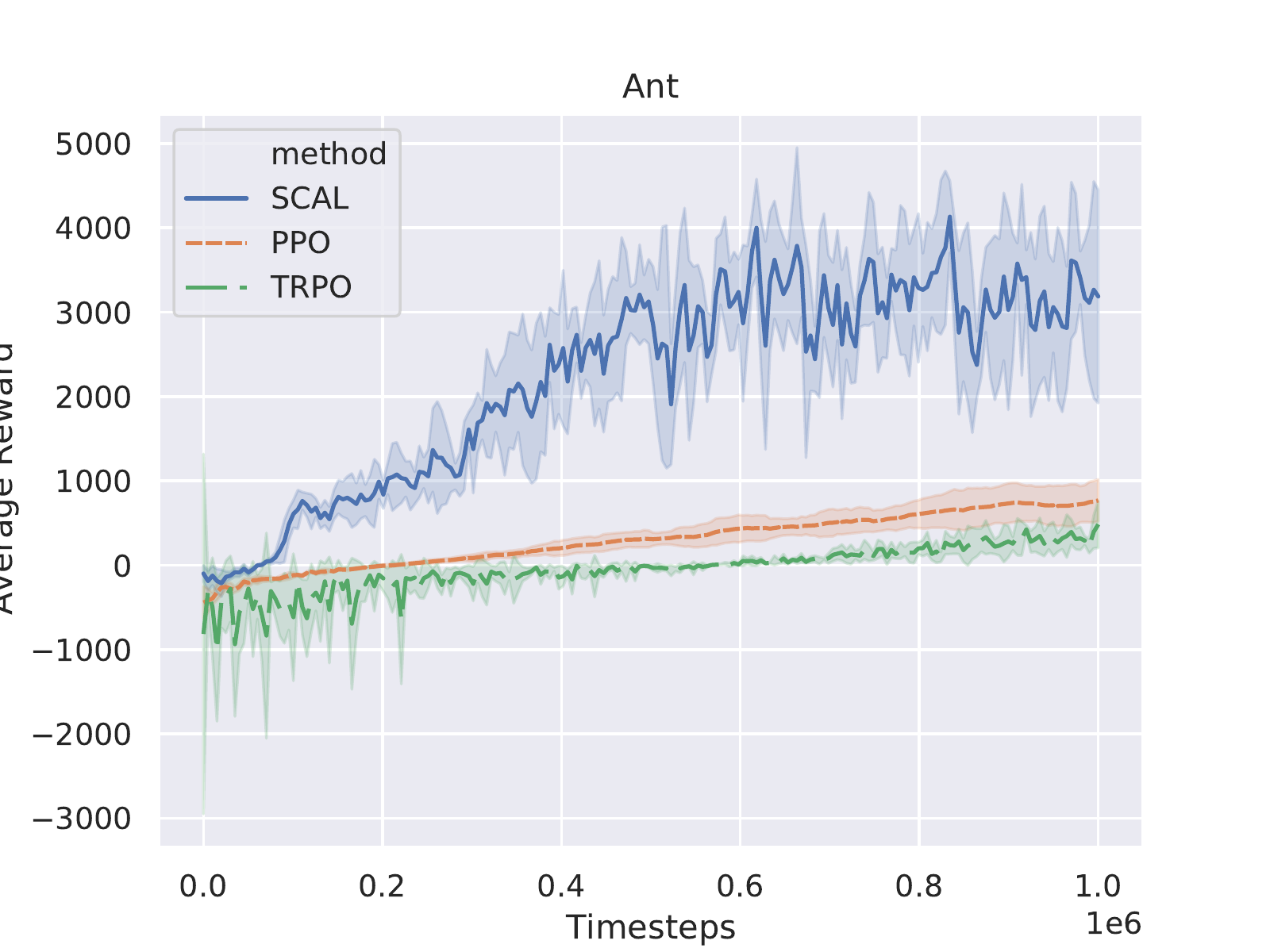}
\includegraphics[scale = 0.36]{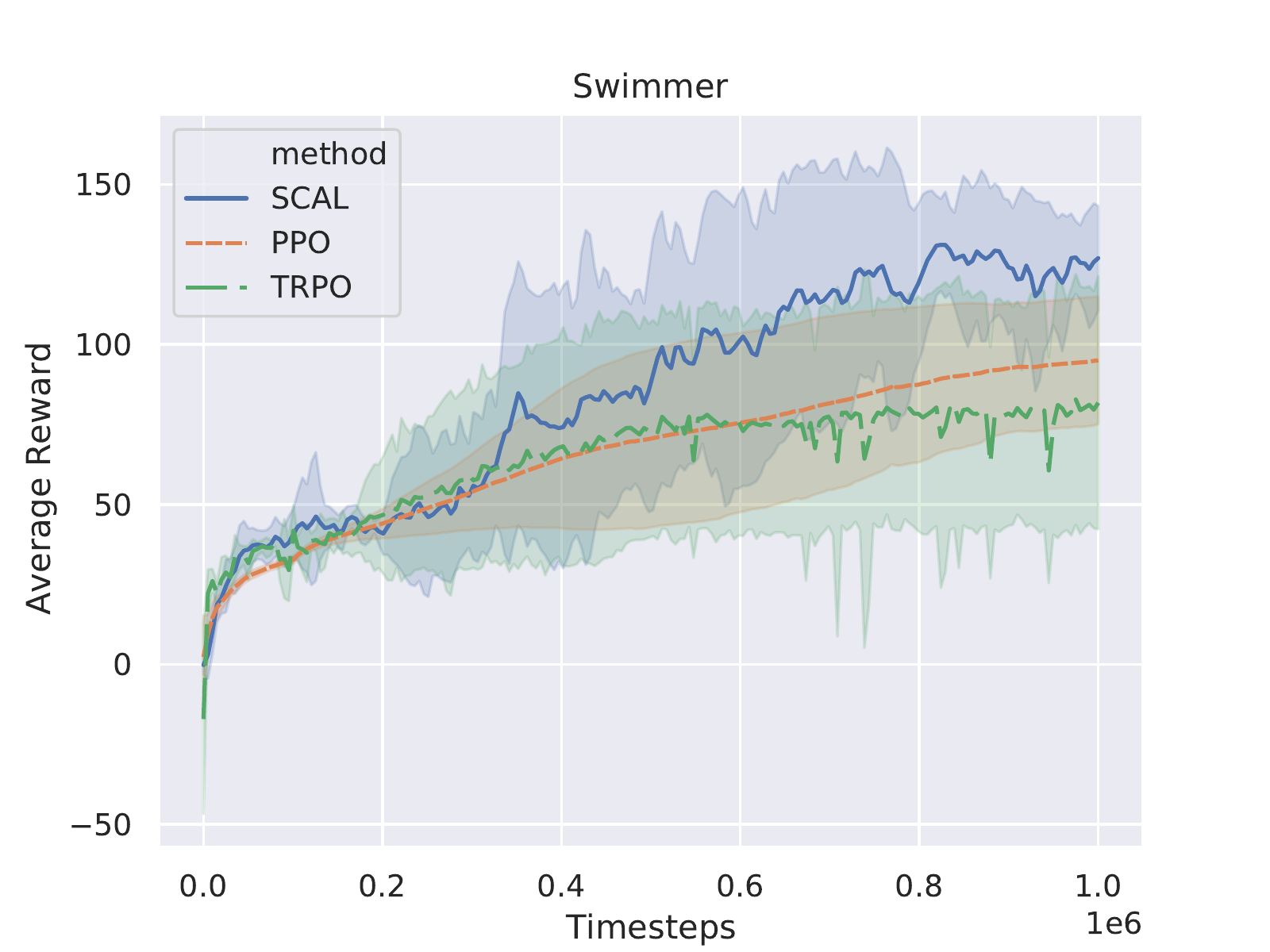}
\caption{Comparisons among state-of-the-art algorithms.}
\label{fig:comparisons}
\end{figure}

\section{Conclusion}
With the help of the algorithmic property of ALM, we overcome the difficulties in minimizing the weighted augmented Lagrangian function by taking advantage of the multipliers. Our solution provides an opportunity to incorporate the subproblems in ALM into a single constrained problem. To develop a practical optimization method, we penalize the constraints to formulate a quadratic penalty problem and employ semi-gradient for the value function, so that a particular stochastic gradient optimization is easily to be performed for the objective function. The ablation study shows that the double-sampling solution significantly reduces the variance of the gradient estimation in environments with stochastic transition probability, and the algorithm can be easily extended into multi-step version with better performance. The comparisons with other state-of-the-art RL algorithms also demonstrate that our method has competitive learning ability in multiple environments.


%
%


\section*{Acknowledgement}
The computational results were obtained at GPUs supported by the National Engineering Laboratory for Big Data Analysis and Applications and the High-performance Computing Platform of Peking University.

\bibliographystyle{siamplain}
\bibliography{article.bbl}
\newpage

\end{document}